\newtheorem{theorem}{Theorem}
\newtheorem{remark}{Remark}
\newtheorem{lemma}{Lemma}
\newtheorem{corollary}{Corollary}
\newtheorem{proposition}{Proposition}
\newtheorem{proof}{Proof}
\newcommand{\paren}[1]{\left(#1\right)}
\DeclareMathOperator{\tr}{tr}
\newcommand{\e}{\mathrm{e}}
\newcommand{\E}{\mathrm{e}}
\newcommand{\D}{\mathrm{d}}
\newcommand{\diff}{\mathrm{d}}
\newcommand{\Real}{\mathbb{R}}
\newcommand{\bb}{\mathbb}
\def\qed{\hfill $\Box$}
\begin{document}
%
% paper title
% Titles are generally capitalized except for words such as a, an, and, as,
% at, but, by, for, in, nor, of, on, or, the, to and up, which are usually
% not capitalized unless they are the first or last word of the title.
% Linebreaks \\ can be used within to get better formatting as desired.
% Do not put math or special symbols in the title.
\title{Uniqueness Analysis of Controllability Scores and\\ Their Application to Brain Networks}
% author names and IEEE memberships
% note positions of commas and nonbreaking spaces ( ~ ) LaTeX will not break
% a structure at a ~ so this keeps an author's name from being broken across
% two lines.
% use \thanks{} to gain access to the first footnote area
% a separate \thanks must be used for each paragraph as LaTeX2e's \thanks
% was not built to handle multiple paragraphs
\author{Kazuhiro Sato and Ryohei Kawamura\thanks{K. Sato and R. Kawamura are with the Department of Mathematical Informatics, Graduate School of Information Science and Technology, The University of Tokyo, Tokyo 113-8656, Japan, email: kazuhiro@mist.i.u-tokyo.ac.jp (K. Sato), 04ryohei@gmail.com (R. Kawamura) }}
\maketitle
\thispagestyle{empty}
\pagestyle{empty}

% As a general rule, do not put math, special symbols or citations
% in the abstract or keywords.
\begin{abstract}
Assessing centrality in network systems is critical for understanding node importance and guiding decision-making processes. In dynamic networks, incorporating a controllability perspective is essential for identifying key nodes. In this paper, we study two control theoretic centrality measures---the Volumetric Controllability Score (VCS) and Average Energy Controllability Score (AECS)---to quantify node importance
in linear time-invariant network systems. We prove the uniqueness of VCS and AECS for almost all specified terminal times, thereby enhancing their applicability beyond previously recognized cases. This ensures their interpretability, comparability, and reproducibility.
Our analysis reveals substantial differences between VCS and AECS in linear systems with symmetric and skew-symmetric transition matrices. 
We also investigate the dependence of VCS and AECS on the terminal time and prove that when this parameter is extremely small, both scores become essentially uniform.
Additionally, we prove that a sequence generated by a projected gradient method for computing VCS and AECS converges linearly to both measures under several assumptions.
 Finally, evaluations on brain networks modeled via Laplacian dynamics using real data reveal contrasting evaluation tendencies and correlations for VCS and AECS, with AECS favoring brain regions associated with cognitive and motor functions, while VCS emphasizes sensory and emotional regions.
\end{abstract}

\begin{IEEEkeywords}
Brain networks, centrality, controllability scores, linear network systems
\end{IEEEkeywords}

\IEEEpeerreviewmaketitle

\section{Introduction} \label{sec:intro}
Assessing centrality in network systems is crucial for understanding the relative importance of nodes and identifying key influencers, informing decision-making in various contexts such as social networks, infrastructure systems, and brain networks \cite{cimpeanu2023social, fang2016resilience, faramondi2020multi, kong2022influence, sporns2010networks, van2013network}. 
The choice of centrality measure varies depending on the type of network system and the specific aspects under investigation \cite{bloch2023centrality, rodrigues2019network}.
However, static measures alone cannot capture how interventions propagate over time.
In dynamic network systems, incorporating a controllability perspective is essential \cite{kalman1960general, liu2011controllability, summers2015submodularity}.
We therefore ask not only “which nodes sit at the heart of the network?” but also “which nodes allow us to steer the system most effectively?” \cite{amani2018controllability, bof2016role, gu2015controllability, jalili2015optimal, liu2012control, lindmark2021centrality, pasqualetti2014controllability}.

To answer this, Sato and Terasaki \cite{sato2022controllability} introduced two control theoretic centrality measures---Volumetric Controllability Score (VCS) and Average Energy Controllability Score (AECS)---for linear network systems
\begin{align}
\dot{x}(t) = Ax(t), \label{system0}
\end{align}
where
$A=(a_{ij})\in \Real^{n\times n}$ encodes the network structure; the elements $x_1,\ldots, x_n$ of state $x=(x_i)\in \Real^n$ represent nodes, and $a_{ij}\neq 0$ indicates a directed edge
 from state node $x_j$ to state node $x_i$.
VCS and AECS are defined as solutions to a convex optimization problem referred to as the controllability scoring problem.
To calculate them, an algorithm employing the projected gradient method onto the standard simplex has been proposed in \cite{sato2022controllability} with a theoretical convergence guarantee.

\begin{table*}[t]
\centering
\caption{Summary of Tendencies and Correlations for AECS and VCS in Evaluating Brain Regions}
\label{table:summary}
\begin{tabular}{>{\raggedright}p{3cm} p{12cm}}
\toprule
\textbf{Metric} & \textbf{Tendencies and Correlations} \\
\midrule
\textbf{AECS} & \begin{itemize}
    \item Tends to highly evaluate brain regions associated with cognitive function and motor control.
    \item Tends to lowly evaluate brain regions associated with sensory processing and emotional regulation.
    \item Exhibits moderately positive correlations with conventional centrality metrics:
    \begin{itemize}
        \item Indegree: $\approx 0.64$
        \item Outdegree: $\approx 0.64$
        \item Betweenness: $\approx 0.68$
        \item PageRank: $\approx 0.64$
    \end{itemize}
    \item Shows a moderately negative correlation with Average Controllability ($\approx -0.60$).
\end{itemize} \\
\midrule
\textbf{VCS} & \begin{itemize}
    \item Tends to highly evaluate brain regions associated with sensory processing and emotional regulation.
    \item Tends to lowly evaluate brain regions associated with cognitive function and motor control.
    \item Exhibits weak negative correlations with conventional centrality metrics:
    \begin{itemize}
        \item Indegree: $\approx -0.30$
        \item Outdegree: $\approx -0.27$
        \item Betweenness: $\approx -0.26$
        \item PageRank: $\approx -0.29$
    \end{itemize}
    \item Shows a strong positive correlation with Average Controllability ($\approx 0.84$).
\end{itemize} \\
\bottomrule
\end{tabular}
\end{table*}

The uniqueness of VCS and AECS is crucial for their use as centrality measures for each state node, ensuring interpretability, comparability, and reproducibility.
The uniqueness has been rigorously proven for asymptotically stable system \eqref{system0} in \cite{sato2022controllability}.
However, for unstable systems, uniqueness for any specified time parameter $T$ has only been guaranteed 
in some special cases, leaving several aspects of the theoretical analysis incomplete.

Therefore, the primary objective of this paper is to address several unresolved issues in \cite{sato2022controllability}.
First, it remains unclear whether VCS and AECS can be uniquely defined for all linear systems of the form \eqref{system0}, particularly for Laplacian dynamics associated with directed graphs. 
Notably, Laplacian dynamics find applications in multiagent systems and opinion dynamics in social networks \cite{clark2012leader, fitch2016joint, pirani2018robustness, she2021energy, yi2021shifting}.
Second, the existing results provided in the literature do not specify the exact class of systems for which VCS and AECS differ. Third, the effect of time parameter $T$ on VCS and AECS has not been adequately investigated. Fourth, the convergence analysis of the algorithm for computing VCS and AECS proposed in \cite{sato2022controllability} remains insufficient. 
Finally, the effectiveness of VCS and AECS in real-world network systems has yet to be fully demonstrated.

 The contributions of this paper are as follows:
\begin{itemize}
    \item  We proved that, for any given matrix $A$ in system \eqref{system0}, VCS and AECS are unique for almost all specified time parameters $T$. This result extends the applicability of these centrality measures to a much broader class of systems.

\item We proved that system \eqref{system0} yields significant differences between VCS and AECS when $A$ is symmetric, while yielding identical values when $A$ is skew-symmetric.

\item  We investigated the dependence of VCS and AECS on the specified time parameter $T$. In particular, we proved that when $T$ is extremely small, VCS and AECS yield essentially uniform scores. Moreover, this result was confirmed by numerical experiments.

\item We conducted a detailed convergence analysis of the algorithm proposed in \cite{sato2022controllability} for computing VCS and AECS, and proved that, under several assumptions, a sequence generated by the algorithm converges linearly to either VCS or AECS. This finding was also validated by numerical experiments.

\item 
In our evaluation of brain networks modeled via Laplacian dynamics using real data, we observed distinct evaluation tendencies and correlations for VCS and AECS, as summarized in Table~\ref{table:summary}. Specifically, the AECS metric tends to assign higher values to brain regions associated with cognitive function and motor control, while lower values are attributed to regions involved in sensory processing and emotional regulation. In contrast, VCS shows the opposite trend by highly evaluating sensory and emotional regions and assigning lower values to cognitive and motor regions. Furthermore, AECS exhibits moderately strong positive correlations with traditional centrality measures (indegree, outdegree, betweenness, and PageRank) \cite{bloch2023centrality, rodrigues2019network} and a moderately negative correlation with Average Controllability as introduced in \cite{summers2015submodularity}, whereas VCS demonstrates weak negative correlations with these conventional metrics but a strong positive correlation with Average Controllability. 
\end{itemize}

The remainder of this paper is organized as follows.
In Section \ref{sec:Prelim}, we define VCS and AECS, and summarize the existing results presented in \cite{sato2022controllability}.
In Section \ref{sec:uniqueness}, we discuss the uniqueness of VCS and AECS for any matrix $A\in {\bb R}^{n\times n}$.
In Section \ref{sec:special}, we clarify the class of systems that yield differences between VCS and AECS.
In Section \ref{Sec_effect_T}, we investigate the dependence of VCS and AECS on the specified time parameter
$T$. 
 In Section \ref{Sec_Convergence}, we present a detailed convergence analysis of the algorithm for computing VCS and AECS. 
In Section \ref{Sec_numerical}, we evaluate VCS and AECS in brain networks modeled as Laplacian dynamics defined by using real data.
Finally, Section \ref{Sec_conclusion} concludes this paper.

{\it Notation:}
 The sets of real and complex numbers are denoted by ${\bb R}$ and ${\bb C}$, respectively.
 For $a\in {\bb C}$, ${\rm Re}(a)$ denotes the real part of $a$.
 For a matrix $X\in {\bb R}^{m\times n}$, $X^{\top}$ denotes the transpose of $X$.
 For a square matrix $A\in {\bb R}^{n\times n}$, $\det A$ and
${\rm tr}(A)$ denote the determinant and  diagonal sum of $A$, respectively.
For a symmetric matrix $A$,
$A\succ O$ denotes the positive definite matrix, where $O$ is the zero matrix.
The symbol $I$ denotes the identity matrix of appropriate size.
Given a vector $v=(v_i)\in {\bb R}^n$, $\|v\|$ and ${\rm diag}(v_1,\ldots, v_n)$ denote the usual Euclidean norm $\|v\|=\sqrt{v^{\top}v}$
and the diagonal matrix with the diagonal elements $v_1,\ldots, v_n$, respectively.
Instead of ${\rm diag}(v_1,\ldots, v_n)$, we also use ${\rm diag}(v)$.
The symbol ${\bf 1}$ represents a column vector whose elements are all $1$.
The symbol $S_n$ denotes the symmetric group of order $n$.
For $\sigma \in S_n$, ${\rm sgn}(\sigma)$ denotes the sign of the permutation $\sigma$.

%%%%%%%%%%%%%%%%%
\section{Preliminaries} \label{sec:Prelim}

To define two controllability scores, VCS and AECS,
we introduce a virtual system that differs from \eqref{system0}:
\begin{align}
  \dot x(t) = Ax(t) + {\rm diag}(\sqrt{p_1},\ldots, \sqrt{p_n})u(t),\label{eq:lti}
\end{align}
which establishes a one-to-one correspondence between each state node $x_i$ and a virtual input node $u_i$, thereby allowing us to associate the value $p_i$ with $x_i$.
For example, consider system \eqref{system0} with the network structure illustrated in Fig.\,\ref{fig:directed1}.
To define controllability scores for the system,
we assume---as illustrated in Fig.\,\ref{fig:CS_idea}---that there is a one-to-one correspondence between state nodes and virtual input nodes, with nonnegative weights $p_1,\ldots, p_{10}$ assigned to the directed edges from the input nodes to the state nodes. Then, the basic idea behind the controllability score is to interpret the values $p_1,\ldots, p_{10}$ (obtained when controllability is maximized) as the importance of the state nodes \textcircled{\scriptsize 1},\ldots, \textcircled{\scriptsize 10} in Fig.\,\ref{fig:directed1}, respectively.

\begin{figure}[t]
    \centering
    \includegraphics[width=4.5cm]{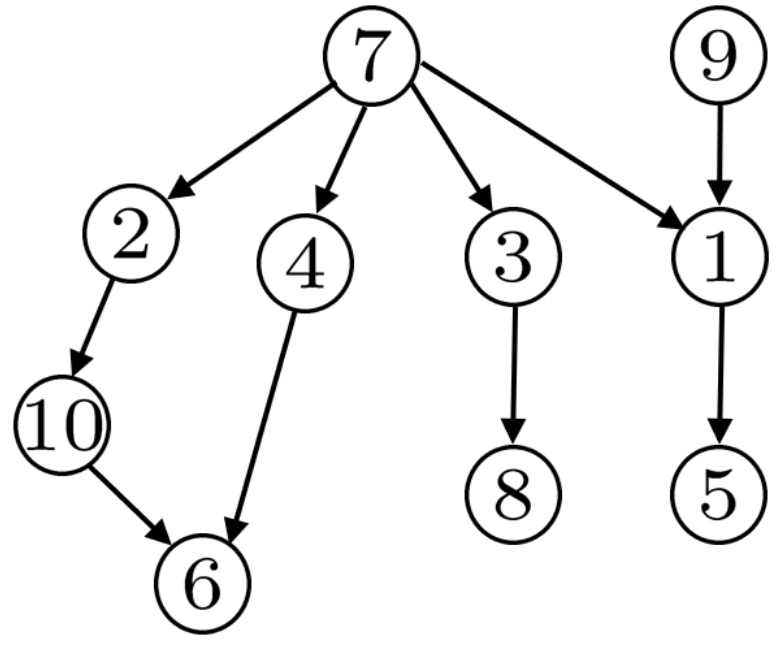}
    \caption{Directed graph with a hierarchical structure. State node \textcircled{\scriptsize 7} is expected to be more important in terms of controllability than state nodes \textcircled{\scriptsize 5}, \textcircled{\scriptsize 6}, and \textcircled{\scriptsize 8}. This is evident because the input associated with state node \textcircled{\scriptsize 7} influences all state nodes except for \textcircled{\scriptsize 9} via directed edges, whereas the inputs associated with state nodes \textcircled{\scriptsize 5}, \textcircled{\scriptsize 6}, and \textcircled{\scriptsize 8} do not affect any other state nodes.}
    \label{fig:directed1}
\end{figure}

\begin{figure}[t]
    \centering
    \includegraphics[width=8cm]{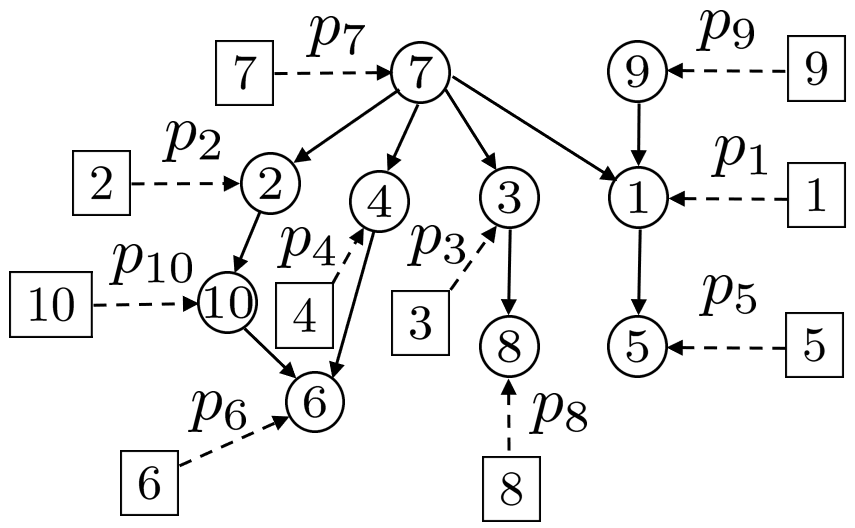}
    \caption{Idea for defining controllability scores.
The circle (\(\circ\)) represents a state node, and the square (\(\square\)) represents a virtual input node.
 Each state node corresponds one-to-one with an input node.}
    \label{fig:CS_idea}
\end{figure}

Then, for any positive number $T$, we define two convex sets on ${\bb R}^n$:
\begin{align}
    X_T &:= \{p\in {\bb R}^n \mid W(p,T)\succ O\}, \\
     \Delta & := \left\{ p = (p_i) \in \Real^n \left|
    \begin{array}{l}
         \sum_{i=1}^n p_i = 1,    \\
         0 \leq p_i \quad (i=1,\dots,n)  
    \end{array}
    \right. \right\}, \label{eq:simplex}
\end{align}
where $W(p,T)$ is the finite-time controllability Gramian of system \eqref{eq:lti}, and is given by
\begin{align}
    W(p, T)  
     = \sum_{i=1}^n p_i W_i(T) \label{Def_Wc}
\end{align}
with
\begin{align}
\label{eq:finite_time_horizon_gramian}
    W_i(T) := \int_0^T \exp(At) e_ie_i^\top \exp(A^\top t) \ \D t.
\end{align}
Here, $e_i\in\Real^n$
 denotes the standard vector which has 1 at $i$-th position and zeros at other positions.

%%%%%%%%
\subsection{VCS and AECS}

To define VCS and AECS, we consider the following problem.
\begin{framed}
\vspace{-1em}
 \begin{align}
\label{prob:unstable}
    \begin{aligned}
        &&& \text{minimize} && h_T(p) \\
        &&& \text{subject to} && p \in X_T\cap \Delta.
    \end{aligned}
\end{align}
\vspace{-1em}
\end{framed}

\noindent
Here, $h_T(p)$ is $f_T(p)$ or $g_T(p)$ on the set $X_T$, which are defined as
\begin{align}
f_T(p) &:= -\log\det W(p, T) \label{def_fT}, \\
    g_T(p) &:= \tr \paren{W(p, T)^{-1}}. \label{def_gT}
\end{align}
Note that $p\in X_T$ means that the virtual system \eqref{eq:lti} is controllable.
Moreover, the constraint $p \in \Delta$ ensures that $p_1,\ldots,p_n$ reflect the relative importance of the corresponding state nodes $x_1,\ldots, x_n$.

For an optimal solution $p$ to Problem \eqref{prob:unstable},
we define the controllability score for state node $x_i$ as follows:
if $h_T(p)=f_T(p)$, then $p_i$ is termed the volumetric controllability score (VCS); 
if $h_T(p)=g_T(p)$, then $p_i$ is the average energy controllability score (AECS). As explained in \cite{sato2022controllability}, each VCS of each state node indicates its importance in enlarging the controllability ellipsoid
\begin{align*}
\mathcal{E}(p,T) := \{y\in\Real^n\mid y^\top W(p,T)^{-1}y \leq 1\}.
\end{align*}
Each AECS of each state node indicates its importance in steering the overall state to a point on the unit sphere. Hereafter, we refer to \eqref{prob:unstable} as the finite-time controllability scoring problem (FTCSP).

Larger VCS and AECS values highlight their significant contribution to the system's controllability.
In fact, since $W_i(T)\neq O$ (because ${\rm tr} (W_i(T))>0$),
for any $p\in X_T$, the $i$-th components of the gradients of $f_T$ and $g_T$ satisfy
\begin{align}
   (\nabla f_T(p))_i &= -{\rm tr}(W(p,T)^{-1}W_i(T))\label{gradf} \\
   &=-{\rm tr} (W(p,T)^{-1/2} W_i(T) W(p,T)^{-1/2}) <0, \nonumber \\
   (\nabla g_T(p))_i &= -{\rm tr}(W(p,T)^{-1}W_i(T)W(p,T)^{-1})<0.  \label{gradg}
\end{align}
Because both gradients are strictly negative,
 increasing $p_i$ enhances the volume of the controllability ellipsoid $\mathcal{E}(p,T)$ and reduces the average energy required for state steering \cite{sato2022controllability}.

Following the approach in \cite[Theorems 1 and 3]{sato2022controllability}, we can prove that problem \eqref{prob:unstable} is a convex optimization problem with an optimal solution, using the compactness of $\Delta$ in ${\bb R}^n$ and the following Hessians for $f_T(p)$ and $g_T(p)$:
  \begin{align}
    &(\nabla^2 f_T(p))_{ij} = \tr (W(p,T)^{-1} W_i(T)W(p,T)^{-1}W_j(T)), \label{Hessian_f}\\
     & (\nabla^2 g_T(p))_{ij} \label{Hessian_g} \\
     &= \tr (W(p,T)^{-1} W_i(T)W(p,T)^{-1}W_j(T)W(p,T)^{-1}) \nonumber\\
      &\,\,\,\, +\tr (W(p,T)^{-1} W_j(T)W(p,T)^{-1}W_i(T)W(p,T)^{-1}). \nonumber
  \end{align}

% FTCSP \eqref{prob:unstable} has an optimal solution, because
% \eqref{prob:unstable} is equivalent to
%  \begin{equation}
%     \label{prob:unstable2}
%     \begin{aligned}
%         &&& \text{minimize} && h_T(p) \\
%         &&& \text{subject to} && p \in \mathcal{F}_0,
%     \end{aligned}
% \end{equation}
% where
% \begin{align*}
% \mathcal{F}_0:=\{p\in {\bb R}^n\,|\,f(p)\leq f(p^{(0)})\} \cap (X_T\cap \Delta),
% \end{align*}
% and $p_0$ is any point on $X_T\cap \Delta$.
% For example, $p_0={\bf 1}/n$.

If the optimal solution to FTCSP \eqref{prob:unstable} is unique, VCS and AECS---which can be calculated using Algorithm \ref{alg:projgrad} proposed in \cite{sato2022controllability}---serve as centrality measures for the network system \eqref{system0}. In this algorithm, $\Pi_\Delta$ at step 2 represents the efficient projection onto the standard simplex $\Delta$ in \eqref{eq:simplex}, as detailed in \cite{condat2016fast}.
In  Algorithm \ref{alg:projgrad}, 
the quantities corresponding to the limit as $T\rightarrow \infty$ of the algorithm proposed in \cite{sato2022controllability} are replaced by
$W_1(T),\ldots, W_n(T)$, $h_T(p)$, and $\nabla h_T(p)$.

Nevertheless,
we can prove the following proposition in the same manner as \cite[Theorem 6]{sato2022controllability}.

\begin{proposition} \label{Prop_convergence}
Suppose that an optimal solution to FTCSP \eqref{prob:unstable} is unique.
If $\{p^{(k)}\}$ is a sequence generated by Algorithm \ref{alg:projgrad} with $\varepsilon=0$, then
    \begin{align*}
        \lim_{k\rightarrow \infty} p^{(k)} = p^*,
    \end{align*}
    where $p^*$ is the optimal solution to  FTCSP \eqref{prob:unstable}, yielding VCS when $h_T=f_T$ and AECS when $h_T=g_T$.
\end{proposition}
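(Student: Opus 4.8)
The plan is to mirror the proof of \cite[Theorem~6]{sato2022controllability}, with $h_T$ playing the role of the infinite-horizon objective. Let $p^{(0)}\in X_T\cap\Delta$ be the initial point of Algorithm~\ref{alg:projgrad}; such a point exists because $(A,I)$ is controllable, so $p^{(0)}=\mathbf 1/n$ gives $W(\mathbf 1/n,T)=\tfrac1n\int_0^T\exp(At)\exp(A^\top t)\,\D t\succ O$. The first step is to observe that the sublevel set $\mathcal S:=\{p\in X_T\cap\Delta\mid h_T(p)\le h_T(p^{(0)})\}$ is a compact subset of the open set $X_T$: it is bounded because $\Delta$ is bounded, and it is closed because $\det W(p,T)\downarrow 0$ forces $f_T(p)\to\infty$ and $g_T(p)\to\infty$, so $\mathcal S$ contains none of its boundary points lying on $\partial X_T$. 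Consequently $\nabla^2 h_T$, given by \eqref{Hessian_f} or \eqref{Hessian_g}, is continuous and hence bounded on a compact neighborhood of $\mathcal S$ contained in $X_T$, so $\nabla h_T$ is Lipschitz there with some constant $L>0$; moreover $\nabla h_T$ is continuous on all of $X_T$ by \eqref{gradf}–\eqref{gradg}.

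The second step is to show the iterates remain in $\mathcal S$ with a sufficient decrease per step. Because $X_T$ is open and $h_T$ blows up on $\partial X_T$, the step-size rule of Algorithm~\ref{alg:projgrad} (backtracking until the descent condition holds) terminates with $p^{(k+1)}\in X_T\cap\Delta$ satisfying
\begin{align*}
h_T(p^{(k+1)})\le h_T(p^{(k)})-c\,\|p^{(k+1)}-p^{(k)}\|^2
\end{align*}
for some $c>0$ independent of $k$; this is the standard consequence of the $L$-Lipschitz gradient on $\mathcal S$ together with the firm nonexpansiveness of $\Pi_\Delta$. In particular $\{h_T(p^{(k)})\}$ is nonincreasing, so $p^{(k)}\in\mathcal S$ for all $k$, and since this sequence is bounded below by the optimal value of \eqref{prob:unstable}, telescoping yields $\sum_{k\ge0}\|p^{(k+1)}-p^{(k)}\|^2<\infty$, hence $\|p^{(k+1)}-p^{(k)}\|\to0$. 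One also records that on the compact set $\mathcal S$ the accepted step sizes $t_k$ stay bounded away from $0$.

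The third step is the usual compactness/continuity argument. By compactness of $\mathcal S$, the sequence $\{p^{(k)}\}$ has an accumulation point $\bar p\in\mathcal S$. Passing to the limit along a convergent subsequence in $p^{(k+1)}=\Pi_\Delta\bigl(p^{(k)}-t_k\nabla h_T(p^{(k)})\bigr)$, and using $\|p^{(k+1)}-p^{(k)}\|\to0$, continuity of $\nabla h_T$ and of $\Pi_\Delta$, and the lower bound on $t_k$, we obtain that $\bar p$ is a fixed point of the projected-gradient map, i.e. a stationary point of the convex problem \eqref{prob:unstable}, hence a global minimizer. By the assumed uniqueness, $\bar p=p^*$. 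Therefore every accumulation point of $\{p^{(k)}\}$ equals $p^*$, and since $\{p^{(k)}\}$ lies in the compact set $\mathcal S$, this forces $p^{(k)}\to p^*$, which is VCS when $h_T=f_T$ and AECS when $h_T=g_T$.

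I expect the main obstacle to be the first half of the second step: guaranteeing that the line search keeps every iterate inside the open domain $X_T$ on which $h_T$ is finite and smooth — equivalently, that no iterate ever renders the virtual system \eqref{eq:lti} uncontrollable. This is precisely where the barrier-like blow-up of $f_T$ and $g_T$ as $\det W(p,T)\downarrow 0$ is essential, and it is what distinguishes the argument from textbook projected-gradient convergence over a fixed compact feasible set.
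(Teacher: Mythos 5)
Your argument is correct and is essentially the route the paper takes: the paper proves Proposition \ref{Prop_convergence} by invoking the proof of \cite[Theorem 6]{sato2022controllability}, which rests on exactly the ingredients you reconstruct---compactness of the sublevel set $\mathcal{F}_T^{(0)}$ via the barrier-type blow-up of $f_T$ and $g_T$ as $\det W(p,T)\downarrow 0$, stationarity of accumulation points under the Armijo rule along the projection arc, and convexity plus the assumed uniqueness to force convergence of the whole sequence. The subtlety you flag (iterates and trial points staying in $X_T$) is handled there the same way, so no gap remains.
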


\begin{figure}[!t]
\begin{algorithm}[H]
    \caption{A projected gradient method}
    \label{alg:projgrad}
  %   \hspace*{\algorithmicindent}
  \textbf{Input:} Controllability Gramians $W_1(T),\ldots, W_n(T)$ in \eqref{eq:finite_time_horizon_gramian}, $p^{(0)} := (1/n,\ldots, 1/n) \in X_T \cap \Delta$, and $\varepsilon\geq 0$.\\
 %\hspace*
 %{\algorithmicindent}
 \textbf{Output:} {VCS or AECS.}
    \begin{algorithmic}[1]
    %\REQUIRE{Controllability Gramians $W_1,\ldots, W_n$ in \eqref{eq:horizon_gramian}, $p^{(0)} := (1/n,\ldots, 1/n) \in\mathcal{F}$, and $\varepsilon\geq 0$.}
    \FOR{$k=0,1,\ldots$}
    \STATE $p^{(k+1)} := \Pi_{\Delta}(p^{(k)}- \alpha^{(k)} \nabla h_T(p^{(k)}))$, where $\alpha^{(k)}$ is defined by using Algorithm \ref{alg:Armijo}. 
        \IF{$\|p^{(k+1)}-p^{(k)}\|\leq \varepsilon$}
        \RETURN $p^{(k+1)}$.
        \ENDIF
    \ENDFOR
    \end{algorithmic}
\end{algorithm}
\end{figure}

\begin{figure}[!t]
\begin{algorithm}[H]
    \caption{Armijo rule along the projection arc}
    \label{alg:Armijo}
    \textbf{Input:} $\sigma,\,\rho\in (0,1)$ and $\alpha>0$.\\
    \textbf{Output:} {Step size $\alpha^{(k)}$.}
    \begin{algorithmic}[1]
    %\REQUIRE{$\sigma,\,\rho\in (0,1)$ and $\alpha>0$.}
    \STATE $\tilde{p}^{(k)}:=\Pi_{\Delta}(p^{(k)}-\alpha \nabla h_T(p^{(k)}))$.
    \IF{$h_T(\tilde{p}^{(k)})\leq h_T(p^{(k)}) + \sigma \nabla h_T(p^{(k)})^\top (\tilde{p}^{(k)}-p^{(k)})$}
        \RETURN $\alpha^{(k)}:=\alpha$.
    \ELSE
    \STATE $\alpha\leftarrow \rho \alpha$, and go back to step 1.
        \ENDIF
    \end{algorithmic}
\end{algorithm}
\end{figure}

\begin{remark}
    Our VCS and AECS are defined as the optimal solutions to optimization problem \eqref{prob:unstable}. 
Although it is almost trivial--- as shown in \cite{baggio2022energy}---that the objective function values of $f_T(p)$ and $g_T(p)$ differ, the solutions are constrained to lie within $\Delta$ (a normalized range between $0$ and $1$), and thus they are not necessarily distinct.
This normalization guarantees that the differences we observe between VCS and AECS truly reflect the unique aspects of network controllability measured by each metric rather than merely differences in their function value scales.
\end{remark}

%%%%%%%%%%
\subsection{Existing results}

We summarize the existing results for FTCSP \eqref{prob:unstable}.

The following proposition can be proven in the same way with \cite[Theorem 2]{sato2022controllability} for $T\rightarrow \infty$, although our analysis is conducted for finite $T>0$.

\begin{proposition} \label{Prop_stable}
    If system \eqref{system0} is asymptotically stable, for all $T>0$,
   FTCSP \eqref{prob:unstable} admits a unique optimal solution, yielding VCS when $h_T=f_T$ and AECS when $h_T=g_T$.
\end{proposition}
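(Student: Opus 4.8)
The plan is to mimic the argument of \cite[Theorem~2]{sato2022controllability}, which treats the infinite-horizon Gramian, and to check that finiteness of $T$ changes nothing essential. First I would establish that when system \eqref{system0} is asymptotically stable, every $W_i(T)$ with $T>0$ is already positive definite: indeed $W_i(T)=\int_0^T \exp(At)e_ie_i^\top\exp(A^\top t)\,\D t$ is the finite-time controllability Gramian of the single-input pair $(A,e_i)$, and since the system $\dot x = Ax$ is stable the pair $(A,e_i)$ may fail to be controllable, so strictly I only get $W_i(T)\succeq O$; what I actually need is the weaker fact that $\sum_{i=1}^n W_i(T)=\int_0^T\exp(At)\exp(A^\top t)\,\D t\succ O$ for every $T>0$, which holds because the integrand is positive definite for each $t$. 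Consequently the barycenter $p^{(0)}=(1/n,\dots,1/n)$ lies in $X_T\cap\Delta$, so $X_T\cap\Delta\neq\emptyset$ and the feasible set is nonempty; moreover $\Delta$ is compact, $h_T$ (either $f_T$ or $g_T$) is continuous on the relatively open set $X_T$, and $h_T(p)\to+\infty$ as $p$ approaches the boundary of $X_T$ within $\Delta$ (because $\det W(p,T)\to 0$, resp.\ $\|W(p,T)^{-1}\|\to\infty$). Hence a minimizer exists in $X_T\cap\Delta$, exactly as in the convexity/existence discussion already given after \eqref{Hessian_g}.

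Next I would prove uniqueness via strict convexity of $h_T$ on the convex set $X_T\cap\Delta$. For $f_T$ this is classical: $p\mapsto-\log\det W(p,T)$ is strictly convex along any segment on which $W(p,T)$ genuinely varies, and since the map $p\mapsto W(p,T)=\sum_i p_iW_i(T)$ is affine and — this is the key point — injective on the hyperplane $\{\sum_i p_i=1\}$, the composition is strictly convex there. Injectivity of $p\mapsto\sum_i p_iW_i(T)$ amounts to linear independence of $W_1(T),\dots,W_n(T)$ in the space of symmetric matrices; I would verify this directly from \eqref{eq:finite_time_horizon_gramian} by comparing, say, the $(i,i)$ and off-diagonal entries, or by a small-$T$ expansion $W_i(T)=Te_ie_i^\top+O(T^2)$ which already shows the leading terms $e_ie_i^\top$ are independent, hence the $W_i(T)$ are independent for all sufficiently small $T$; for general $T>0$ under stability the same conclusion follows as in \cite{sato2022controllability}. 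Given strict convexity on $X_T\cap\Delta$, the minimizer is unique. The argument for $g_T$ is identical in structure: $X\mapsto\tr(X^{-1})$ is strictly convex on the cone of positive definite matrices, and composing with the injective affine map $p\mapsto W(p,T)$ restricted to $\Delta$ preserves strict convexity; alternatively one can invoke the Hessian \eqref{Hessian_g}, which is positive definite on the tangent space $\{\mathbf 1^\top v=0\}$ precisely because the $W_i(T)$ are independent and $W(p,T)\succ O$.

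I expect the main obstacle to be the step asserting \emph{linear independence of} $W_1(T),\dots,W_n(T)$ — this is what ultimately powers strict convexity and hence uniqueness, and it is the place where one must actually use properties of the matrix exponential rather than soft convex-analytic facts. The cleanest route is the small-$T$ Taylor expansion $W_i(T)=\int_0^T\exp(At)e_ie_i^\top\exp(A^\top t)\,\D t$, whose derivative at $T=0$ is $e_ie_i^\top$; since $\{e_ie_i^\top\}_{i=1}^n$ are linearly independent, independence of the $W_i(T)$ holds for small $T>0$, and for the full range $T>0$ one appeals to analyticity of $T\mapsto\det[\,\langle W_i(T),W_j(T)\rangle\,]$ (the relevant Gram determinant), which, being real-analytic and not identically zero, vanishes only on a discrete set — but under the stability hypothesis the cleaner statement from \cite[Theorem~2]{sato2022controllability} already rules out any such degeneracy, so I would simply cite that the same proof applies verbatim with the finite-$T$ Gramian in place of the stabilizing solution of the Lyapunov equation. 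Everything else (existence, convexity, the VCS/AECS labeling) is routine and parallels the cited reference.
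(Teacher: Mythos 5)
Your architecture—existence by compactness plus blow-up of $h_T$ at the boundary of $X_T$, and uniqueness by strict convexity, which you correctly reduce to injectivity of the affine map $p\mapsto W(p,T)$ on the simplex, i.e.\ to linear independence of $W_1(T),\dots,W_n(T)$—is the same reduction the paper relies on (its Lemma~\ref{lem:sufficient condition of uniqueness}: $W(x,T)=O\Rightarrow x=0$ implies uniqueness), and the paper likewise settles Proposition~\ref{Prop_stable} by asserting that the stable-case argument of the cited reference carries over to finite $T$. The gap is in the one step that carries all the content: your justification that independence holds for \emph{all} $T>0$ under stability. Your concrete argument (the expansion $W_i(T)=Te_ie_i^\top+O(T^2)$ plus real-analyticity of a Gram determinant) only excludes a measure-zero set of $T$; that is exactly the weaker ``almost all $T$'' statement of Theorem~\ref{thm:unique_anyA}, valid for arbitrary $A$, and it does not use stability at all, so it cannot by itself deliver the ``for all $T>0$'' claim. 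Your fallback—that the infinite-horizon proof ``applies verbatim with the finite-$T$ Gramian in place of the stabilizing solution of the Lyapunov equation''—is not verbatim: $W_i^{\infty}$ satisfies $AW_i^{\infty}+W_i^{\infty}A^\top+e_ie_i^\top=O$, so $W(x,\infty)=O$ immediately forces $\diag(x)=O$, whereas $W_i(T)$ satisfies only $AW_i(T)+W_i(T)A^\top=\e^{AT}e_ie_i^\top\e^{A^\top T}-e_ie_i^\top$, so from $W(x,T)=O$ you obtain merely the fixed-point relation $\e^{AT}\diag(x)\,\e^{A^\top T}=\diag(x)$, and stability must be invoked a second time to finish.

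The missing step is short but essential: since $A$ is stable, every eigenvalue of $\e^{AT}$ has modulus strictly less than one, so the linear map $M\mapsto \e^{AT}M\e^{A^\top T}$ has spectral radius strictly less than one and its only fixed point is $M=O$ (equivalently, iterate the relation to get $\diag(x)=\e^{AkT}\diag(x)\,\e^{A^\top kT}\to O$ as $k\to\infty$). Hence $\diag(x)=O$, i.e.\ $x=0$, which gives linear independence of $W_1(T),\dots,W_n(T)$ for every $T>0$; with that in hand your strict-convexity and existence arguments go through and the proof matches the paper's intended route. Without this (or some equivalent use of stability at finite $T$), your proposal proves only the almost-all-$T$ version of the statement.
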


The following has been shown in \cite[Theorem 4]{sato2022controllability}.

\begin{proposition}\label{Prop_unstable_realeigen}
     Assume that $A$ and $-A$ of system \eqref{system0} do not have a common eigenvalue.
    Then, for all $T>0$,
   FTCSP \eqref{prob:unstable} admits a unique optimal solution, yielding VCS when $h_T=f_T$ and AECS when $h_T=g_T$.
\end{proposition}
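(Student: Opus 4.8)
The plan is to reduce the uniqueness claim to the linear independence of $W_1(T),\dots,W_n(T)$ and then to extract that independence from the hypothesis on $\mathrm{spec}(A)$.

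\emph{Step 1 (reduction to linear independence).} Since $W(p,T)=\sum_i p_iW_i(T)$ is affine in $p$, on any segment $p^\lambda:=(1-\lambda)p^0+\lambda p^1$ contained in $X_T$ one computes, with $\Delta W:=\sum_i(p^1_i-p^0_i)W_i(T)$, that $\tfrac{\D^2}{\D\lambda^2}f_T(p^\lambda)=\tr\big((W(p^\lambda,T)^{-1}\Delta W)^2\big)=\big\|W(p^\lambda,T)^{-1/2}\Delta W\,W(p^\lambda,T)^{-1/2}\big\|_F^2$, and likewise $\tfrac{\D^2}{\D\lambda^2}g_T(p^\lambda)=2\tr\big(W(p^\lambda,T)^{-1}\Delta W\,W(p^\lambda,T)^{-1}\Delta W\,W(p^\lambda,T)^{-1}\big)$, which is again a squared Frobenius norm; both vanish iff $\Delta W=O$. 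Hence if $W_1(T),\dots,W_n(T)$ are linearly independent, then $\Delta W\neq O$ whenever $p^0\neq p^1$, so $h_T$ is strictly convex along every segment of the convex set $X_T\cap\Delta$, and since an optimal solution exists (the compactness argument recalled in the preliminaries), it is unique. It therefore suffices to prove that $W_1(T),\dots,W_n(T)$ are linearly independent for every $T>0$.

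\emph{Step 2 (a linear relation forces a Lyapunov fixed point).} Suppose $\sum_i c_iW_i(T)=O$ and set $D:=\mathrm{diag}(c)$, so that $P:=\sum_i c_iW_i(T)=\int_0^T e^{At}De^{A^\top t}\,\D t=O$. Differentiating $t\mapsto e^{At}De^{A^\top t}$ and integrating over $[0,T]$ gives the identity $AP+PA^\top=e^{AT}De^{A^\top T}-D$, so $P=O$ yields $e^{AT}De^{A^\top T}=D$. Moreover the hypothesis that $A$ and $-A$ share no eigenvalue is exactly $\lambda+\mu\neq0$ for all $\lambda,\mu\in\mathrm{spec}(A)$, i.e. $\mathrm{spec}(A)\cap\mathrm{spec}(-A^\top)=\emptyset$, which makes the Lyapunov operator $X\mapsto AX+XA^\top$ invertible; thus linear independence of the $W_i(T)$ follows once we show the only diagonal $D$ with $e^{AT}De^{A^\top T}=D$ is $D=O$.

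\emph{Step 3 (triviality of the fixed point).} The linear map $\mathcal M:X\mapsto e^{AT}Xe^{A^\top T}$ on $\Real^{n\times n}$ has eigenvalues $e^{(\lambda_i+\lambda_j)T}$, where $\lambda_1,\dots,\lambda_n$ are the eigenvalues of $A$ counted with multiplicity. Under the hypothesis $\lambda_i+\lambda_j\neq0$ for all $i,j$; granting for the moment that this entails $e^{(\lambda_i+\lambda_j)T}\neq1$ for every $T>0$, we get $1\notin\mathrm{spec}(\mathcal M)$, so $\mathcal M-\mathrm{Id}$ is invertible and the only fixed point of $\mathcal M$—a fortiori the only diagonal one—is $O$. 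Combined with Steps 1--2, this shows that FTCSP \eqref{prob:unstable} has a unique optimal solution for every $T>0$, namely the VCS when $h_T=f_T$ and the AECS when $h_T=g_T$.

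The main obstacle is precisely the bracketed implication in Step 3: passing from $\lambda_i+\lambda_j\neq0$ to $e^{(\lambda_i+\lambda_j)T}\neq1$ for \emph{all} $T>0$. This is immediate when $A$ has real spectrum, since then $\lambda_i+\lambda_j$ is a nonzero real number; in general one must additionally exclude $\lambda_i+\lambda_j$ from being a nonzero purely imaginary number, and it is here that the closure of $\mathrm{spec}(A)$ under complex conjugation, together with the no-common-eigenvalue assumption, must be invoked. Steps 1 and 2 are otherwise routine—Step 1 is the standard strict-convexity property of $-\log\det$ and $\tr(\cdot^{-1})$ on the positive-definite cone, and Step 2 is the classical link between the finite-horizon Gramian and the Lyapunov equation. \qed
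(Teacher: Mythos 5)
Your Steps 1--2 are sound and essentially reproduce the reduction that the paper itself relies on (its lemma that uniqueness follows once $W(x,T)=O$ forces $x=0$, i.e.\ once $W_1(T),\dots,W_n(T)$ are linearly independent); note also that the paper does not reprove this proposition but imports it from \cite[Theorem 4]{sato2022controllability}, so all of the substance lies exactly in the step you leave open. That step, as you set it up, cannot be closed. The hypothesis that $A$ and $-A$ share no eigenvalue says only that $\lambda_i+\lambda_j\neq 0$; it does \emph{not} prevent $\lambda_i+\lambda_j$ from being a nonzero purely imaginary number, even though the spectrum is conjugate-closed. For instance, a real block-diagonal $4\times 4$ matrix $A$ with spectrum $\{1\pm i,\,-1\pm 2i\}$ satisfies the hypothesis (purely imaginary eigenvalues of $A$ itself are automatically excluded, since $\lambda+\bar{\lambda}=0$ would violate it, but sums of eigenvalues with opposite nonzero real parts are not), yet $(1+i)+(-1+2i)=3i$, so at $T=2\pi/3$ and its multiples $e^{(\lambda_i+\lambda_j)T}=1$ and $\mathcal{M}-\mathrm{Id}$ is singular. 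Hence the claim your Step 3 needs, ``$1\notin\mathrm{spec}(\mathcal{M})$ for all $T>0$,'' is false under the proposition's hypotheses, and the conjugate-closure argument you hoped would exclude purely imaginary sums cannot rescue it.

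To complete the argument you would need strictly more than the fixed-point relation: either show that at resonant $T$ the eigenvalue-one eigenspace of $\mathcal{M}$ (spanned by outer products $v_iv_j^\top$ of eigenvectors whose eigenvalues have opposite real parts and resonant imaginary parts) contains no nonzero \emph{real diagonal} matrix, or avoid discarding information in Step 2 and work directly with the full condition $\int_0^T e^{At}\diag(c)\,e^{A^\top t}\,\D t=O$, of which $e^{AT}\diag(c)\,e^{A^\top T}=\diag(c)$ is only a consequence. In the example above the fixed subspace consists of matrices supported on the off-diagonal $2\times 2$ blocks, so it indeed meets the real diagonal matrices only in $O$; but your proposal contains no argument of this kind for general $A$ and general $T$, and that is precisely the content of the cited Theorem 4. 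So, while Steps 1--2 are correct and routine, the proposal has a genuine gap at its central point, and the specific route you sketch for closing it does not work.
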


The following has been shown in \cite[Theorem 5]{sato2022controllability}.

\begin{proposition} \label{Prop_Laplacian}
    Let $L=(\ell_{ij})$ be a graph Laplacian matrix corresponding to an undirected connected graph.
    That is, $L$ is symmetric, and the non-diagonal element $\ell_{ij}$ $(i \neq j)$ is defined as $\ell_{ij} = -c_{ij}$ if there is an edge from state node $j$ to state node $i$ with weight $c_{ij} > 0$, and $\ell_{ij} = 0$ if no such edge exists. The diagonal element $\ell_{ii}$ is given by $\ell_{ii} = -\sum_{j \neq i} \ell_{ij}$.
    Then, for $A=-L$ and all $T>0$,
   FTCSP \eqref{prob:unstable} admits a unique optimal solution, yielding VCS when $h_T=f_T$ and AECS when $h_T=g_T$.
\end{proposition}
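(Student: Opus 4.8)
The plan is to prove uniqueness by showing that the objective $h_T\in\{f_T,g_T\}$ is \emph{strictly} convex on the feasible set $X_T\cap\Delta$; existence of a minimizer over this set is already guaranteed by the argument behind \cite[Theorems 1 and 3]{sato2022controllability} (a minimizing sequence stays in the compact set $\{p\in\Delta\cap X_T:h_T(p)\le h_T(p^{(0)})\}$ with $p^{(0)}=(1/n,\dots,1/n)\in X_T$, because $h_T(p)\to+\infty$ as $\det W(p,T)\to 0^+$). Observe that Proposition~\ref{Prop_unstable_realeigen} does not apply: for a connected-graph Laplacian $L$, the value $0$ is a common eigenvalue of $A=-L$ and $-A=L$ (eigenvector $\mathbf 1$), so a dedicated argument exploiting the symmetry of $A$ is needed.

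The crux is to prove that $W_1(T),\dots,W_n(T)$ in \eqref{eq:finite_time_horizon_gramian} are linearly independent in $\Real^{n\times n}$ for every $T>0$. Since $A=-L$ is symmetric, $\exp(A^\top t)=\exp(At)$, so for any $v=(v_i)\in\Real^n$,
\[
\sum_{i=1}^n v_iW_i(T)=\int_0^T\exp(At)\diag(v)\exp(At)\,\D t.
\]
Diagonalizing $L=\Phi\diag(\lambda_1,\dots,\lambda_n)\Phi^\top$ with $\Phi=[\phi_1\ \cdots\ \phi_n]$ orthogonal and $\lambda_k\ge 0$, we have $\exp(At)=\sum_{k=1}^n\e^{-\lambda_k t}\phi_k\phi_k^\top$, and expanding the integrand gives
\[
\sum_{i=1}^n v_iW_i(T)=\sum_{k,l=1}^n\gamma_{kl}\big(\phi_k^\top\diag(v)\phi_l\big)\phi_k\phi_l^\top,\qquad \gamma_{kl}:=\int_0^T\e^{-(\lambda_k+\lambda_l)t}\,\D t.
\]
Each $\gamma_{kl}>0$ because its integrand is strictly positive on $[0,T]$, and $\{\phi_k\phi_l^\top\}_{1\le k,l\le n}$ is an orthonormal basis of $\Real^{n\times n}$ for the Frobenius inner product (orthonormality of $\{\phi_k\}$, and $n^2$ matrices). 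Hence $\sum_i v_iW_i(T)=O$ forces $\phi_k^\top\diag(v)\phi_l=0$ for all $k,l$, i.e.\ $\Phi^\top\diag(v)\Phi=O$, i.e.\ $v=0$, which proves the linear independence.

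Next I would substitute this into the Hessian formulas \eqref{Hessian_f}--\eqref{Hessian_g}. Fixing $p\in X_T$ and writing $W:=W(p,T)$, $S:=\sum_i v_iW_i(T)$, the cyclic property of the trace gives $v^\top\nabla^2 f_T(p)v=\tr(W^{-1}SW^{-1}S)=\tr\big((W^{-1/2}SW^{-1/2})^2\big)$, and---after using the $i\leftrightarrow j$ symmetry to merge the two summands of \eqref{Hessian_g}---$v^\top\nabla^2 g_T(p)v=2\,\tr\big(W^{-1/2}(W^{-1/2}SW^{-1/2})^2W^{-1/2}\big)$. Since $W^{-1/2}$ is invertible and $W^{-1/2}SW^{-1/2}$ is symmetric, both expressions are nonnegative and vanish only when $S=O$, hence only when $v=0$ by the preceding step. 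Thus $\nabla^2 h_T(p)\succ O$ for every $p\in X_T$, so $h_T$ is strictly convex on the open convex set $X_T$, and therefore on the convex set $X_T\cap\Delta$; a strictly convex function has at most one minimizer over a convex set, and one exists, so FTCSP~\eqref{prob:unstable} has a unique optimal solution---VCS when $h_T=f_T$ and AECS when $h_T=g_T$.

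The only substantive ingredient is the linear independence of the Gramians $W_1(T),\dots,W_n(T)$; the rest is routine trace algebra, the lone subtlety being the identity $\tr(W^{-1}SW^{-1}SW^{-1})=\tr\big(W^{-1/2}(W^{-1/2}SW^{-1/2})^2W^{-1/2}\big)$, which holds because $W^{-1}S$ and $W^{-1/2}SW^{-1/2}$ are conjugate via $W^{1/2}$. I would also note that connectedness of the graph is never used, so the same argument in fact yields uniqueness for any symmetric $A$---a fact that both subsumes Proposition~\ref{Prop_Laplacian} and sets up the symmetric-matrix analysis of Section~\ref{sec:special}.
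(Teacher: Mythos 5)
Your proof is correct. A point of comparison is limited here because the paper does not reprove Proposition~\ref{Prop_Laplacian} at all---it simply imports it as Theorem~5 of \cite{sato2022controllability}. What you do is in effect instantiate the paper's own uniqueness machinery: your ``linear independence of $W_1(T),\dots,W_n(T)$'' is exactly the hypothesis of Lemma~\ref{lem:sufficient condition of uniqueness} (``$W(x,T)=O$ implies $x=0$''), and your Hessian computation from \eqref{Hessian_f}--\eqref{Hessian_g} is the standard route by which that hypothesis yields strict convexity and hence uniqueness, with existence handled as in \cite[Theorems 1 and 3]{sato2022controllability}. Your verification of the hypothesis via the spectral decomposition of $A=-L$ is clean: the crucial observation is that $\gamma_{kl}=\int_0^T \e^{-(\lambda_k+\lambda_l)t}\,\D t>0$ even when $\lambda_k=\lambda_l=0$ (it equals $T$), which is precisely the degenerate direction that defeats Proposition~\ref{Prop_unstable_realeigen} for a Laplacian, and the orthonormality of $\{\phi_k\phi_l^\top\}$ in the Frobenius inner product then forces $\diag(v)=O$. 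Your closing remark is also a genuine strengthening relative to what the present paper records: since only symmetry of $A$ is used, you obtain uniqueness for \emph{all} $T>0$ for every symmetric $A$, whereas the paper covers general symmetric $A$ only through Theorem~\ref{thm:unique_anyA} (almost all $T$) and reserves the all-$T$ statement for the connected-graph Laplacian case via the citation; this is consistent with the paper's non-uniqueness examples, which involve purely imaginary (skew-symmetric-type) spectra rather than symmetric ones. No gaps; the trace identities you flag (conjugating $W^{-1}S$ by $W^{1/2}$) are valid because $W\succ O$ and $S$ is symmetric.
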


The interpretations of Propositions \ref{Prop_stable}, \ref{Prop_unstable_realeigen}, \ref{Prop_Laplacian} are as follows:
Proposition \ref{Prop_stable} states that VCS and AECS are uniquely defined for any asymptotically stable system \eqref{system0} and for all $T>0$.
Proposition \ref{Prop_unstable_realeigen} suggests that VCS and AECS remain uniquely defined for any system whose eigenvalues lie off the imaginary axis, again for all $T>0$.
Proposition \ref{Prop_Laplacian} indicates that VCS and AECS are uniquely defined for any symmetric Laplacian dynamics \eqref{system0} with $A=-L$ corresponding to an undirected graph, for all $T>0$.

%%%%%%%%%%
\subsection{Limitations of existing results and objectives of this paper} \label{Subsec_limitation}

While Propositions \ref{Prop_convergence}, \ref{Prop_stable}, \ref{Prop_unstable_realeigen}, and \ref{Prop_Laplacian} provide valuable insights, they are insufficient in the following respects:
\begin{enumerate}[(i)]
\item It remains unclear whether VCS and AECS are uniquely defined for all linear systems of the form \eqref{system0}, especially in the case of non-symmetric Laplacian dynamics arising from directed graphs.
\item The propositions do not specify the class of systems for which VCS and AECS differ.

\item The investigation into the effect of the terminal time
$T$ for VCS and AECS is lacking.

\item Convergence analysis of Algorithm \ref{alg:projgrad} is not sufficient.

\item The effectiveness of VCS and AECS in real-world network systems is not clear.
\end{enumerate}

This paper aims to resolve and validate the aforementioned points (i)--(v) in the following sections.

%%%%%%%%%%
\section{Analyses}

In this section, we address points (i)--(iv) as outlined in Section \ref{Subsec_limitation}, while point (v) is discussed in Section \ref{Sec_numerical}.

\subsection{Uniqueness of VCS and AECS} \label{sec:uniqueness}

We prove the uniqueness of the VCS and AECS for any linear systems of the form \eqref{system0} for almost all $T>0$.
That is, we resolve (i), as mentioned in Section \ref{Subsec_limitation}.

\begin{theorem} \label{thm:unique_anyA}
For all $A\in \Real ^{n\times n}$ and almost all $T>0$,
   FTCSP \eqref{prob:unstable} admits a unique optimal solution, yielding VCS when $h_T=f_T$ and AECS when $h_T=g_T$.
\end{theorem}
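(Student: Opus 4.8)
The key observation is that the only obstruction to uniqueness in the existing results (Propositions \ref{Prop_stable}--\ref{Prop_Laplacian}) is the possibility that $A$ and $-A$ share an eigenvalue on the imaginary axis. My plan is therefore to reduce Theorem \ref{thm:unique_anyA} to Proposition \ref{Prop_unstable_realeigen} by showing that, for a \emph{fixed} matrix $A$, the set of terminal times $T>0$ at which uniqueness can fail is contained in a set of measure zero. The natural strategy is to perturb the time variable rather than the matrix: I will find a way to convert ``FTCSP at time $T$ for matrix $A$'' into ``FTCSP at time $1$ (or some normalized time) for a rescaled matrix $TA$,'' and then argue that $TA$ and $-TA$ share an eigenvalue only for finitely many (or at least measure-zero many) values of $T$.

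Concretely, the first step is a scaling lemma: substituting $t = T s$ in \eqref{eq:finite_time_horizon_gramian} gives $W_i(T) = T\int_0^1 \exp(T A s)\,e_ie_i^\top \exp(T A^\top s)\,\D s$, which is exactly $T$ times the unit-horizon Gramian $\widetilde W_i(1)$ of the system with matrix $\widetilde A := TA$. Hence $W(p,T) = T\,\widetilde W(p,1)$, and since $f_T$ and $g_T$ are affected only by an additive constant and a positive multiplicative constant respectively under $W \mapsto TW$, the minimizers of FTCSP \eqref{prob:unstable} for $(A,T)$ coincide with the minimizers of the unit-horizon problem for $\widetilde A = TA$. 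The second step is to invoke Proposition \ref{Prop_unstable_realeigen}: the minimizer is unique whenever $TA$ and $-TA = -(TA)$ have no common eigenvalue, i.e. whenever $A$ and $-A$ have no common eigenvalue after scaling — but scaling by $T>0$ does not create or destroy coincidences among eigenvalues, so this condition is either always true or always false for a given $A$. That would only give ``all $T$ or no $T$,'' which is too crude.

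So the real argument must be finer: I need to re-run the proof technique behind Proposition \ref{Prop_unstable_realeigen} and isolate where $T$ enters. The uniqueness proofs in \cite{sato2022controllability} presumably hinge on strict convexity of $h_T$ on the relevant set, equivalently positive definiteness of the Hessians \eqref{Hessian_f}--\eqref{Hessian_g} on the tangent space of $\Delta$; strict convexity can degenerate only on a set where some analytic function of $T$ vanishes. The cleanest route is: (a) show that for each $A$ the Gramians $W_i(T)$ are real-analytic functions of $T$ on $(0,\infty)$; (b) deduce that the ``bad set'' $B_A := \{T>0 : \text{FTCSP has a non-unique minimizer}\}$ is contained in the zero set of some not-identically-zero real-analytic function of $T$; and (c) conclude $B_A$ is discrete, hence Lebesgue-null. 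For step (b) the non-triviality (``not identically zero'') is exactly where I expect the main obstacle: I must exhibit at least one $T_0>0$ where uniqueness holds. Here I can borrow Proposition \ref{Prop_unstable_realeigen} or Proposition \ref{Prop_stable} after a shift — e.g. replacing $A$ by $A - cI$ for large $c$ makes the system asymptotically stable; but shifting changes the Gramian nontrivially, so instead I would more likely argue that the determinant/strict-convexity witness, viewed as an analytic function of $(A,T)$ jointly, is not identically zero (it is nonzero on the dense open set of matrices satisfying Proposition \ref{Prop_unstable_realeigen}'s hypothesis), and that restricting a not-identically-zero real-analytic function on $\Real^{n\times n}\times(0,\infty)$ to the slice $\{A\}\times(0,\infty)$ keeps it not-identically-zero provided the slice is not entirely contained in the zero set — which again requires an \emph{a priori} guarantee that for \emph{some} $T$ and \emph{this} $A$ the problem is strictly convex. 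Establishing that single good time $T_0$ for an arbitrary fixed $A$ — plausibly via a small-$T$ asymptotic expansion of $W(p,T)$, whose leading term $T\,\mathrm{diag}(p) + O(T^2)$ is strictly positive definite for $p$ in the interior of $\Delta$ and yields a strictly convex leading-order problem — is the crux of the argument, and I would devote the bulk of the proof to it.
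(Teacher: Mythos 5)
Your overall skeleton is the same as the paper's: fix $A$, view the uniqueness certificate as an analytic function of $T$, invoke the fact that a not-identically-zero real analytic function has a Lebesgue-null zero set (the paper cites exactly this, Lemma \ref{lem:measure zero}), and extract the non-triviality from the small-$T$ behaviour of the Gramian. So the strategy is sound, and your instinct that the scaling/eigenvalue route through Proposition \ref{Prop_unstable_realeigen} is too crude is also correct. However, two points in your plan need repair before it is a proof. First, you never pin down which analytic function of $T$ contains the bad set in its zero locus; ``strict convexity can degenerate only where some analytic function vanishes'' is not automatic, since the minimal Hessian eigenvalue over the feasible set is not obviously analytic in $T$. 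The paper's resolution is to use the algebraic sufficient condition of Lemma \ref{lem:sufficient condition of uniqueness}: uniqueness holds whenever the linear map $x\mapsto W(x,T)$ is injective, and looking only at the diagonal entries of $W(x,T)$ reduces injectivity to nonsingularity of the explicit matrix $R(T)=\int_0^T\bigl(P_{ij}(t)^2\bigr)_{ij}\,\D t$ with $P(t)=\E^{At}$, whose determinant $\varphi(T)$ is manifestly analytic.

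Second, your proposed way of certifying non-triviality is off-target as stated: positive definiteness of the leading term $T\,\mathrm{diag}(p)$ at interior $p\in\Delta$ holds for every $T>0$ and every $A$ (it is just controllability of the virtual system), so it cannot serve as the witness, and strict convexity of the \emph{approximate} problem does not by itself transfer to the true problem at any particular $T_0$ without a perturbation argument that is delicate near the boundary of $\Delta$. What the small-$T$ expansion actually buys is injectivity of the linear map $x\mapsto W(x,T)$, because the leading term $T\,\mathrm{diag}(x)$ vanishes only at $x=0$; in the paper's formulation this becomes $R(0)=O$ and $\frac{\D R}{\D T}(0)=I$, whence $\frac{\D^n\varphi}{\D T^n}(0)=n!\neq 0$, so $\varphi\not\equiv 0$ and no ``good $T_0$'' or perturbation step is needed at all. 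With the witness chosen as $\det R(T)$ (or an equivalent Gram determinant for $\{W_i(T)\}$) and the non-triviality obtained from these Taylor coefficients at $T=0$, your plan closes and coincides with the paper's argument.
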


To prove Theorem \ref{thm:unique_anyA},
we prepare
the following lemma, which can be proven in the same way with \cite[Lemma 2 and Theorem 1]{sato2022controllability} for the case $T\rightarrow \infty$, even though our analysis is conducted for finite $T>0$.

\begin{lemma}\label{lem:sufficient condition of uniqueness}
   Let $T>0$ be arbitrary. If $W(x,T)=O$ implies $x=0$, then the solution to \eqref{prob:unstable} is unique.
\end{lemma}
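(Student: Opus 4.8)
The plan is to derive uniqueness from \emph{strict} convexity: I will show that, under the hypothesis, the objective $h_T$ (whether $f_T$ or $g_T$) has a positive-definite Hessian at every point of the feasible region, so it admits at most one minimizer; existence is already secured by the convexity-plus-compactness argument recalled just before Proposition~\ref{Prop_convergence}. The first observation is that the hypothesis ``$W(x,T)=O$ implies $x=0$'' says exactly that the linear map $x\mapsto W(x,T)=\sum_{i=1}^n x_i W_i(T)$ is injective, i.e.\ $W_1(T),\ldots,W_n(T)$ are linearly independent as symmetric matrices. The constraint set $X_T\cap\Delta$ is convex, being the intersection of the convex set $X_T$ (a linear preimage of the positive-definite cone) with the simplex $\Delta$, so strict convexity will immediately give uniqueness.

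The heart of the argument is to evaluate the Hessian quadratic forms \eqref{Hessian_f} and \eqref{Hessian_g} and show they vanish only at the origin. Fix $p\in X_T$, set $W:=W(p,T)\succ O$, and for $v=(v_i)\in\Real^n$ write $W(v,T)=\sum_i v_i W_i(T)$ and $M:=W^{-1/2}W(v,T)W^{-1/2}$, which is symmetric. Pulling the sums inside the trace in \eqref{Hessian_f} and inserting $W^{\pm1/2}$ (all of which commute as functions of the symmetric positive-definite $W$) reduces the first form to
\begin{align}
v^\top \nabla^2 f_T(p)\,v
= \tr\!\paren{W^{-1}W(v,T)W^{-1}W(v,T)}
= \tr(M^2) = \|M\|_{\mathrm F}^2 \geq 0. \nonumber
\end{align}
Doing the same for \eqref{Hessian_g}, where the two trace terms coincide after relabeling and the identity $W^{1/2}W^{-1}W^{1/2}=I$ is used, yields
\begin{align}
v^\top \nabla^2 g_T(p)\,v
= 2\,\tr\!\paren{W^{-1}W(v,T)W^{-1}W(v,T)W^{-1}}
= 2\,\tr\!\paren{W^{-1}M^2}
= 2\,\|W^{-1/2}M\|_{\mathrm F}^2 \geq 0. \nonumber
\end{align}

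In each display the quadratic form vanishes if and only if $M=O$, which—since $W^{\pm1/2}$ are invertible—is equivalent to $W(v,T)=O$; by the injectivity hypothesis this forces $v=0$. Hence $\nabla^2 f_T(p)\succ O$ and $\nabla^2 g_T(p)\succ O$ at every $p\in X_T$, so both $f_T$ and $g_T$ are strictly convex on $X_T$ and, a fortiori, on $X_T\cap\Delta$. A strictly convex function has at most one minimizer over a convex set, which combined with the already-known existence of an optimizer establishes that \eqref{prob:unstable} has a unique solution.

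I expect the only nonroutine step to be the matrix-algebra collapse of the two Hessian forms into the squared Frobenius norms $\|M\|_{\mathrm F}^2$ and $\|W^{-1/2}M\|_{\mathrm F}^2$; this rests entirely on inserting the commuting factors $W^{\pm1/2}$ and simplifying via $W^{1/2}W^{-1}W^{1/2}=I$. Once that reduction is carried out, the linear-independence hypothesis translates verbatim into strict positivity of the Hessians, and strict convexity delivers uniqueness with no further work.
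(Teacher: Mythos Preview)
Your proposal is correct and follows essentially the same approach the paper points to: the paper defers to \cite[Lemma~2 and Theorem~1]{sato2022controllability}, whose argument is precisely to show that the Hessians \eqref{Hessian_f}--\eqref{Hessian_g} are positive definite on $X_T$ whenever $W_1(T),\dots,W_n(T)$ are linearly independent, and then conclude strict convexity and hence uniqueness. Your reduction of the two Hessian quadratic forms to $\|M\|_{\mathrm F}^2$ and $2\|W^{-1/2}M\|_{\mathrm F}^2$ via the $W^{\pm1/2}$ insertion is exactly the intended computation, and the passage from ``vanishing form $\Rightarrow M=O \Rightarrow W(v,T)=O \Rightarrow v=0$'' is the place where the hypothesis enters, as you identified.
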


Moreover, we recall the following lemma, which has been proven in \cite{mityagin2015zero}.

\begin{lemma} \label{lem:measure zero}
    Let $\varphi$ be a real analytic function on $\Real$. If $\varphi(x) \not\equiv 0$, then the Lebesgue measure of the zero set $\{x \in \Real \mid \varphi(x) = 0\}$ is $0$.
\end{lemma}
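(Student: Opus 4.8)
The plan is to prove the stronger structural fact that the zero set $Z := \{x \in \Real \mid \varphi(x) = 0\}$ is \emph{discrete}, from which measure zero follows immediately because discrete subsets of $\Real$ are countable. The engine is the identity theorem for real analytic functions, which I would establish directly from the local power-series representation rather than merely quoting it. The starting point is the local picture: since $\varphi$ is real analytic on $\Real$, for each $x_0 \in \Real$ there exist a radius $r > 0$ and coefficients $a_k$ with $\varphi(x) = \sum_{k\geq 0} a_k (x-x_0)^k$ for all $x$ with $|x - x_0| < r$.

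The first real step is to show that $Z$ has no accumulation point in $\Real$ unless $\varphi \equiv 0$. Suppose $x_0$ is an accumulation point of $Z$, so there are $x_n \in Z \setminus \{x_0\}$ with $x_n \to x_0$; continuity gives $\varphi(x_0) = 0$. I claim every Taylor coefficient $a_k$ at $x_0$ vanishes. If not, let $m$ be the least index with $a_m \neq 0$; then locally $\varphi(x) = (x - x_0)^m g(x)$ with $g$ analytic and $g(x_0) = a_m \neq 0$, so $g$ stays nonzero on a neighborhood of $x_0$, whence $\varphi(x) \neq 0$ for $0 < |x - x_0| < \delta$. This contradicts $x_n \to x_0$, so all $a_k = 0$ and $\varphi$ vanishes identically on $(x_0 - r, x_0 + r)$.

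Next I would propagate this globally using connectedness. Let $U$ be the set of points at which $\varphi$ vanishes on an entire neighborhood; $U$ is open by definition, and it is also closed, because any limit point $y$ of $U$ is an accumulation point of $Z$, so the previous step forces all Taylor coefficients of $\varphi$ at $y$ to vanish and hence $y \in U$. Since $\Real$ is connected, a nonempty clopen $U$ must be all of $\Real$, giving $\varphi \equiv 0$ and contradicting the hypothesis; therefore $Z$ has no accumulation point. Finally, a set with no accumulation point in $\Real$ is discrete, so for each $N \in \Nat$ the set $Z \cap [-N, N]$ is finite (an infinite subset of a compact interval would have an accumulation point by Bolzano--Weierstrass). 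Thus $Z = \bigcup_{N} (Z \cap [-N,N])$ is countable, and countable sets have Lebesgue measure zero.

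The main obstacle is the propagation step: converting the purely local conclusion ``$\varphi$ vanishes to infinite order at an accumulation point'' into the global statement ``$\varphi \equiv 0$''. This is precisely where connectedness of $\Real$ is indispensable, and the clopen argument for $U$ must be run with care, since its closedness is exactly what re-invokes the infinite-order-vanishing argument at each boundary point. Everything after that — discreteness, countability, and measure zero — is routine.
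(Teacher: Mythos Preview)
Your argument is correct and complete: the factorization $\varphi(x)=(x-x_0)^m g(x)$ with $g(x_0)\neq 0$ cleanly isolates zeros near any point, the clopen argument on the set $U$ is handled carefully (in particular you correctly re-invoke the infinite-order-vanishing step to show $U$ is closed), and the passage from discreteness to countability via Bolzano--Weierstrass on each $[-N,N]$ is sound.

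By contrast, the paper does not prove this lemma at all; it simply recalls the statement and cites \cite{mityagin2015zero} for the proof. So your contribution here is strictly more than what the paper supplies. It is worth noting that Mityagin's cited result is actually stated in greater generality (real analytic functions on $\Real^d$), where the zero set need not be discrete or countable and one must argue measure zero by other means (e.g., induction on dimension via Fubini, or slicing). Your discreteness route is the natural and most elementary one in the one-dimensional case needed here, and it gives a sharper structural conclusion (countable zero set) than mere measure zero; the trade-off is that it does not generalize directly to $d>1$, which is irrelevant for the paper's purposes since only the scalar variable $T$ is at stake.
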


Lemma \ref{lem:measure zero} means that an analytic function that is not identically zero is not zero at almost all $x \in \Real$.

\textit{Proof of Theorem \ref{thm:unique_anyA}:}
From Lemma \ref{lem:sufficient condition of uniqueness}, it is sufficient to show that for almost all $T>0$ and all $x=(x_i) \in \Real ^n$, $W(x, T) = O$ yields $x=0$.
Note that $W_i(T)$ is defined as
    $W_i(T) = \int _0^T P(t) e_i e_i^\top P(t)^\top \D t$
with $P(t) := \E^{At}$.
For $i = 1,2,\ldots,n$, $(i,i)$-th component of $W(x,T)$ is obtained as
\begin{align}
    \left(W(x,T)\right)_{ii}
    &= e_i^\top W(x,T)e_i\\
    &= \sum_{j=1}^n x_j \cdot \int_0^T \left( e_i^\top P(t) e_j \right)^2\D t\\
    &= \sum_{j=1}^n x_j \cdot \int_0^T  P_{ij}(t)^2\D t, \label{eq:ii-th component}
\end{align}
where $P_{ij}(t)$ denotes the $(i,j)$ element of $P(t)$.
Eq. \eqref{eq:ii-th component} implies that $W(x,T)=O$ yields
\begin{align}
R(T)x=0, \label{eq:R(T)x = 0}
\end{align}
where
\begin{align}
    R(T) := \int_0^T \begin{bmatrix}
    P_{11}(t)^2 & P_{12}(t)^2 &\cdots& P_{1n}(t)^2 \\
    P_{21}(t)^2 & P_{22}(t)^2 &\cdots & P_{2n}(t)^2\\
    \vdots & \vdots & \ddots & \vdots\\
    P_{n1}(t)^2 & P_{n2}(t)^2 & \cdots & P_{nn}(t)^2
    \end{bmatrix}
    \D t.
\end{align}
If $\det R(T) \neq 0$, \eqref{eq:R(T)x = 0} implies $x=0$.
Thus, in what follows, we show that $\det R(T) \neq 0$ for almost all $T>0$.

To this end, let $\varphi(T) := \det R(T)$.
Note that $\varphi(T)$ is an analytic function on $\Real$, because each element of $P(t) = \E^{At}$ is represented by the finite sum, difference or product of $t$ or exponential functions of $t$ or trigonometric functions of $t$.
Thus, if $\dfrac{\D^n \varphi}{\D T^n}(0) \neq 0$,
$\varphi(T)$ is not a zero function by
considering the Taylor expansion of $\varphi(T)$ at $T=0$.
Therefore, Lemma \ref{lem:measure zero} implies that Lebesgue measure of the zero set of $\varphi(T)$ is 0, that is, $\det R(T) \neq 0$ holds for almost all $T>0$.

To show that $\dfrac{\D^n \varphi}{\D T^n}(0) \neq 0$, we use the following properties of $R(T)$:
\begin{align}
    R(0) &= O,\label{eq:R(0) = O}\\
    \dfrac{\D R}{\D T} (0) &= \begin{bmatrix}
    P_{11}(0)^2 & P_{12}(0)^2 &\cdots& P_{1n}(0)^2 \\
    P_{21}(0)^2 & P_{22}(0)^2 & \cdots & P_{2n}(0)^2\\
    \vdots & \vdots & \ddots & \vdots\\
    P_{n1}(0)^2 & P_{n2}(0)^2 & \cdots & P_{nn}(0)^2
    \end{bmatrix}\\
    &=I,  \label{eq: D R(0) = I}
\end{align}
where \eqref{eq: D R(0) = I} follows from $P(0) = \E^{A\cdot 0} = I$.
By the definition of determinant, $\varphi(T)$ is represented as
    \begin{align}
        \varphi(T) = \sum_{\sigma \in S_n} \mathrm{sgn}(\sigma) \prod_{k=1}^n R_{k\sigma(k)}(T),
    \end{align}
    where $R_{ij}(t)$ denotes the $(i,j)$ element of $R(t)$.
    Because $\prod_{k=1}^n R_{k\sigma(k)}(T)$ is the product of $n$ elements of $R(T)$,
     $\dfrac{\D^n \varphi}{\D T^n}(0)$ leaves only terms where each $n$ element is differentiated exactly once, and the other terms vanish from \eqref{eq:R(0) = O}.
Notably, there are $n!$ ways for differentiating $n$ terms exactly once.
Thus, $\dfrac{\D^n \varphi}{\D T^n}(0)$ can be calculated as
    \begin{align}
        \dfrac{\D^n \varphi}{\D T^n}(0) 
        &= \sum_{\sigma \in S_n} \mathrm{sgn}(\sigma) n!\prod_{k=1}^n \dfrac{\D R_{k\sigma(k)}}{\D T} (0)\\
        &= \sum_{\sigma \in S_n} \mathrm{sgn}(\sigma) n!\prod_{k=1}^n I _{k\sigma(k)} \label{eq:D^n varphi(0)}\\
        &= n!\cdot \det I
        = n!\neq 0,    \end{align}
where \eqref{eq:D^n varphi(0)} follows from $\dfrac{\D R}{\D T}(0) = I$.
Thus, as mentioned earlier, $\det R(T) \neq 0$ holds for almost all $T>0$. \qed

By Theorem \ref{thm:unique_anyA}, VCS and AECS can be used as centrality measures for network systems. The uniqueness of these measures is crucial for ensuring interpretability, comparability, and reproducibility across different researchers.

Note that we cannot replace ``almost all $T$" in Theorem \ref{thm:unique_anyA} with ``all $T$",
because there is an example where a solution to FTCSP \eqref{prob:unstable} is not unique, as shown in \cite[Section IV]{sato2022controllability}. 
The intuition behind the non-uniqueness of the solution for certain values of $T$ is linked to the spectral properties of the matrix $A$. Specifically, when 
$A$ has eigenvalues on the imaginary axis, the system exhibits marginally stable or oscillatory behavior. For certain time horizons
$T$, this oscillatory behavior can cause the controllability Gramian to become insensitive with respect to variations in the input weight distribution
$p$. In other words, different distributions of $p$ can result in the same value of the objective function for VCS or AECS. This flatness in the optimization landscape leads to multiple solutions that all yield the same optimal performance, thereby explaining the observed non-uniqueness.

%%%%%%%%%%%%%%%%%%%%
\subsection{VCS and AECS in special cases}
\label{sec:special}
We show 
a class of system \eqref{system0} that yields differences between VCS and AECS.
That is, we resolve (ii), as mentioned in Section \ref{Subsec_limitation}.
Moreover, we clarify a class of system \eqref{system0} that yields the same results for VCS and AECS.

To this end, we prepare the following.

\begin{lemma} \label{Lem_sufficient}
     Let $p^*\in X_T\cap \Delta$.
    If $\nabla h_T(p^*) = k{\bf 1}$ for some $k\in {\bb R}$, $p^*$ is an optimal solution to FTCSP \eqref{prob:unstable}, yielding VCS when $h_T=f_T$ and AECS when $h_T=g_T$.
\end{lemma}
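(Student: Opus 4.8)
The plan is to exploit the convexity of FTCSP \eqref{prob:unstable}: for a convex objective over a convex set, a point is globally optimal if and only if the first-order optimality (stationarity) condition holds there. Since Problem \eqref{prob:unstable} is already known to be convex (from the Hessian formulas \eqref{Hessian_f}, \eqref{Hessian_g} and the discussion following them) and to admit an optimal solution, it suffices to verify that $p^*$ satisfies the variational inequality $\nabla h_T(p^*)^\top (p - p^*) \geq 0$ for every feasible $p \in X_T \cap \Delta$.

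\textbf{Key steps.} First I would fix an arbitrary feasible $p \in X_T \cap \Delta$ and compute $\nabla h_T(p^*)^\top (p - p^*)$ under the hypothesis $\nabla h_T(p^*) = k{\bf 1}$. This gives $k\,{\bf 1}^\top(p - p^*) = k\left(\sum_i p_i - \sum_i p^*_i\right) = k(1 - 1) = 0$, using that both $p$ and $p^*$ lie in the simplex $\Delta$ and hence have components summing to $1$. So the variational inequality holds with equality for all feasible $p$. Second, I would invoke the standard first-order characterization of minimizers of a differentiable convex function over a convex set: since $h_T$ is convex and differentiable on $X_T$ (an open set containing the relevant feasible points), and $X_T \cap \Delta$ is convex, the condition $\nabla h_T(p^*)^\top(p - p^*) \geq 0$ for all feasible $p$ implies $h_T(p) \geq h_T(p^*) + \nabla h_T(p^*)^\top(p-p^*) \geq h_T(p^*)$, i.e., $p^*$ is a global minimizer. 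Finally, the naming convention is immediate: by the definitions in \eqref{def_fT}, \eqref{def_gT}, if $h_T = f_T$ the components $p^*_i$ are by definition the VCS, and if $h_T = g_T$ they are the AECS.

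\textbf{Main obstacle.} There is essentially no hard obstacle here; this is a short and routine argument once one recognizes it as the sufficiency direction of the KKT/first-order conditions for convex programs. The only points requiring minor care are: (a) confirming $h_T$ is genuinely differentiable and convex at $p^*$ — but this is guaranteed since $p^* \in X_T$ where $W(p^*,T) \succ O$, so the Gramian inverse and hence $f_T, g_T$ and their derivatives are well-defined and smooth, and convexity follows from the positive semidefiniteness of the Hessians \eqref{Hessian_f}, \eqref{Hessian_g}; and (b) ensuring the displacement $p - p^*$ stays within the feasible set so that the convexity inequality can be applied along the segment — this holds because $X_T \cap \Delta$ is convex, so the whole segment $[p^*, p]$ is feasible. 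With these observations in place the proof is a two-line computation plus a citation of the convex first-order optimality principle.
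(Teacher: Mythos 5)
Your proposal is correct and follows essentially the same route as the paper: both verify the variational inequality $\nabla h_T(p^*)^\top(p-p^*)\geq 0$ by noting it holds with equality since components of $p$ and $p^*$ each sum to $1$, and then invoke first-order optimality for convex problems (the paper cites the Bertsekas characterization, while you spell out the sufficiency direction via the gradient inequality — a negligible difference). No gaps.
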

\begin{proof}
According to \cite[Proposition 3.1.1]{bertsekas2016nonlinear},
$p^*$ is an optimal solution to convex optimization problem \eqref{prob:unstable} if and only if
\begin{align}
\nabla h_T(p^*)^\top (p - p^*) \geq 0 \label{daiji}
\end{align}
 for all $p\in X_T\cap \Delta$.
If $\nabla h_T(p^*) = k{\bf 1}$ for some $k\in {\bb R}$,
\begin{align*}
    \nabla h_T(p^*)^\top (p - p^*) &= k{\bf 1}^\top (p-p^*)\\
    &=k\sum_{i=1}^n (p_i-p_i^*) = k(1-1) =0
\end{align*}
 for all $p\in X_T\cap \Delta$.
Thus, \eqref{daiji} holds. \qed
\end{proof}

Using Lemma \ref{Lem_sufficient}, we obtain the following.

\begin{theorem} \label{Thm_symmetric}
    If $A$ in \eqref{eq:lti} is symmetric, i.e., $A^\top = A$, then ${\bf 1}/n$ is an optimal solution to FTCSP \eqref{prob:unstable} with $h_T(p)=f_T(p)$ for all $T>0$.
\end{theorem}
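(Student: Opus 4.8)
The plan is to apply Lemma \ref{Lem_sufficient} with $p^* = \mathbf{1}/n$, so everything reduces to showing that $\nabla f_T(\mathbf{1}/n)$ is a scalar multiple of $\mathbf{1}$. By \eqref{gradf}, the $i$-th component of this gradient is $-\operatorname{tr}(W(\mathbf{1}/n,T)^{-1} W_i(T))$, so I must show this quantity is independent of $i$. First I would observe that when $A^\top = A$, formula \eqref{eq:finite_time_horizon_gramian} becomes $W_i(T) = \int_0^T e^{At} e_i e_i^\top e^{At}\,\D t$, and $W(\mathbf{1}/n,T) = \frac1n \sum_i W_i(T) = \frac1n \int_0^T e^{At}\left(\sum_i e_i e_i^\top\right) e^{At}\,\D t = \frac1n \int_0^T e^{2At}\,\D t =: \frac1n M$. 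The key point is that $W(\mathbf{1}/n,T)^{-1}$ is a polynomial in $A$ (equivalently, commutes with $e^{At}$ for all $t$), because $M$ is a function of $A$.

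Next, writing $N := W(\mathbf{1}/n,T)^{-1}$, which commutes with $e^{At}$, I compute
\begin{align*}
\operatorname{tr}(N W_i(T)) = \operatorname{tr}\!\left(N \int_0^T e^{At} e_i e_i^\top e^{At}\,\D t\right) = \int_0^T e_i^\top e^{At} N e^{At} e_i\,\D t = e_i^\top\!\left(\int_0^T e^{At} N e^{At}\,\D t\right) e_i.
\end{align*}
Call the bracketed matrix $Q$; it is symmetric (since $A$, $N$, and hence the integrand are symmetric), so $\operatorname{tr}(N W_i(T)) = Q_{ii}$. To finish I need $Q_{ii}$ independent of $i$. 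Since $N$ commutes with every $e^{As}$, $Q = \int_0^T e^{2At} N\,\D t = \left(\int_0^T e^{2At}\,\D t\right) N = M N = M\cdot (M/n)^{-1} = n I$. Hence $Q_{ii} = n$ for all $i$, so $(\nabla f_T(\mathbf{1}/n))_i = -n$ for every $i$, i.e. $\nabla f_T(\mathbf{1}/n) = -n\,\mathbf{1}$. Lemma \ref{Lem_sufficient} with $k = -n$ then gives that $\mathbf{1}/n$ is optimal, provided $\mathbf{1}/n \in X_T \cap \Delta$; membership in $\Delta$ is immediate, and $\mathbf{1}/n \in X_T$ holds because $W(\mathbf{1}/n,T) = M/n \succ O$ (the pair $(A, I)$ is trivially controllable, as $\operatorname{diag}(\sqrt{1/n},\dots,\sqrt{1/n})$ is nonsingular, so the Gramian is positive definite).

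The main obstacle is the clean bookkeeping around the commutation argument: making sure that $W(\mathbf{1}/n,T)^{-1}$ genuinely commutes with $e^{At}$ (this follows because $W(\mathbf{1}/n,T)$ is itself a function of $A$, being $\frac1n\int_0^T e^{2At}\D t$, and functions of $A$ form a commutative algebra containing the inverse whenever it exists), and then pushing the scalar $N = M/n$-inverse through the integral to collapse $Q$ to $nI$. Everything else — symmetry of the matrices involved, positive definiteness of the Gramian, the reduction via Lemma \ref{Lem_sufficient} — is routine. One subtlety worth a sentence in the write-up: the identity $\sum_i e_i e_i^\top = I$ is what makes the symmetric case special, and it is exactly this step that fails for general (or skew-symmetric) $A$ unless one also uses $e^{A^\top t}$ properly; in the skew-symmetric case $e^{A^\top t} = e^{-At} = (e^{At})^{-1}$, which is the structural fact underlying the companion claim that VCS and AECS coincide there.
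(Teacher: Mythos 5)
Your proposal is correct and follows essentially the same route as the paper: both reduce the claim to Lemma \ref{Lem_sufficient} and verify $\nabla f_T(\mathbf{1}/n) = -n\,\mathbf{1}$. The only difference is presentational—the paper carries out the trace computation through the explicit orthogonal diagonalization $A = UDU^\top$, whereas you collapse it via the observation that $W(\mathbf{1}/n,T) = \tfrac1n\int_0^T \e^{2At}\,\D t$ is a function of $A$ and hence commutes with $\e^{At}$; both arguments rest on exactly the same structural fact and yield the same value $-n$ for each gradient component.
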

\begin{proof}
We show that $\nabla f_T({\bf 1}/n)= -n{\bf 1}$ with $h_T(p)=f_T(p)$ for all $T>0$, because in this case, Lemma \ref{Lem_sufficient} guarantees that ${\bf 1}/n$ is an optimal solution to FTCSP \eqref{prob:unstable} with $h_T(p)=f_T(p)$ for all $T>0$.
To this end, we note that there exist an orthogonal matrix $U$ and a real diagonal matrix $D$ such that $A=UDU^\top$ due to the symmetry of $A$.

Because $\nabla f_T({\bf 1}/n)$ is given by \eqref{gradf} with $p={\bf 1}/n$,
we calculate
  \begin{align*}
    W({\bf 1}/n,T) &= \int_0^T \E^{At} \cdot \dfrac{1}{n}I\cdot \E^{A^\top t}\D t\\
    &=\dfrac{1}{n}U\int_0^T\E^{Dt}U^\top U\E^{Dt}\D t U^\top\\
    &=\dfrac{1}{n}U\left(\int_0^T\E^{2Dt}\D t\right) U^\top
    = \dfrac{1}{n}UFU^\top,
  \end{align*}
where $F=\int_0^T \E^{2Dt}\D t$ is an invertible diagonal matrix.
Thus,
  \begin{align*}
   & (\nabla f_T({\bf 1}/n))_i \\
    &= -n\cdot \tr \left(UF^{-1}U^\top W_i(T)\right)\\
    &= -n\cdot \tr \left(UF^{-1}U^\top \int_0^T U\e^{Dt}U^\top e_i e_i^\top U\e^{Dt}U^\top \diff t\right)\\
    &= -n\cdot \int_0^T\tr \left(UF^{-1}\e^{Dt}U^\top e_i e_i^\top U\e^{Dt}U^\top \right)\diff t\\
    &= -n\cdot \int_0^T\tr \left(U\e^{Dt}U^\top UF^{-1}\e^{Dt}U^\top e_i e_i^\top  \right)\diff t\\
    &= -n\cdot \tr \left(U\int_0^T\e^{Dt}F^{-1}\e^{Dt}\diff tU^\top e_i e_i^\top  \right)\\
    &= -n\cdot \tr \left(UF^{-1}\int_0^T\e^{2Dt}\diff tU^\top e_i e_i^\top  \right)\\
    &= -n\cdot \tr \left(UF^{-1}FU^\top e_i e_i^\top  \right)\\
    &= -n\cdot \tr \left(e_i e_i^\top  \right)= -n.
  \end{align*}
  This completes the proof. \qed
\end{proof}

According to Theorem \ref{Thm_symmetric}, if $A$ is close to being symmetric, we expect each node's VCS to lie closer to the uniform value ${\bf 1}/n$ than its corresponding AECS value.

Proposition \ref{Prop_Laplacian} and Theorem \ref{Thm_symmetric} imply the following.

\begin{corollary} \label{Cor_Laplacian}
    Let $L$ be a graph Laplacian matrix corresponding to an undirected connected graph with $n$ nodes. Then, for $A=-L$,
    ${\bf 1}/n$ is the unique solution to
FTCSP \eqref{prob:unstable} with $h_T(p)=f_T(p)$ for all $T>0$.
\end{corollary}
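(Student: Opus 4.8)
The plan is to combine the two facts we already have in hand: Proposition~\ref{Prop_Laplacian} and Theorem~\ref{Thm_symmetric}. For $A=-L$ with $L$ the Laplacian of an undirected connected graph, $L$ is symmetric, hence so is $A=-L$. Therefore Theorem~\ref{Thm_symmetric} applies and tells us that ${\bf 1}/n$ is \emph{an} optimal solution to FTCSP~\eqref{prob:unstable} with $h_T(p)=f_T(p)$, for every $T>0$. Separately, Proposition~\ref{Prop_Laplacian} applies to exactly this $A=-L$ and tells us that FTCSP~\eqref{prob:unstable} admits a \emph{unique} optimal solution for all $T>0$ (for both choices of $h_T$, in particular for $h_T=f_T$).

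The argument is then a one-line combination: an optimal solution exists and is unique (Proposition~\ref{Prop_Laplacian}), and ${\bf 1}/n$ is an optimal solution (Theorem~\ref{Thm_symmetric}); hence ${\bf 1}/n$ must be \emph{the} optimal solution, i.e.\ the unique solution to FTCSP~\eqref{prob:unstable} with $h_T(p)=f_T(p)$ for all $T>0$. I would also note the small bookkeeping point that ${\bf 1}/n$ genuinely lies in the feasible set $X_T\cap\Delta$: it clearly lies in $\Delta$, and it lies in $X_T$ because $W({\bf 1}/n,T)=\tfrac1n U F U^\top\succ O$ (as computed in the proof of Theorem~\ref{Thm_symmetric}, with $F$ an invertible diagonal matrix whose entries $\int_0^T e^{2d_i t}\,dt$ are strictly positive), so that $f_T$ is defined there and the claim that ${\bf 1}/n$ is optimal is meaningful.

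There is essentially no obstacle here — the corollary is a genuine corollary — but the one thing worth being careful about is making sure the hypotheses of the two invoked results match. Proposition~\ref{Prop_Laplacian} is stated for a Laplacian of an undirected \emph{connected} graph, which is exactly the hypothesis of the corollary, so the uniqueness half is immediate. Theorem~\ref{Thm_symmetric} only needs symmetry of $A$, which holds since $L^\top=L$ implies $(-L)^\top=-L$; connectedness is not even needed for that half. So the two ingredients slot together cleanly, and the proof is: invoke Theorem~\ref{Thm_symmetric} for existence of ${\bf 1}/n$ as an optimizer, invoke Proposition~\ref{Prop_Laplacian} for uniqueness of the optimizer, and conclude that they coincide.
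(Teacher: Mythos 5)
Your proposal is correct and matches the paper's own argument: the corollary is stated immediately after the remark that ``Proposition~\ref{Prop_Laplacian} and Theorem~\ref{Thm_symmetric} imply the following,'' i.e.\ uniqueness from Proposition~\ref{Prop_Laplacian} combined with optimality of ${\bf 1}/n$ from Theorem~\ref{Thm_symmetric}. Your extra check that ${\bf 1}/n\in X_T\cap\Delta$ is a harmless (and reasonable) addition to the same one-line combination.
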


Corollary \ref{Cor_Laplacian} states that for symmetric Laplacian dynamics on an undirected graph with $n$ nodes, the VCS of each state node is uniformly $1/n$.

\begin{table}[t]
\centering
\caption{VCS and AECS when $A$ is symmetric or skew-symmetric.}
\label{table0}
\begin{tabular}{|c|c|c|}
\hline
$A$ & symmetric & skew-symmetric \\ \hline
VCS & $={\bf 1}/n$ & $={\bf 1}/n$ \\ 
AECS & $\neq {\bf 1}/n$ & $={\bf 1}/n$  \\ \hline
\end{tabular}
\end{table}

\begin{figure}[t]
    \centering
    \includegraphics[width=3cm]{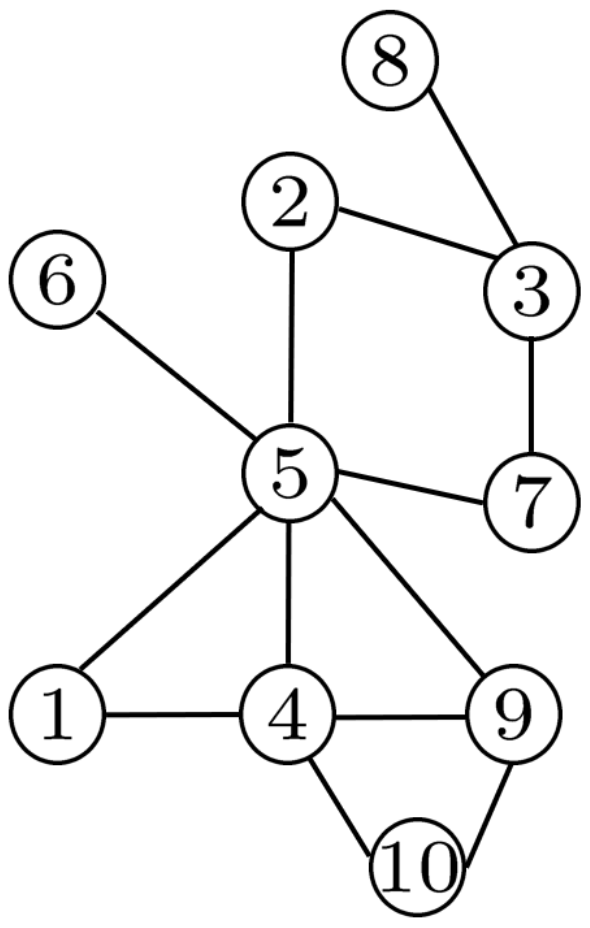}
    \caption{Connected undirected Network. State node \textcircled{\scriptsize 5}, which has degree 6, is expected to exhibit greater controllability than state nodes \textcircled{\scriptsize 6} and \textcircled{\scriptsize 8}, which have degree 1.}
    \label{fig:undirected}
\end{figure}

\begin{table}[t]
\caption{VCS and AECS for the undirected network system illustrated in Fig. \ref{fig:undirected}.}
\label{table1}
\centering
\begin{tabular}{c||c|c|c|c|c}
Node $i$ & VCS & AECS \\ \hline
1 &   0.1000& 0.0803 \\
2 &     0.1000&    0.1096 \\
3 &     0.1000&    0.1185\\
4 &     0.1000&    0.1308 \\
5 &     0.1000&    0.1613\\
6 &     0.1000&    0.0813\\
7 &     0.1000&    0.0831 \\
8 &     0.1000&    0.0466\\
9 &     0.1000&    0.1088 \\
10 &    0.1000&     0.0799
\end{tabular}
\end{table}

Moreover, we obtain the following.

\begin{theorem} \label{Thm_skew}
    If $A$ in \eqref{eq:lti} is skew-symmetric, i.e., $A^\top = -A$, then ${\bf 1}/n$ is an optimal solution to FTCSP \eqref{prob:unstable} for all $T>0$.
\end{theorem}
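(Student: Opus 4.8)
The plan is to mirror the proof of Theorem \ref{Thm_symmetric}, using Lemma \ref{Lem_sufficient} to reduce everything to showing that the gradient of the objective at the uniform point is a scalar multiple of $\mathbf{1}$. Since Theorem \ref{Thm_skew} asserts optimality for \emph{both} choices $h_T = f_T$ and $h_T = g_T$, I would establish that $W(\mathbf{1}/n, T)$ is a scalar multiple of the identity; once that is in hand, both gradient formulas \eqref{gradf} and \eqref{gradg} collapse to something proportional to $(\tr(e_i e_i^\top))_i = \mathbf{1}$, handling VCS and AECS simultaneously.

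The key computation is as follows. When $A^\top = -A$, the matrix exponential $\mathrm{e}^{At}$ is orthogonal for every $t$, since $(\mathrm{e}^{At})^\top \mathrm{e}^{At} = \mathrm{e}^{A^\top t}\mathrm{e}^{At} = \mathrm{e}^{-At}\mathrm{e}^{At} = I$. Therefore
\begin{align*}
W(\mathbf{1}/n, T) = \int_0^T \mathrm{e}^{At}\cdot \frac{1}{n}I\cdot \mathrm{e}^{A^\top t}\,\D t = \frac{1}{n}\int_0^T \mathrm{e}^{At}\mathrm{e}^{-At}\,\D t = \frac{1}{n}\int_0^T I\,\D t = \frac{T}{n}I.
\end{align*}
Hence $W(\mathbf{1}/n,T)^{-1} = (n/T)I$, which in particular shows $\mathbf{1}/n \in X_T$. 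Substituting into \eqref{gradf} gives $(\nabla f_T(\mathbf{1}/n))_i = -(n/T)\tr(W_i(T))$, and by the same orthogonality $\tr(W_i(T)) = \int_0^T \tr(\mathrm{e}^{At}e_i e_i^\top \mathrm{e}^{A^\top t})\,\D t = \int_0^T e_i^\top \mathrm{e}^{A^\top t}\mathrm{e}^{At} e_i\,\D t = \int_0^T 1\,\D t = T$, so $(\nabla f_T(\mathbf{1}/n))_i = -n$ for every $i$; that is, $\nabla f_T(\mathbf{1}/n) = -n\mathbf{1}$. Similarly, \eqref{gradg} gives $(\nabla g_T(\mathbf{1}/n))_i = -(n/T)^2\tr(W_i(T)) = -n^2/T$, so $\nabla g_T(\mathbf{1}/n) = -(n^2/T)\mathbf{1}$. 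In both cases the gradient is a scalar multiple of $\mathbf{1}$, so Lemma \ref{Lem_sufficient} (with $k = -n$ and $k = -n^2/T$ respectively) yields that $\mathbf{1}/n$ is optimal for FTCSP \eqref{prob:unstable} for all $T>0$.

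I do not anticipate a genuine obstacle here: the argument is essentially a one-line observation (orthogonality of $\mathrm{e}^{At}$) followed by routine trace manipulations, and it is actually cleaner than the symmetric case because $W(\mathbf{1}/n,T)$ is literally a multiple of $I$ rather than merely diagonalized by an orthogonal change of basis. The only point requiring a touch of care is to confirm $\mathbf{1}/n \in X_T \cap \Delta$ so that Lemma \ref{Lem_sufficient} applies — but this is immediate from $W(\mathbf{1}/n,T) = (T/n)I \succ O$. If one wanted to streamline further, one could state and prove a single lemma of the form ``if $W(\mathbf{1}/n,T) = cI$ for some $c>0$ then $\mathbf{1}/n$ is optimal for both $f_T$ and $g_T$'' and invoke it, but the direct computation above is short enough that inlining it is preferable.
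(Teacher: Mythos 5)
Your proposal is correct and follows essentially the same route as the paper: both proofs compute $W(\mathbf{1}/n,T) = (T/n)I$ from the orthogonality $\mathrm{e}^{A^\top t}\mathrm{e}^{At} = I$, evaluate the gradients \eqref{gradf} and \eqref{gradg} at $\mathbf{1}/n$ using $\tr(W_i(T)) = T$, and conclude via Lemma \ref{Lem_sufficient}. Your explicit check that $\mathbf{1}/n \in X_T \cap \Delta$ is a small but welcome addition that the paper leaves implicit.
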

\begin{proof}
Because $\nabla f_T({\bf 1}/n)$ and $\nabla g_T({\bf 1}/n)$ are given by \eqref{gradf} and \eqref{gradg} with $p={\bf 1}/n$, respectively,
we calculate
\begin{align*}
    W({\bf 1}/n,T)
    &= \dfrac{1}{n}\int_0^T \e^{At}\e^{A^\top t}\diff t\\
    &= \dfrac{1}{n}\int_0^T \e^{At}\e^{-A t}\diff t= \dfrac{T}{n}I.
  \end{align*}
Thus, $\nabla f_T({\bf 1}/n)$ is given by
\begin{align*}
    (\nabla f_T({\bf 1}/n))_i
    &=-\dfrac{n}{T}\cdot \tr\left(\int_0^T \e^{At}e_ie_i^\top \e^{A^\top t}\diff t\right)\\
    &=-\dfrac{n}{T}\cdot \tr\left(\int_0^T \e^{A^\top t}\e^{At} \diff t e_ie_i^\top \right)\\
    &=-\dfrac{n}{T}\cdot \tr\left(T\cdot I e_ie_i^\top \right)=-n.
    \end{align*}
    Moreover, $\nabla g_T({\bf 1}/n)$ is given by
    \begin{align*}
(\nabla g_T({\bf 1}/n))_i          
    =-\dfrac{n^2}{T^2}\cdot \tr(W_i(T))=-\dfrac{n^2}{T}.
  \end{align*}
  Here, the second equality follows from
       $\tr(W_i(T)) = \int_0^T \tr (e^{At}e_ie_i^\top e^{A^\top t}) \D t = \int_0^T \tr (e_i e_i^\top) \D t=T$.
  This completes the proof. \qed
\end{proof}

Theorem \ref{Thm_skew} means that
for systems of the form \eqref{system0} with skew-symmetric $A$, 
both VCS and AECS may yield a uniform value.
This uniformity indicates that, in such an idealized setting, the two measures do not offer additional distinguishing insights into the system’s behavior. It is important to emphasize that a strictly skew-symmetric $A$ corresponds to an energy-preserving system—a scenario that is rarely encountered in practice due to the presence of damping and external disturbances. Therefore, the coincidence of VCS and AECS under strict skew-symmetry represents a theoretical special case rather than a realistic scenario. In most practical applications, where $A$ deviates from being strictly skew-symmetric, VCS and AECS are expected to provide distinct and informative evaluations that better reflect the system's true dynamics.

Although Theorems \ref{Thm_symmetric} and \ref{Thm_skew} do not guarantee the uniqueness of VCS and AECS, combining them with Theorem \ref{thm:unique_anyA} ensures the uniqueness for almost all $T>0$,
as shown in Table \ref{table0}.

 Note that even if
$A$ is symmetric, the AECS for each state node is not uniformly equal to $1/n$. 
In fact,  consider the
Laplacian dynamics of the form \eqref{system0} with $A=-L$
on the undirected graph in Fig. \ref{fig:undirected}, 
where $L$ is a graph Laplacian with uniform edge weights.
Then,
Algorithm \ref{alg:projgrad} for $T=1000$ produced VCS and AECS results as shown in Table \ref{table1}. Each state node had a VCS value of $1/10$, in accordance with Corollary \ref{Cor_Laplacian}. In contrast, AECS analysis differentiates nodes based on their degrees. Specifically, nodes with higher degrees have larger AECS values, highlighting their pivotal roles in controlling the network's dynamics. Conversely, node \textcircled{\scriptsize 8}, with a lower degree, has a smaller AECS, indicating the correlation between a node's degree and its network controllability.

Therefore, there exists a class of systems for which VCS and AECS yield different results. We further explore this fact in Section \ref{Sec_numerical}.

%%%%%%%%%%%%
\subsection{Effect of the terminal time
$T$} \label{Sec_effect_T}

We investigate the effect of the terminal time
$T$ on the optimal solution to FTCSP \eqref{prob:unstable}.
That is, we partially resolve (iii), as mentioned in Section \ref{Subsec_limitation}. 
To this end, we define 
    $Z_i(t):= \exp(At) e_ie_i^\top \exp(A^\top t)$.
Then, the finite time controllability Gramian $W_i(T)$ in \eqref{eq:finite_time_horizon_gramian} can be expressed as
\begin{align}
    W_i(T) = \int_0^T Z_i(t) \D t. \label{W_Z}
\end{align}

%%%%%%%
\subsubsection{When $T$ is close to zero}

The following lemma is important for analyzing the effect of the terminal time $T$.

\begin{lemma} \label{Lemma_T_small}
For the finite time controllability Gramian $W(p,T)$ in \eqref{Def_Wc}, there exists $\delta >0$ such that for all $T\in [0,\delta]$,
\begin{align}
    W(p,T) = T{\rm diag}(p_1,\ldots, p_n) + O(T^2). \label{W_another}
\end{align}
\end{lemma}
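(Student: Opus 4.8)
The plan is to compute the Taylor expansion of $W(p,T)$ in $T$ around $T=0$ directly from the integral representation \eqref{W_Z}. Since $W(p,T)=\sum_{i=1}^n p_i W_i(T)$ with $W_i(T)=\int_0^T Z_i(t)\,\D t$ and $Z_i(t)=\exp(At)e_ie_i^\top\exp(A^\top t)$, everything reduces to understanding $Z_i(t)$ for small $t$. The map $t\mapsto Z_i(t)$ is real analytic (indeed entire) because $\exp(At)$ is, so each $W_i(T)$ is real analytic in $T$ and we may expand freely.

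First I would expand the matrix exponential: $\exp(At) = I + At + O(t^2)$, hence $\exp(A^\top t) = I + A^\top t + O(t^2)$, which gives
\begin{align*}
Z_i(t) = \paren{I + At + O(t^2)} e_ie_i^\top \paren{I + A^\top t + O(t^2)} = e_ie_i^\top + O(t).
\end{align*}
Integrating over $[0,T]$ yields $W_i(T) = \int_0^T e_ie_i^\top \,\D t + \int_0^T O(t)\,\D t = T\, e_ie_i^\top + O(T^2)$. Summing with weights,
\begin{align*}
W(p,T) = \sum_{i=1}^n p_i\paren{T\,e_ie_i^\top + O(T^2)} = T\sum_{i=1}^n p_i\, e_ie_i^\top + O(T^2) = T\,{\rm diag}(p_1,\ldots,p_n) + O(T^2),
\end{align*}
using $\sum_{i=1}^n p_i\, e_ie_i^\top = {\rm diag}(p_1,\ldots,p_n)$. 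This establishes \eqref{W_another}; the existence of a single $\delta>0$ valid for all $T\in[0,\delta]$ follows because the remainder terms, being convergent power series in $T$ with no constant or linear term, are uniformly $O(T^2)$ on any bounded interval around the origin (e.g.\ $[0,1]$), so one may simply take $\delta=1$, or any positive number.

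The only points requiring a little care—rather than genuine obstacles—are making the $O(T^2)$ notation precise and uniform. I would be explicit that "$O(T^2)$" denotes a matrix-valued function $E(T)$ with $\|E(T)\| \le C T^2$ for $T\in[0,\delta]$, where $C$ may depend on $A$ (through bounds on $\|A\|$ and $\|\exp(At)\|$ on the compact interval) but not on $T$; since $p\in\Delta$ has bounded entries, the constant can also be taken uniform in $p$. A clean way to control the remainder without tracking constants is to write $Z_i(t) = e_ie_i^\top + t\,H_i(t)$ with $H_i(t):=\frac{1}{t}\paren{Z_i(t)-e_ie_i^\top}$ extended continuously to $t=0$ by $H_i(0)=Ae_ie_i^\top + e_ie_i^\top A^\top$ (which is finite by analyticity), so $\|H_i\|$ is bounded on $[0,\delta]$ by a constant $M_i$, and then $\|W_i(T)-T e_ie_i^\top\| = \|\int_0^T t H_i(t)\,\D t\| \le M_i T^2/2$. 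No single step here is hard; the "main obstacle" is purely expository—ensuring the reader sees that the implied constant is independent of $T$ (and, if desired, of $p$) so that the statement "for all $T\in[0,\delta]$" is justified.
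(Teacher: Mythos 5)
Your proof is correct and follows essentially the same route as the paper: Taylor-expand $Z_i(t)=\exp(At)e_ie_i^\top\exp(A^\top t)$ at $t=0$, integrate over $[0,T]$, and sum with the weights $p_i$ (using $p\in\Delta$ to make the constant uniform). The only difference is cosmetic and in your favor: you stop at a first-order remainder controlled by continuity of $H_i(t)=t^{-1}\bigl(Z_i(t)-e_ie_i^\top\bigr)$ on a compact interval, whereas the paper expands to second order with an integral remainder and invokes Fubini to bound it, so your argument is a slightly leaner version of the same idea.
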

\begin{proof}
Since $Z_i$ is analytic, the Taylor's theorem implies 
\begin{align*}
    Z_i(t) = Z_i(0) +\frac{\D Z_i}{\D t}(0) t + \int_0^t \frac{\D^2 Z_i}{\D t^2}(\tau) (t-\tau) \D \tau.
\end{align*}
Since \eqref{W_Z} holds,
\begin{align}
    W_i(T) =& Z_i(0)T + \frac{1}{2}\frac{\D Z_i}{\D t}(0)T^2 \nonumber\\ 
    &+ \int_0^T\int_0^t \frac{\D^2 Z_i}{\D t^2}(\tau) (t-\tau) \D \tau \D t. \label{eq_Wi_int}
\end{align}
Because the integrand $\frac{\D^2 Z_i}{\D t^2}(\tau) (t-\tau)$ of the third term is integrable, the Fubini's theorem guarantees that we can interchange the order of integration. Thus,
\begin{align}
  & \int_0^T\int_0^t \frac{\D^2 Z_i}{\D t^2}(\tau) (t-\tau) \D \tau \D t \nonumber\\
  =& \int_0^T \frac{\D^2 Z_i}{\D t^2}(\tau) \left( \int_{\tau}^T (t-\tau) \D t\right) \D \tau \nonumber\\
  =& \frac{1}{2}\int_0^T \frac{\D^2 Z_i}{\D t^2}(\tau)(T-\tau)^2 \D \tau. \label{eq_double_int}
\end{align}
By substituting \eqref{eq_double_int} into \eqref{eq_Wi_int}, we obtain
\begin{align}
\|W_i(T) - Z_i(0)T\| \leq& \frac{1}{2}\left\|\frac{\D Z_i}{\D t}(0)\right\|T^2 \label{W_i(T)_expression0}\\
&+ \frac{1}{2}\int_0^T\left\|\frac{\D^2 Z_i}{\D t^2}(\tau)\right\|(T-\tau)^2 \D \tau. \nonumber
\end{align}
Since $\frac{\D^2 Z_i}{\D t^2}(t)$ is continuous in $t$,
for each $i\in \{1,\ldots, n\}$, there exists $\delta_i>0$ such that $\|\frac{\D^2 Z_i}{\D t^2}(t)\| \leq 2\|\frac{\D^2 Z_i}{\D t^2}(0)\|$ for all $t\in [0,\delta_i]$, where $\|\cdot\|$ denotes any matrix norm.
Define
\begin{align*}
    \delta:= \min_{i\in\{1,\ldots, n\}} \delta_i,\quad M:=\max_{i\in\{1,\ldots, n\}} \left\|\frac{\D^2 Z_i}{\D t^2}(0) \right\|.
\end{align*}
Then, \eqref{W_i(T)_expression0} implies that for any $T\in [0,\delta]$,
\begin{align}
\|W_i(T) - Z_i(0)T\| &\leq \frac{1}{2}\left\|\frac{\D Z_i}{\D t}(0)\right\|T^2 + \frac{1}{3}M T^3 \nonumber \\ 
&\leq C T^2, \label{W_i(T)_expression}
\end{align}
where $C:=\frac{1}{2}\left\|\frac{\D Z_i}{\D t}(0)\right\| + \frac{1}{3}M\delta$.
Because $Z_i(0)=e_ie_i^\top$ and
    $\|W(p,t) -T{\rm diag}(p_1,\ldots, p_n)\|\leq \sum_{i=1}^n |p_i| \|W_i(T)-Z_i(0)T\|$,
\eqref{W_i(T)_expression} yields \eqref{W_another}. \qed
\end{proof}

Based on Lemma \ref{Lemma_T_small},
if $T$ is sufficiently close to zero,
we can replace
$f_T(p)$ in
\eqref{def_fT} and $g_T(p)$ in \eqref{def_gT} with
\begin{align}
    \widetilde{f}_T(p) &:= -\log \det \widetilde{W}(p,T)=-n\log T -\sum_{i=1}^n \log p_i, \\
    \widetilde{g}_T(p) &:= {\rm tr}(\widetilde{W}^{-1}(p,T))= \frac{1}{T} \left( \sum_{i=1}^n\frac{1}{p_i}\right),
\end{align}
respectively, where
    $\widetilde{W}(p,T) := T{\rm diag}(p_1,\ldots, p_n)$.
This substitution is justified because Lemma \ref{Lemma_T_small} implies that
$W(p,T)$ can be approximated by $\widetilde{W}(p,T)$ when $T$ is sufficiently small.

Thus, if $T$ is sufficiently close to zero, FTCSP \eqref{prob:unstable} can be approximated to the following problem, where $\widetilde{h}_T(p)$ is $\widetilde{f}_T(p)$ or
$\widetilde{g}_T(p)$.

\begin{framed}
\vspace{-1em}
 \begin{align}
\label{prob:unstable2}
    \begin{aligned}
        &&& \text{minimize} && \widetilde{h}_T(p) \\
        &&& \text{subject to} && p \in \Delta_{\rm int}.
    \end{aligned}
\end{align}
\vspace{-1em}
\end{framed}

Note that the original constraint $X_T\cap \Delta$ in FTCSP \eqref{prob:unstable} was replaced with 
$\Delta_{\rm int}$, which denotes the interior of $\Delta$. Specifically, $\Delta_{\rm int}$ is defined as
\begin{align}
    \Delta_{\rm int} := \left\{ p = (p_i) \in \Real^n \left|
    \begin{array}{l}
         \sum_{i=1}^n p_i = 1,    \\
         0 < p_i \quad (i=1,\dots,n)  
    \end{array}
    \right. \right\}. \label{eq:simplex_int}
\end{align}
This substitution is necessary because $\widetilde{f}_T(p)$ and $\widetilde{g}_T(p)$ are not defined unless each component of the vector $p$ is positive.
Moreover, the new constraint $\Delta_{\rm int}$ of problem \eqref{prob:unstable2} is contained within the original constraint $X_T\cap \Delta$ of FTCSP \eqref{prob:unstable}, because
for any $p\in \Delta_{\rm int}$, system \eqref{eq:lti} is controllable, implying that $\Delta_{\rm int}\subset X_T$.

    Since $\Delta_{\rm int}$ is not closed in ${\bb R}^n$,
    determining whether an optimal solution exists for 
 problem \eqref{prob:unstable2} is a nontrivial issue.
 However, we can prove that an optimal solution does exist and is moreover unique, as shown below.

\begin{theorem} \label{Thm_T_small}
    Suppose that $T>0$ is given.
    Then, problem \eqref{prob:unstable2} has a unique optimal solution, which is given by
    $p={\bf 1}/n$ for  $\widetilde{h}_T(p)=\widetilde{f}_T(p)$ and $\widetilde{h}_T(p)=\widetilde{g}_T(p)$.
\end{theorem}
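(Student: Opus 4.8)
The plan is to minimize $\widetilde{f}_T$ and $\widetilde{g}_T$ separately over $\Delta_{\rm int}$, in each case first establishing that a minimizer exists (despite $\Delta_{\rm int}$ being non-closed) and then identifying it as ${\bf 1}/n$ via convexity and symmetry.

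\textbf{Existence.} For $\widetilde{f}_T(p) = -n\log T - \sum_{i=1}^n \log p_i$, note that as $p$ approaches the boundary of $\Delta$ (i.e.\ some $p_i\to 0^+$), we have $-\log p_i\to +\infty$, so $\widetilde{f}_T(p)\to+\infty$; the same blow-up happens for $\widetilde{g}_T(p) = \frac{1}{T}\sum_{i=1}^n 1/p_i$ since $1/p_i\to+\infty$. Hence, picking any interior reference point such as ${\bf 1}/n$ with finite value, the sublevel set $\{p\in\Delta_{\rm int}\mid \widetilde{h}_T(p)\le \widetilde{h}_T({\bf 1}/n)\}$ is bounded away from $\partial\Delta$, hence closed and bounded in $\Real^n$, hence compact; continuity of $\widetilde{h}_T$ on this compact set yields a minimizer in $\Delta_{\rm int}$.

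\textbf{Identification and uniqueness.} Both $\widetilde{f}_T$ and $\widetilde{g}_T$ are strictly convex on $\Delta_{\rm int}$: $-\log$ is strictly convex, and a sum of strictly convex functions of distinct coordinates is strictly convex; likewise $t\mapsto 1/t$ is strictly convex on $(0,\infty)$. Together with the convexity of $\Delta_{\rm int}$ this gives uniqueness of the minimizer once it is found. To pin it down I would use the symmetry argument combined with Lemma~\ref{Lem_sufficient}-style first-order conditions, or more directly: by symmetry of the objective and the feasible set under coordinate permutations, if $p^*$ is the unique minimizer then so is any permutation of it, forcing $p^*={\bf 1}/n$. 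Alternatively, one can verify the optimality condition directly: at $p={\bf 1}/n$, $\nabla \widetilde{f}_T(p) = -(1/p_1,\ldots,1/p_n)^\top = -n{\bf 1}$ and $\nabla \widetilde{g}_T(p) = -\frac{1}{T}(1/p_1^2,\ldots,1/p_n^2)^\top = -\frac{n^2}{T}{\bf 1}$, both of which are scalar multiples of ${\bf 1}$; since for any $p\in\Delta_{\rm int}$ we have ${\bf 1}^\top(p-{\bf 1}/n)=0$, the first-order variational inequality $\nabla\widetilde{h}_T({\bf 1}/n)^\top(p-{\bf 1}/n)\ge 0$ holds, which for a convex problem certifies global optimality.

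\textbf{Main obstacle.} The only genuinely delicate point is the existence claim, since $\Delta_{\rm int}$ is open and one cannot invoke compactness of the feasible set directly; the coercivity-at-the-boundary argument above is what resolves it, and it should be stated carefully (e.g.\ via the compactness of a sublevel set). Everything else—strict convexity, the symmetry/permutation argument, and the gradient computation—is routine. I would present existence first, then strict convexity, then the symmetry or first-order argument to conclude $p^*={\bf 1}/n$, noting that the same reasoning applies verbatim to both $\widetilde{f}_T$ and $\widetilde{g}_T$.
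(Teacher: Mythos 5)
Your proposal is correct, and its core is the same as the paper's: strict convexity of $\widetilde{f}_T$ and $\widetilde{g}_T$ on $\Delta_{\rm int}$ plus a first-order optimality condition at $p={\bf 1}/n$. Your variational-inequality check (the gradient at ${\bf 1}/n$ is a scalar multiple of ${\bf 1}$, and ${\bf 1}^\top(p-{\bf 1}/n)=0$ on the simplex) is exactly the content of the paper's Lagrange-multiplier computation — the condition $\nabla\widetilde{h}_T(p)=k{\bf 1}$ is the stationarity of the Lagrangian with multiplier $\lambda=-k$ — so that branch of your argument is the paper's proof in the language of Lemma~\ref{Lem_sufficient}. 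The one genuine difference is your treatment of existence: you prove it separately via coercivity at $\partial\Delta$ and compactness of a sublevel set, whereas the paper never argues existence explicitly, because exhibiting a feasible point satisfying the sufficient first-order condition of a convex problem already certifies it as a global minimizer; note that your own direct verification at ${\bf 1}/n$ has the same effect, so the coercivity step you flag as the ``main obstacle'' is in fact dispensable (though harmless, and it would be needed if you relied only on the permutation-symmetry identification, since that argument presupposes a minimizer exists). Your symmetry alternative is a valid, slightly different route not taken in the paper; uniqueness via strict convexity matches the paper's appeal to strict convexity on the convex set $\Delta_{\rm int}$.
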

\begin{proof}
We begin by computing the Hessians of $\widetilde{f}_T(p)$ and $\widetilde{g}_T(p)$ on $\Delta_{\rm int}$:
\begin{align*}
    \nabla^2 \widetilde{f}_T(p) &= {\rm diag}(1/p_1^2,\ldots, 1/p_n^2), \\
        \nabla^2 \widetilde{g}_T(p) &= \frac{2}{T}{\rm diag}(1/p_1^3,\ldots, 1/p_n^3).
\end{align*}
Since
both $\nabla^2 \widetilde{f}_T(p)$ and $\nabla^2 \widetilde{g}_T(p)$ are positive definite on $\Delta_{\rm int}$,
it follows from 
\cite[Proposition B.4]{bertsekas2016nonlinear}
that $\widetilde{f}_T(p)$ and $\widetilde{g}_T(p)$ are strictly convex on $\Delta_{\rm int}$.

Next, we introduce the Lagrangians associated with the equality constraint $\sum_{i=1}^n p_i = 1$. 
For $\widetilde{f}_T(p)$, define
\begin{align*}
    L_{f}(p,\lambda) &:= \widetilde{f}_T(p) + \lambda \left( \sum_{i=1}^n p_i-1 \right),
    \end{align*}
    and for $\widetilde{g}_T(p)$, define
    \begin{align*}
    L_{g}(p,\lambda) &:= \widetilde{g}_T(p) + \lambda \left( \sum_{i=1}^n p_i-1 \right),
\end{align*}
where $\lambda\in\mathbb{R}$ is the Lagrange multiplier for the constraint.

Taking the partial derivatives,
we obtain
\begin{align}
    \frac{\partial L_f}{\partial p_i}(p,\lambda) = -\frac{1}{p_i} + \lambda, \,\,
    \frac{\partial L_g}{\partial p_i}(p,\lambda) = -\frac{1}{Tp_i^2} + \lambda
\end{align}
 for any $p\in \Delta_{\rm int}$ and $\lambda \in {\bb R}$.
Thus, setting
$\frac{\partial L_f}{\partial p_i}(p,\lambda)=0$ 
yields $\lambda = 1/p_i$ for each $i=1,\ldots, n$, implying that the optimal solution satisfies $p_1=\cdots = p_n$.
Combining this with the constraint $\sum_{i=1}^np_i = 1$,
we obtain  $p_1=\cdots =p_n = 1/n$, which is an optimal solution to problem \eqref{prob:unstable2} with $\widetilde{h}_T(p) = \widetilde{f}_T(p)$.
Similarly, setting
$\frac{\partial L_g}{\partial p_i}(p,\lambda)=0$ yields
$\lambda = 1/(Tp^2_i)$ for each $i=1,\ldots, n$, implying that the optimal solution satisfies $p_1=\cdots = p_n$.
Thus, $p_1=\cdots =p_n = 1/n$, which is an optimal solution to problem \eqref{prob:unstable2} with $\widetilde{h}_T(p) = \widetilde{g}_T(p)$.

Since $\widetilde{f}_T(p)$ and $\widetilde{g}_T(p)$ are strictly convex on the convex set $\Delta_{\rm int}$, it follows from \cite[Proposition 1.1.2]{bertsekas2016nonlinear} that $p=\mathbf{1}/n$ is the unique optimal solution for both cases.
\qed
\end{proof}

Theorem \ref{Thm_T_small} guarantees that if $T$ is sufficiently small, the optimal solution---whether VCS or AECS---to FTCSP \eqref{prob:unstable} is close to ${\bf 1}/{n}$.
In other words, when $T$ is very small,  both VCS and AECS essentially become uniform, thereby failing to provide any meaningful information about the relative importance of the nodes.

%%%%%%%
\subsubsection{When $T$ is sufficiently large}

Since Lemma \ref{Lemma_T_small} holds only for sufficiently small $T$, it cannot be applied when $T$ is large. As a result, the analysis for large $T$ is, in general, challenging.

Therefore, we assume that
 $A$ is stable.
Here, we call $A$ stable if all eigenvalues of $A$ have negative real parts, and we say system \eqref{system0} to be asymptotically stable if $A$ is stable.

Then, we can define
\begin{align}
\label{eq:infinite_time_horizon_gramian}
    W_i^{\infty} := \int_0^\infty \exp(At) e_ie_i^\top \exp(A^\top t) \ \D t.
\end{align}
In this case, for a given $T>0$,
    $W_i^{\infty} = W_i(T) + \widehat{W}_i(T)$,
where
\begin{align}
    \widehat{W}_i(T):=\int_T^\infty \exp(At) e_ie_i^\top \exp(A^\top t) \ \D t. \label{Def_hat_W}
\end{align}

The following theorem shows that if $A$ is stable and $T$ is sufficiently large, then $W_i^{\infty}$ is close to $W_i(T)$.

\begin{theorem} \label{Thm_T_stable}
    Suppose that $A$ in \eqref{system0} is stable and diagonalizable.
    That is, there exists an invertible matrix $P\in {\bb C}^{n\times n}$ such that
    \begin{align}
        A = P^{-1}{\rm diag}(\lambda_1,\ldots, \lambda_n)P, \label{A_diag}
    \end{align}
    where $\lambda_i\in {\bb C}$ denotes the eigenvalue of $A$, and ${\rm Re}(\lambda_i)<0$ holds 
    for $i=1,\ldots, n$.
    For any positive $\varepsilon$ satisfying $\varepsilon <c$, if
    \begin{align}
    T>\frac{\log (\varepsilon/c)}{2\alpha}, \label{Ineq_T}
    \end{align} 
    then
    \begin{align}
        \|\widehat{W}_i(T)\| < \varepsilon, \label{Ineq_small}
    \end{align}
     where the above $\|\cdot\|$ denotes the operator norm, and
    \begin{align*}
    \alpha := \max\{{\rm Re}(\lambda_1),\ldots, {\rm Re}(\lambda_n)\}, \,
        c := -\frac{\|P\|^2\ \|P^{-1}\|^2}{2\alpha}.
    \end{align*}
\end{theorem}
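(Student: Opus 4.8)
The plan is to dominate the tail Gramian $\widehat W_i(T)$ by a single scalar exponential and then invert the resulting estimate for $T$. First I would use the diagonalization \eqref{A_diag} to write $\exp(At) = P^{-1}{\rm diag}(\e^{\lambda_1 t},\ldots,\e^{\lambda_n t})P$, which follows by termwise conjugation of the power series of $\exp(At)$ by $P$. Taking the operator norm, using submultiplicativity, and using the fact that the operator norm of a diagonal matrix equals the largest modulus of its diagonal entries, I obtain for every $t\ge 0$
\begin{align*}
    \|\exp(At)\| \leq \|P^{-1}\|\,\|P\|\max_{1\leq i\leq n}|\e^{\lambda_i t}| = \|P\|\,\|P^{-1}\|\,\e^{\alpha t},
\end{align*}
where the last equality uses $|\e^{\lambda_i t}| = \e^{{\rm Re}(\lambda_i)t}$ together with $t\ge 0$, so that the maximum over $i$ is attained with exponent $\alpha = \max_i {\rm Re}(\lambda_i)$. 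Since the operator norm is invariant under transposition and $A$ is real, the same bound holds for $\|\exp(A^\top t)\| = \|\exp(At)^\top\|$, and moreover $\|e_ie_i^\top\| = 1$.

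Next I would bound the integrand in \eqref{Def_hat_W} by submultiplicativity,
\begin{align*}
    \|\exp(At)\,e_ie_i^\top\,\exp(A^\top t)\| \leq \|\exp(At)\|\,\|e_ie_i^\top\|\,\|\exp(A^\top t)\| \leq \|P\|^2\,\|P^{-1}\|^2\,\e^{2\alpha t},
\end{align*}
and then apply the triangle inequality for integrals followed by an elementary integration, which converges because $\alpha<0$:
\begin{align*}
    \|\widehat W_i(T)\| \leq \|P\|^2\,\|P^{-1}\|^2\int_T^\infty \e^{2\alpha t}\,\D t = \|P\|^2\,\|P^{-1}\|^2\cdot\frac{\e^{2\alpha T}}{-2\alpha} = c\,\e^{2\alpha T}.
\end{align*}
The key observation is that the constant produced here is exactly the $c$ appearing in the statement, so no slack is lost.

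Finally I would solve $c\,\e^{2\alpha T} < \varepsilon$ for $T$. Because $\varepsilon < c$, the quantity $\log(\varepsilon/c)$ is a well-defined negative number, and $c\,\e^{2\alpha T} < \varepsilon$ is equivalent to $2\alpha T < \log(\varepsilon/c)$; dividing by $2\alpha < 0$ reverses the inequality to $T > \log(\varepsilon/c)/(2\alpha)$, which is precisely \eqref{Ineq_T}. Hence \eqref{Ineq_T} yields $\|\widehat W_i(T)\| \leq c\,\e^{2\alpha T} < \varepsilon$, establishing \eqref{Ineq_small}. I do not expect a genuine obstacle in this argument; the only points requiring care are the justification of $\exp(At) = P^{-1}{\rm diag}(\e^{\lambda_i t})P$ and the transpose-invariance of the operator norm, and—throughout—keeping track of the sign of $\alpha$, which is negative and therefore flips several of the inequalities (in particular the division by $2\alpha$ in the final step).
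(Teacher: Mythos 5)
Your proposal is correct and follows essentially the same route as the paper: bound the tail integrand via the diagonalization by $\|P\|^2\|P^{-1}\|^2\e^{2\alpha t}$, integrate to get $\|\widehat W_i(T)\|\leq c\,\e^{2\alpha T}$, and invert this bound (using $\alpha<0$) to obtain condition \eqref{Ineq_T}. The only difference is that you spell out the intermediate steps (the norm bound on $\exp(At)$, transpose invariance, $\|e_ie_i^\top\|=1$) that the paper leaves implicit.
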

\begin{proof}
    It follows from \eqref{Def_hat_W} and \eqref{A_diag} that
    \begin{align*}
        \|\widehat{W}_i(T)\| \leq \|P\|^2 \|P^{-1}\|^2 \int_T^\infty e^{2\alpha t} \D t
        = c e^{2\alpha T},
    \end{align*}
    where we have used $\alpha<0$. Thus, if \eqref{Ineq_T} holds, \eqref{Ineq_small} is satisfied. \qed
\end{proof}

Theorem \ref{Thm_T_stable} is important for the following reason: \(W_i^{\infty}\) can be computed more efficiently than \(W_i(T)\), because it can be obtained via the solution of a Lyapunov equation. Theorem \ref{Thm_T_stable} applies in the case where \(A\) is stable, and it implies that when \(T\) is sufficiently large, both AECS and VCS become independent of \(T\). However, if \(A\) is not stable, AECS and VCS may still depend on \(T\). 
Nevertheless, the following example suggests that when \(A\) is associated with a graph Laplacian that has a zero eigenvalue, as \(T\) increases, AECS and VCS eventually become independent of \(T\).

\begin{table*}[t]
\centering
\caption{Evaluation of VCS and AECS for various $T$.}
\label{tab:combined}
\begin{minipage}[b]{0.48\textwidth}
\centering
\textbf{VCS}\\[0.5ex]
\begin{tabular}{c c c c c}
\toprule
\textbf{Node} & \textbf{$T=0.01$} & \textbf{$T=1$} & \textbf{$T=1000$} & \textbf{$T=10000$} \\
\midrule
1  & 0.1      & 0.099677 & 0.073347 & 0.073327 \\
2  & 0.1      & 0.1      & 0.10112  & 0.10108  \\
3  & 0.1      & 0.1      & 0.10876  & 0.10874  \\
4  & 0.1      & 0.099997 & 0.086378 & 0.086362 \\
5  & 0.1      & 0.099674 & 0.045557 & 0.044985 \\
6  & 0.1      & 0.09935  & 0.060743 & 0.060707 \\
7  & 0.1      & 0.10131  & 0.24929  & 0.24952  \\
8  & 0.1      & 0.09967  & 0.042309 & 0.042214 \\
9  & 0.1      & 0.10033  & 0.16614  & 0.16674  \\
10 & 0.1      & 0.099995 & 0.066358 & 0.066317 \\
\bottomrule
\end{tabular}
\end{minipage}%
\hfill
\begin{minipage}[b]{0.48\textwidth}
\centering
\textbf{AECS}\\[0.5ex]
\begin{tabular}{c c c c c}
\toprule
\textbf{Node} & \textbf{$T=0.01$} & \textbf{$T=1$} & \textbf{$T=1000$} & \textbf{$T=10000$} \\
\midrule
1  & 0.1      & 0.10927  & 0.17127  & 0.17281  \\
2  & 0.1      & 0.1      & 0.11333  & 0.11364  \\
3  & 0.1      & 0.1      & 0.12054  & 0.12093  \\
4  & 0.1      & 0.1      & 0.10584  & 0.10610  \\
5  & 0.1      & 0.099783 & 0.090745 & 0.092299 \\
6  & 0.1      & 0.10905  & 0.13350  & 0.13383  \\
7  & 0.1      & 0.091277 & 0.092572 & 0.092751 \\
8  & 0.1      & 0.099815 & 0.069467 & 0.069445 \\
9  & 0.1      & 0.090815 & 0.0070316& 0.0023358\\
10 & 0.1      & 0.099979 & 0.09571  & 0.095859 \\
\bottomrule
\end{tabular}
\end{minipage}
\end{table*}

\subsubsection{Example} \label{Sec_effect_T_Ex}
Consider the
Laplacian dynamics of $\dot{x}(t)=-Lx(t)$
on the directed graph in Fig. \ref{fig:directed1},
where $L$ is a graph Laplacian with uniform edge weight $0.2$.
Then,
 Algorithm \ref{alg:projgrad} produced VCS and AECS for each state node, as shown in Table \ref{tab:combined}.
 As expected, for small $T$, both scores 
are approximately $1/n=0.1$.
Moreover, as $T$ increases, they converge to constants, indicating that the influence of $T$ saturates in the controllability evaluation.

According to Table \ref{tab:combined}, for sufficiently large $T$, VCS generally assigns higher importance to upstream nodes. As demonstrated in \cite[Section VI-B]{sato2022controllability}, both VCS and AECS tend to assign higher values to upstream nodes in asymptotically stable hierarchical systems.
 In contrast, our study examines Laplacian dynamics, which inherently lack asymptotic stability. Despite this difference, VCS maintains a similar valuation trend, thereby reaffirming the importance of node hierarchy. 
 On the other hand, Table \ref{tab:combined} shows that AECS values are more strongly influenced by node in-degree. This dependence of AECS on in-degree appears to be a consequence of the Laplacian dynamics employed in our work.

%%%%%%%%%%%%%%%%%%%%
\subsubsection{Summary}
In essence, the variations in the scores for different values of
$T$ can be interpreted as follows: For very small
$T$, the short-term dynamics dominate and do not allow for a meaningful differentiation of node importance (hence the uniformity), whereas for larger
$T$, the scores capture the long-term controllability characteristics of the system, providing a more nuanced measure of each node's significance.

%%%%%%%%%%%
\subsection{Convergence analysis of Algorithm \ref{alg:projgrad}} \label{Sec_Convergence}

 In this section, we show that the sequence generated by Algorithm \ref{alg:projgrad} converges linearly to the optimal solution of FTCSP \eqref{prob:unstable} under some assumptions.
That is, we resolve (iv), as mentioned in Section \ref{Subsec_limitation}.

To this end, suppose that $p^{(0)}\in X_T\cap \Delta$ is given, and
define 
\begin{align*}
\mathcal{F}_T^{(0)} := \{p\in {\bb R}^n\mid h_T(p)\leq h_T(p^{(0)})\}\cap (X_T\cap \Delta).
\end{align*}
In the same manner as the proof in \cite[Lemma 1]{sato2022controllability},
we can obtain
    \begin{align*}
        \mathcal{F}_T^{(0)} = \{p\in {\bb R}^n\mid h_T(p)\leq h_T(p^{(0)})\}\cap \Delta
    \end{align*}
and we can conclude that
$\mathcal{F}_T^{(0)}$ is a compact and convex set in ${\bb R}^n$.

\begin{lemma} \label{Lem_strongly_convex}
    The objective function $h_T(p)$ for FTCSP \eqref{prob:unstable} is
    Lipschitz smooth on the set $\mathcal{F}_T^{(0)}$. That is,
    there exists a positive real number $L$ such that
    \begin{align}
        \lambda_{\rm max}(p) \leq L\label{Eq_L_smooth}
    \end{align}
    for any $p\in \mathcal{F}_T^{(0)}$,
    where $\lambda_{\rm max}(p)$ denotes the maximum eigenvalue of $\nabla^2 h_T(p)$.
    Moreover, $h_T(p)$ is strongly convex on $\mathcal{F}_T^{(0)}$. That is, 
    there exists a positive real number $m$ such that
    \begin{align}
        \lambda_{\rm min}(p) \geq m \label{Eq_strong_convex}
    \end{align}
    for any $p\in \mathcal{F}_T^{(0)}$,
    where $\lambda_{\rm min}(p)$ denotes the minimum eigenvalue of $\nabla^2 h_T(p)$.
\end{lemma}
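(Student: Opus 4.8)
The plan is to combine the already–established compactness of $\mathcal{F}_T^{(0)}$ with the continuity of the Hessian on $X_T \supseteq \mathcal{F}_T^{(0)}$. Each entry of $\nabla^2 h_T(p)$ in \eqref{Hessian_f} or \eqref{Hessian_g} is a polynomial expression in $W(p,T)^{-1}$ and the fixed matrices $W_1(T),\ldots,W_n(T)$, and $p\mapsto W(p,T)^{-1}$ is continuous on $X_T$; hence $p\mapsto \nabla^2 h_T(p)$, and therefore $\lambda_{\rm max}(p)$ and $\lambda_{\rm min}(p)$ (eigenvalues depend continuously on the matrix), are continuous on the compact set $\mathcal{F}_T^{(0)}$. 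For the Lipschitz–smoothness bound \eqref{Eq_L_smooth} I would simply set $L:=\max_{p\in\mathcal{F}_T^{(0)}}\lambda_{\rm max}(p)$, which is finite and attained by the extreme value theorem; it is positive because $\lambda_{\rm max}(p)\geq (\nabla^2 h_T(p))_{ii}$ and the diagonal entries are strictly positive (for $f_T$, $(\nabla^2 f_T(p))_{ii}=\tr\big((W(p,T)^{-1/2}W_i(T)W(p,T)^{-1/2})^2\big)>0$ since $W_i(T)\neq O$, and the same computation works for $g_T$).

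For the strong–convexity bound \eqref{Eq_strong_convex}, the crux is to show $\nabla^2 h_T(p)\succ O$ at every $p\in\mathcal{F}_T^{(0)}$; once this is done, $\lambda_{\rm min}(p)>0$ for all such $p$ and $m:=\min_{p\in\mathcal{F}_T^{(0)}}\lambda_{\rm min}(p)>0$ again by compactness and continuity. To obtain the positive definiteness, fix $p$, write $W:=W(p,T)\succ O$, and for $v=(v_i)\in\Real^n$ set $M:=\sum_{i=1}^n v_iW_i(T)=W(v,T)$. Rearranging \eqref{Hessian_f} via cyclicity of the trace gives $v^\top\nabla^2 f_T(p)v=\tr\big(W^{-1}MW^{-1}M\big)=\tr(N^2)$ with $N:=W^{-1/2}MW^{-1/2}$ symmetric, so the form is $\geq 0$ and vanishes only if $N=O$, i.e.\ $M=O$. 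A slightly longer but analogous manipulation of \eqref{Hessian_g} gives $v^\top\nabla^2 g_T(p)v=2\,\tr\big(W^{-1}MW^{-1}MW^{-1}\big)=2\,\tr\big((W^{-1/2}MW^{-1})^\top(W^{-1/2}MW^{-1})\big)\geq 0$, vanishing only if $M=O$. Hence in both cases the quadratic form is zero only when $W(v,T)=M=O$, and the hypothesis ``$W(x,T)=O$ implies $x=0$'' of Lemma~\ref{lem:sufficient condition of uniqueness} — equivalently, linear independence of $W_1(T),\ldots,W_n(T)$, which by (the proof of) Theorem~\ref{thm:unique_anyA} holds for almost all $T>0$ and which we take as a standing assumption of this section — forces $v=0$. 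Thus $\nabla^2 h_T(p)\succ O$, so $\lambda_{\rm min}(p)>0$.

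The main obstacle is precisely this positive–definiteness step: without the linear independence of the $W_i(T)$ the Hessian is merely positive semidefinite and strong convexity fails, so the argument must invoke the same condition that underlies uniqueness of VCS/AECS. The secondary technical point is the trace manipulation for $g_T$, where one must carefully insert $W^{-1/2}$ and use cyclicity of the trace to reach a manifestly nonnegative expression that vanishes exactly when $M=O$. Everything else — compactness of $\mathcal{F}_T^{(0)}$ (already shown), continuity of $p\mapsto\nabla^2 h_T(p)$ and of its extreme eigenvalues, and the extreme value theorem — is routine.
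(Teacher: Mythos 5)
Your proposal is correct and follows essentially the same route as the paper: establish that $\nabla^2 h_T(p)$ is positive definite at every point of $X_T\cap\Delta$ (hence on $\mathcal{F}_T^{(0)}$), then use compactness of $\mathcal{F}_T^{(0)}$, continuity of $\lambda_{\rm max}(p)$ and $\lambda_{\rm min}(p)$, and the Weierstrass theorem to extract uniform constants $L$ and $m$. The only difference is that the paper imports the strict convexity step by citing the proofs of Theorems 1 and 3 of \cite{sato2022controllability}, whereas you prove positive definiteness directly via the trace identities $v^\top\nabla^2 f_T(p)v=\tr\bigl((W^{-1/2}W(v,T)W^{-1/2})^2\bigr)$ and $v^\top\nabla^2 g_T(p)v=2\tr\bigl((W^{-1/2}W(v,T)W^{-1})^\top(W^{-1/2}W(v,T)W^{-1})\bigr)$ and, correctly, make explicit that this hinges on linear independence of $W_1(T),\ldots,W_n(T)$ (equivalently $W(x,T)=O\Rightarrow x=0$, valid for almost all $T$ by Theorem \ref{thm:unique_anyA}), an assumption the paper leaves implicit in its citation.
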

\begin{proof}
Similarly to the proof of \cite[Theorems 1 and 3]{sato2022controllability}, we can show that $h_T(p)$ is strictly convex on $X_T\cap \Delta$.
That is, the Hessian $\nabla^2 h_T(p)$ is positive definite for any $p\in X_T\cap \Delta$.
Thus, $\nabla^2 h_T(p)$ is positive definite on the compact set $\mathcal{F}_T^{(0)}$, since $\mathcal{F}_T^{(0)} \subset X_T\cap \Delta$. 
This implies that there exist $L>0$ and $m>0$ such that \eqref{Eq_L_smooth} and \eqref{Eq_strong_convex} hold for any $p\in \mathcal{F}_T^{(0)}$,
because $\lambda_{\rm max}(p)$ and $\lambda_{\rm min}(p)$ are continuous on the compact set $\mathcal{F}_T^{(0)}$ and the Weierstrass' Theorem applies, as shown in \cite[Proposition A.8]{bertsekas2016nonlinear}. \qed
\end{proof}

Using Lemma \ref{Lem_strongly_convex}, we derive the following linear convergence rate for Algorithm \ref{alg:projgrad}.

\begin{theorem} \label{Thm_linear_convergence}
Suppose that $L$ and $m$ be positive real numbers satisfying \eqref{Eq_L_smooth} and \eqref{Eq_strong_convex} for any $p\in \mathcal{F}_T^{(0)}$, respectively, and suppose that $\{p^{(k)}\}$ and $\{\alpha^{(k)}\}$ were generated by Algorithm \ref{alg:projgrad} with $\varepsilon =0$.
Let $\alpha_{\rm min}$ and $\alpha_{\rm max}$ be constants satisfying $\alpha_{\rm min}>0$ and $\alpha_{\rm max}<2/L$. 
If the sequence $\{\alpha^{(k)}\}$ satisfies
    $\alpha_{\rm min} \leq \alpha^{(k)} \leq \alpha_{\rm max}$
for all $k$,
then $\{p^{(k)}\}$ converges linearly to the optimal solution $p^*$ of FTCSP \eqref{prob:unstable} with the convergence rate
\begin{align*}
    r:=& \max\{|1-\alpha_{\rm max}L|, |1-\alpha_{\rm min}L|,\\
    & |1-\alpha_{\rm max}m|, |1-\alpha_{\rm min}m|\},
\end{align*} meaning that
\begin{align}
    \|p^{(k)}-p^*\| \leq r^k \|p^{(0)}-p^*\| \label{Ineq_linear_convergence}
\end{align}
for all $k$.
\end{theorem}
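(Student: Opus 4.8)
The plan is to combine nonexpansiveness of the Euclidean projection $\Pi_\Delta$ with the standard fact that one gradient step of size $\alpha$ contracts distances to the minimizer by the factor $\max\{|1-\alpha m|,|1-\alpha L|\}$ whenever the objective is $m$-strongly convex and $L$-smooth along the relevant segment. First I would set up the fixed-point identity. By Lemma~\ref{Lem_strongly_convex}, $h_T$ is strongly convex on $\mathcal{F}_T^{(0)}$, so FTCSP~\eqref{prob:unstable} has a unique minimizer $p^*$, and $p^*\in\mathcal{F}_T^{(0)}$ since $h_T(p^*)\le h_T(p^{(0)})$. I would show $p^*=\Pi_\Delta\!\big(p^*-\alpha\nabla h_T(p^*)\big)$ for every $\alpha>0$: the projection characterization $q=\Pi_\Delta(z)\iff(z-q)^\top(p-q)\le0$ for all $p\in\Delta$ reduces this to $\nabla h_T(p^*)^\top(p-p^*)\ge0$ for all $p\in\Delta$, which follows from the optimality inequality \eqref{daiji} on $X_T\cap\Delta$ together with the openness of $X_T$ in $\Real^n$ (a point of $\Delta$ is approached from inside $X_T\cap\Delta$ along a segment). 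I would also record that the iterates stay in $\mathcal{F}_T^{(0)}$: the projected-arc displacement obeys $\nabla h_T(p^{(k)})^\top(\tilde p^{(k)}-p^{(k)})\le-\tfrac{1}{\alpha^{(k)}}\|\tilde p^{(k)}-p^{(k)}\|^2\le0$ (from the variational inequality for $\Pi_\Delta$), so the Armijo test in Algorithm~\ref{alg:Armijo} gives $h_T(p^{(k+1)})\le h_T(p^{(k)})\le h_T(p^{(0)})$; hence $p^{(k)}\in\mathcal{F}_T^{(0)}$ for all $k$, and since $\mathcal{F}_T^{(0)}$ is convex the whole segment $[p^*,p^{(k)}]$ lies in it.

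For the one-step contraction, write $\nabla h_T(p^{(k)})-\nabla h_T(p^*)=H_k(p^{(k)}-p^*)$ with the symmetric matrix $H_k:=\int_0^1\nabla^2 h_T\big(p^*+s(p^{(k)}-p^*)\big)\,\D s$; since the integration segment stays in $\mathcal{F}_T^{(0)}$, Lemma~\ref{Lem_strongly_convex} forces every eigenvalue of $H_k$ into $[m,L]$. Using the fixed-point identity and nonexpansiveness of $\Pi_\Delta$,
\[
\|p^{(k+1)}-p^*\|=\big\|\Pi_\Delta\!\big(p^{(k)}-\alpha^{(k)}\nabla h_T(p^{(k)})\big)-\Pi_\Delta\!\big(p^*-\alpha^{(k)}\nabla h_T(p^*)\big)\big\|\le\big\|(I-\alpha^{(k)}H_k)(p^{(k)}-p^*)\big\|,
\]
so $\|p^{(k+1)}-p^*\|\le\|I-\alpha^{(k)}H_k\|\,\|p^{(k)}-p^*\|$, and $\|I-\alpha^{(k)}H_k\|=\max_i|1-\alpha^{(k)}\mu_i^{(k)}|\le\max\{|1-\alpha^{(k)}m|,|1-\alpha^{(k)}L|\}$, the last step because $t\mapsto|1-\alpha^{(k)}t|$ is convex and hence maximized over $[m,L]$ at an endpoint.

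To remove the dependence on $\alpha^{(k)}$, note that $t\mapsto|1-tm|$ and $t\mapsto|1-tL|$ are convex in $t$, so on $[\alpha_{\min},\alpha_{\max}]$ each is maximized at an endpoint, giving $\|I-\alpha^{(k)}H_k\|\le r$ for all $k$; moreover $r<1$ because $0<\alpha_{\min}\le\alpha_{\max}<2/L$ yields $\alpha_{\min}L,\alpha_{\max}L\in(0,2)$, and $0<\alpha_{\min}m\le\alpha_{\max}m\le\alpha_{\max}L<2$ (using $m\le L$). Iterating $\|p^{(k+1)}-p^*\|\le r\|p^{(k)}-p^*\|$ then gives \eqref{Ineq_linear_convergence}.

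The main obstacle is not the contraction computation, which is routine, but the confinement bookkeeping: one must ensure the line-searched point $\tilde p^{(k)}$ actually lies in $X_T$ (so that $h_T$ is finite there and the Armijo condition is meaningful) and that the descent property keeps every iterate---and therefore every segment $[p^*,p^{(k)}]$---inside $\mathcal{F}_T^{(0)}$, which is exactly the region where the uniform curvature bounds \eqref{Eq_L_smooth}--\eqref{Eq_strong_convex} and the fixed-point identity for $p^*$ are simultaneously available; this part parallels the analysis already underlying Proposition~\ref{Prop_convergence}.
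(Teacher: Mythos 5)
Your proposal is correct, and its skeleton (the fixed-point identity $p^*=\Pi_\Delta(p^*-\alpha^{(k)}\nabla h_T(p^*))$ plus nonexpansiveness of $\Pi_\Delta$) matches the paper's; where you diverge is the heart of the contraction estimate. The paper never forms a Hessian along the segment: after expanding the squared distance it invokes the co-coercivity inequality of \cite[Proposition B.5]{bertsekas2016nonlinear},
\begin{align}
(\nabla h_T(p^{(k)})-\nabla h_T(p^*))^\top (p^{(k)}-p^*) \geq \tfrac{mL}{m+L}\|p^{(k)}-p^*\|^2+\tfrac{1}{m+L}\|\nabla h_T(p^{(k)})-\nabla h_T(p^*)\|^2,
\end{align}
and then splits on the sign of $\beta^{(k)}=\alpha^{(k)}-\tfrac{2}{m+L}$, using the gradient Lipschitz bound $\|\nabla h_T(p_1)-\nabla h_T(p_2)\|\le L\|p_1-p_2\|$ in one case and the strong-monotonicity bound $\ge m\|p_1-p_2\|$ in the other, arriving at the same per-step factor $\max\{(1-\alpha^{(k)}L)^2,(1-\alpha^{(k)}m)^2\}$. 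You instead write $\nabla h_T(p^{(k)})-\nabla h_T(p^*)=H_k(p^{(k)}-p^*)$ with $H_k=\int_0^1\nabla^2 h_T(p^*+s(p^{(k)}-p^*))\,\D s$, confine the segment to the convex set $\mathcal{F}_T^{(0)}$ so that the spectrum of $H_k$ lies in $[m,L]$, and bound $\|I-\alpha^{(k)}H_k\|$ by an endpoint argument. Your spectral route is more direct and avoids the case split, at the mild cost of needing $h_T\in C^2$ along the segment (freely available here, since the Hessians \eqref{Hessian_f}--\eqref{Hessian_g} are explicit), whereas the paper's co-coercivity route would survive with only $C^1$ data. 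You also make explicit three points the paper leaves implicit: the extension of the variational inequality \eqref{daiji} from $X_T\cap\Delta$ to all of $\Delta$ (via openness of $X_T$) to justify the fixed-point identity, the Armijo-descent argument keeping every iterate (hence every segment $[p^*,p^{(k)}]$) inside $\mathcal{F}_T^{(0)}$ where \eqref{Eq_L_smooth}--\eqref{Eq_strong_convex} apply, and the verification that $r<1$; these are genuine improvements in rigor rather than deviations.
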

\begin{proof}
Since $\{p^{(k)}\}$ was generated by Algorithm \ref{alg:projgrad} and
\begin{align*}
    p^* = \Pi_{\Delta}(p^*-\alpha^{(k)} \nabla h_T(p^*))
\end{align*}
holds for all $k$, as explained in \cite[Section 3.3.1]{bertsekas2016nonlinear},
\begin{align}
   & \|p^{(k+1)} -p^*\|^2 \\
   \leq& \|\Pi_{\Delta}(p^{(k)}-\alpha^{(k)}\nabla h_T(p^{(k)}))-\Pi_{\Delta}(p^{*}-\alpha^{(k)}\nabla h_T(p^{*}) )\|^2 \\
   \leq & \|p^{(k)}-\alpha^{(k)}\nabla h_T(p^{(k)})-(p^{*}-\alpha^{(k)}\nabla h_T(p^{*}))\|^2 \\
   \leq & \|p^{(k)}-p^*\|^2 -2\alpha^{(k)}(\nabla h_T(p^{(k)})-\nabla h_T(p^*))^\top (p^{(k)}-p^*) \\
   &+ (\alpha^{(k)})^2 \|\nabla h_T(p^{(k)})-\nabla h_T(p^*)\|^2, \label{Ineq1}
\end{align}
where the second inequality follows from the nonexpansive property of the projection $\Pi_{\Delta}$, as shown in \cite[Proposition 1.1.4]{bertsekas2016nonlinear}.
Moreover, \cite[Proposition B. 5]{bertsekas2016nonlinear} yields
\begin{align}
    & (\nabla h_T(p^{(k)})-\nabla h_T(p^*))^\top (p^{(k)}-p^*) \label{Ineq2}\\
    \geq & \frac{mL}{m+L}\|p^{(k)}-p^*\|^2 +\frac{1}{m+L}\|\nabla h_T(p^{(k)})-\nabla h_T(p^*)\|^2. 
\end{align}
Combining \eqref{Ineq1} and \eqref{Ineq2}, we obtain
\begin{align}
    \|p^{(k+1)} -p^*\|^2
   \leq& \gamma^{(k)} \|p^{(k)}-p^*\|^2 \label{case0} \\
   &+\alpha^{(k)}\beta^{(k)}  \|\nabla h_T(p^{(k)})-\nabla h_T(p^*)\|^2,
\end{align}
where $\gamma^{(k)}:=1 - \frac{2\alpha^{(k)}mL}{m+L}$ and $\beta^{(k)}:=\alpha^{(k)} -\frac{2}{m+L}$.

If $\beta^{(k)}\geq 0$,
\begin{align}
    \beta^{(k)}\|\nabla h_T(p^{(k)})-\nabla h_T(p^*)\|^2 \leq \beta^{(k)}L^2\|p^{(k)}-p^*\|^2. \label{case1}
\end{align}
Here, we have used the fact that \eqref{Eq_L_smooth} holds for any $p\in \mathcal{F}_T^{(0)}$ if and only if $\|\nabla h_T(p_1)-\nabla h_T(p_2)\| \leq L \|p_1-p_2\|$ holds for any $p_1, p_2\in \mathcal{F}_T^{(0)}$, as shown in \cite[Corollary 5.13]{beck2017first}.
If $\beta^{(k)}< 0$,
\begin{align}
    \beta^{(k)}\|\nabla h_T(p^{(k)})-\nabla h_T(p^*)\|^2 \leq \beta^{(k)}m^2\|p^{(k)}-p^*\|^2. \label{case2}
\end{align}
Here, we have used the fact that if \eqref{Eq_strong_convex} holds for any $p\in \mathcal{F}_T^{(0)}$, then $\|\nabla h_T(p_1)-\nabla h_T(p_2)\| \geq m \|p_1-p_2\|$ holds for any $p_1, p_2\in \mathcal{F}_T^{(0)}$, by applying the result in \cite[Theorem 5.24]{beck2017first}.

Using \eqref{case0}, \eqref{case1}, and \eqref{case2}, we obtain an upper bound for $\|p^{(k+1)}-p^*\|^2$:
\begin{align*}
    & \|p^{(k+1)}-p^*\|^2 \\
    \leq & (\gamma^{(k)}+\alpha^{(k)}\max \{\beta^{(k)}L^2,\beta^{(k)}m^2\} )\|p^{(k)}-p^*\|^2 \\
    = & \max \{ (1-\alpha^{(k)}L)^2, (1-\alpha^{(k)}m)^2 \} \|p^{(k)}-p^*\|^2 \\
    \leq & r^2\|p^{(k)}-p^*\|^2,
\end{align*}
which implies that \eqref{Ineq_linear_convergence} holds. \qed
\end{proof}

 Theorem \ref{Thm_linear_convergence}  guarantees that, under the specified conditions, the distance between the iterates $p^{(k)}$ and the optimal solution of FTCSP \eqref{prob:unstable} decreases by at least a fixed factor (less than $1$) at each iteration, ensuring efficient convergence.
To substantiate Theorem \ref{Thm_linear_convergence},
Section~\ref{Sec_convergence_experiment} presents numerical experiments.

%%%%%%%%%
\section{Numerical experiments using real-world human brain network data} \label{Sec_numerical}

\begin{figure}[t]
   \centering
   \begin{minipage}[b]{0.48\linewidth}
     \centering
     \includegraphics[width=\linewidth]{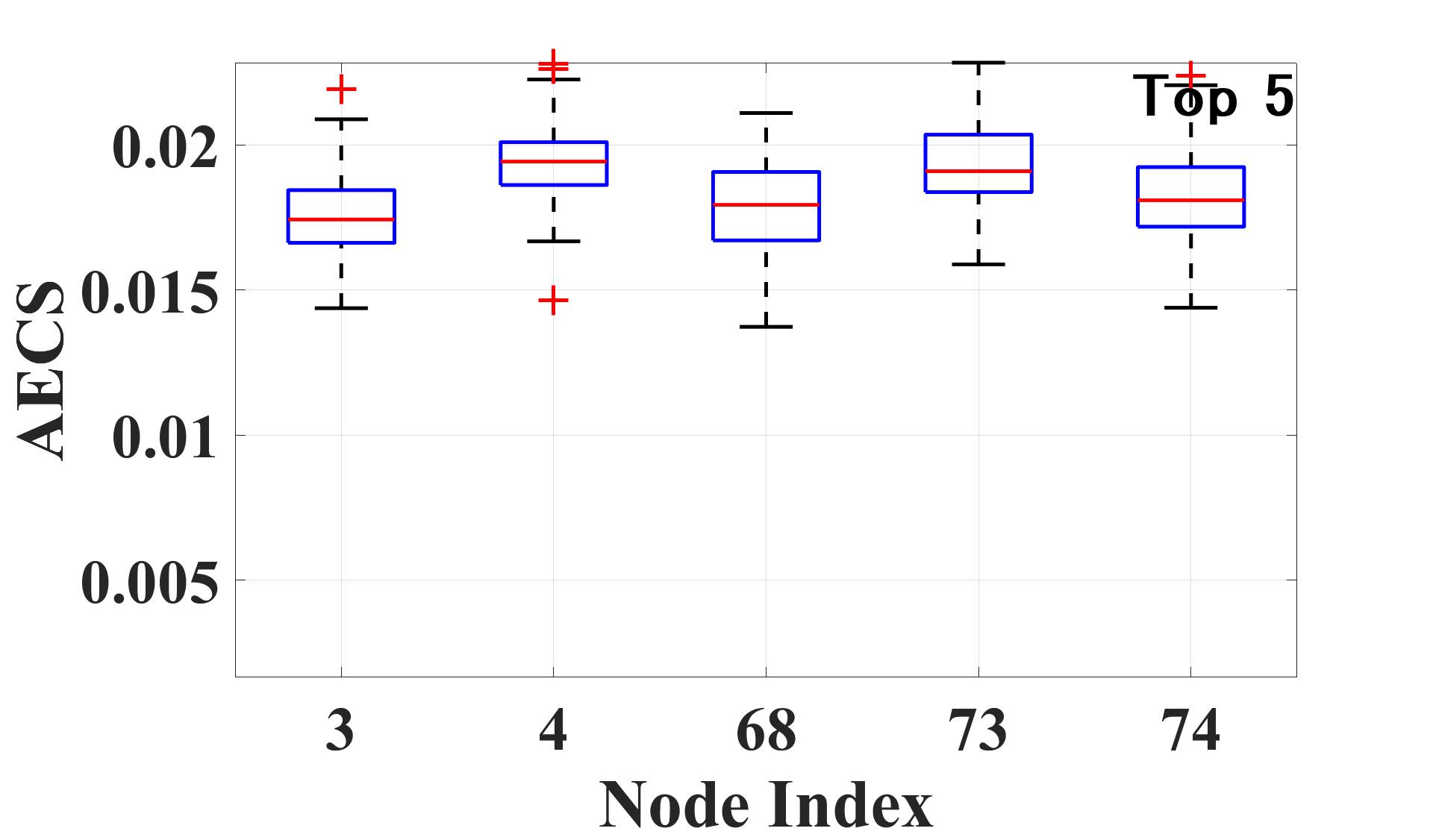}
   \end{minipage}
   \hfill
   \begin{minipage}[b]{0.48\linewidth}
     \centering
     \includegraphics[width=\linewidth]{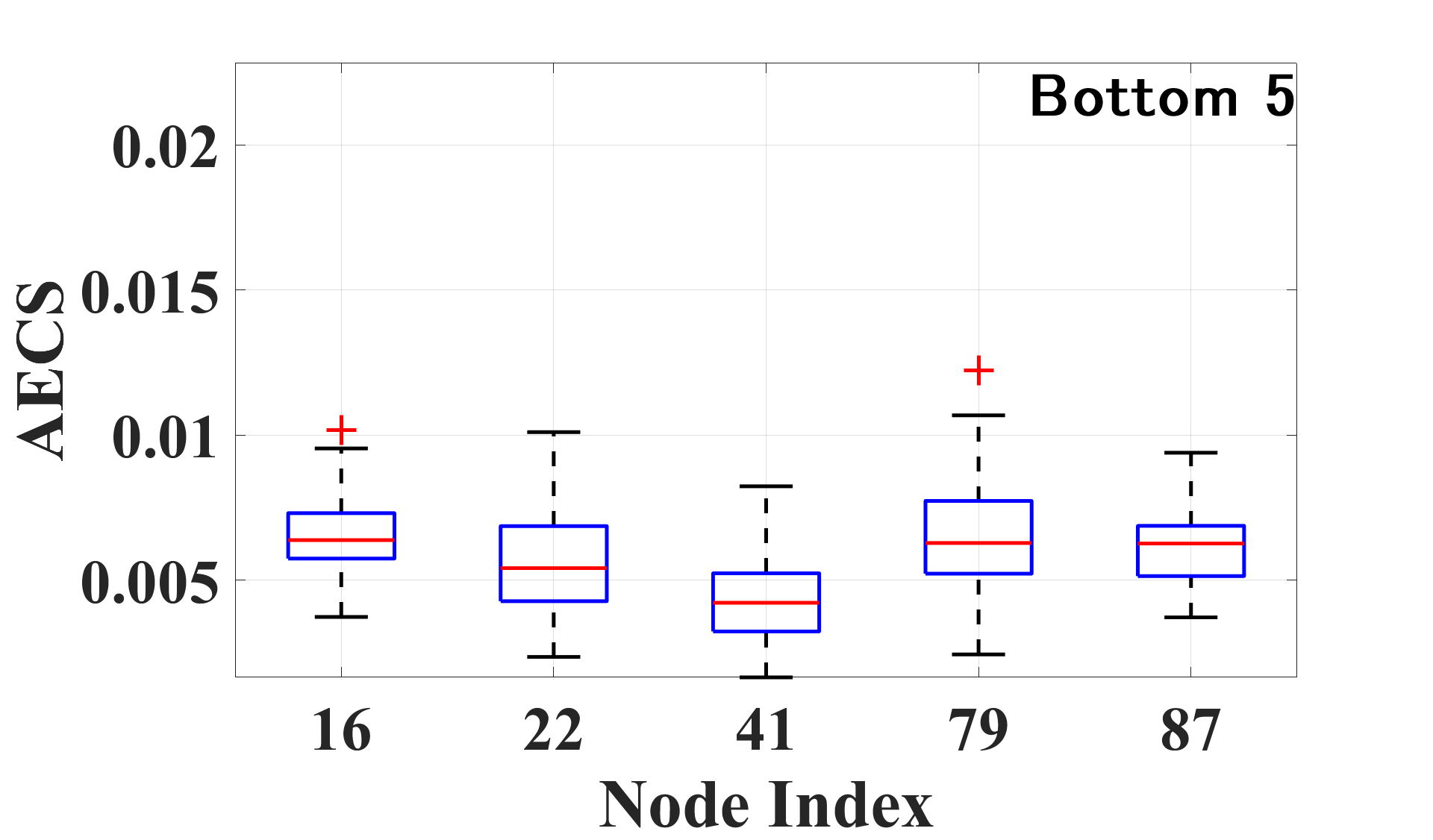}
   \end{minipage}
   \caption{Boxplot of AECS for $T=100$. (Left: Top 5; Right: Bottom 5.)}
   \label{fig:AECS_T100}
\end{figure}

\begin{figure}[t]
   \centering
   \begin{minipage}[b]{0.48\linewidth}
     \centering
     \includegraphics[width=\linewidth]{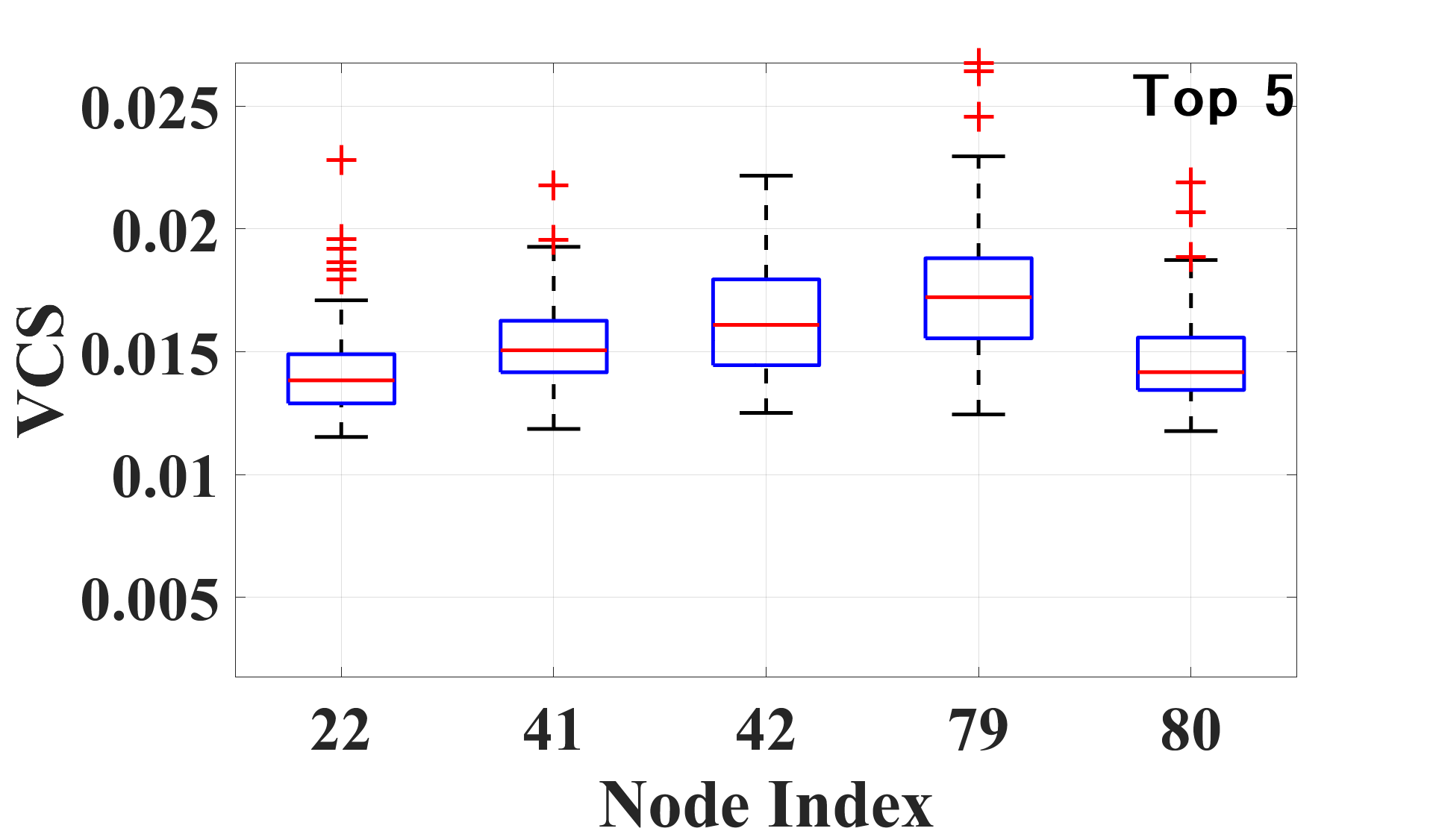}
   \end{minipage}
   \hfill
   \begin{minipage}[b]{0.48\linewidth}
     \centering
     \includegraphics[width=\linewidth]{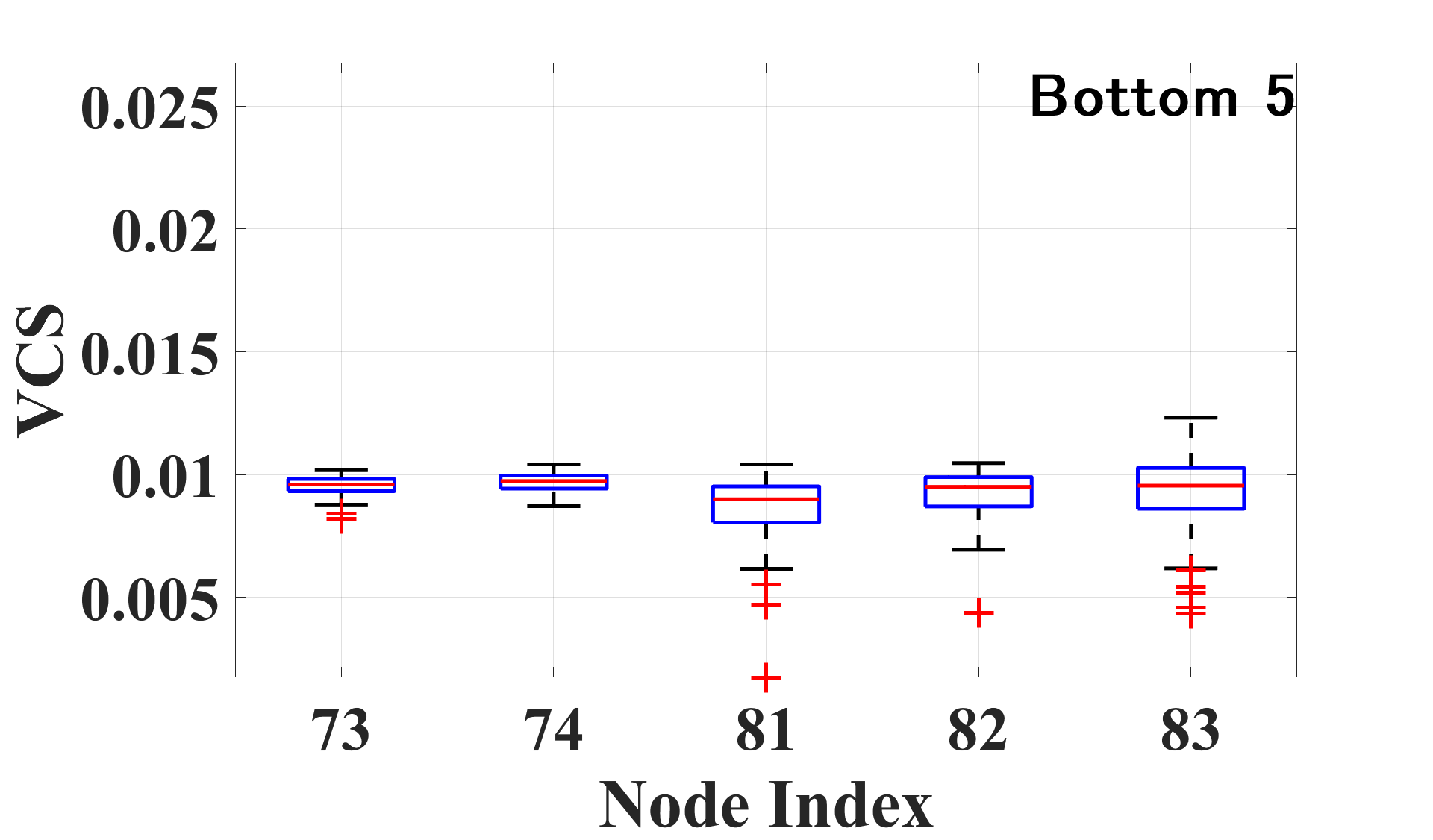}
   \end{minipage}
   \caption{Boxplot of VCS for $T=100$. (Left: Top 5; Right: Bottom 5.)}
   \label{fig:VCS_T100}
\end{figure}

We evaluated VCS and AECS using real-world data from human brain networks, which can be downloaded from \url{https://doi.org/10.17605/OSF.IO/YW5VF},
as provided in
\cite{vskoch2022human}.
This dataset includes connectivity matrix data for $88$ individuals.
Each individual's brain network is represented by a $90\times 90$ matrix, where the matrix element at row $i$ and column $j$ indicates the connectivity probability from the $i$-th region of interest (ROI) to the $j$-th ROI, as defined by the automatic anatomical labeling atlas.
That is, the dataset contains brain networks for $88$ individuals, and each network consists of $90$ nodes corresponding to different brain regions.

We model the individual blood oxygen level dependent (BOLD) signal dynamics as 
the continuous-time system
\begin{align}
   \dot{x}^{(i)}(t) = -\mathcal{L}^{(i)}x^{(i)}(t)\quad (i=1,\ldots, 88), \label{ex_continuous} 
\end{align}
where each component of $x^{(i)}(t)$ denotes the BOLD signal of each ROI at time $t$ for the $i$-th individual.
Here, 
\begin{align*}
 \mathcal{L}^{(i)} := {\rm diag}\left(\sum_{j=1}^{90} \mathcal{A}^{(i)}_{1j}, \ldots, \sum_{j=1}^{90} \mathcal{A}^{(i)}_{90,j} \right) - \mathcal{A}^{(i)}
\end{align*}
is the graph Laplacian, and
$\mathcal{A}^{(i)}\in {\bb R}^{90\times 90}$ denotes the
 transpose of the connectivity matrix for the $i$-th individual.
 This means that we used a directed structural connectivity matrix, which better captures the inherently directional nature of neural connections in the brain.
Thus,
 $\mathcal{L}^{(i)}$ possesses a zero eigenvalue.
However, Theorem \ref{thm:unique_anyA} guarantees that VCS and AECS uniquely exist for almost all $T>0$.
Note that the model \eqref{ex_continuous}  can be regarded as the continuous-time version of a discrete-time model considered in \cite{gu2015controllability}.

Because VCS and AECS depend on finite-time parameter $T$ in optimization problem \eqref{prob:unstable},
we compared VCS and AECS for different values of $T$. 
However, as theoretically proven in Section \ref{Sec_effect_T}, for small values of \(T\) the VCS and AECS lose their interpretative meaning. Moreover, as demonstrated in Section \ref{Sec_effect_T_Ex}, our experiments confirm that as \(T\) increases, both metrics converge to values that are independent of \(T\). Therefore, we focus solely on the results for \(T=100\). Specifically, Fig.~\ref{fig:AECS_T100} illustrates the relationship between AECS and ROI, displaying the top 5 and bottom 5 values in box plots based on 88 individual observations, while Fig.~\ref{fig:VCS_T100} shows the corresponding relationship for VCS. The top 5 and bottom 5 for both AECS and VCS were determined based on the average values across the 90 brain regions for the 88 individuals.

We observed that the range---the difference between the largest and smallest values---of VCS was generally smaller than that of AECS. This observation is consistent with Theorem \ref{Thm_symmetric}, since $\mathcal{L}^{(i)}$ is not symmetric but is close to a symmetric matrix, as reported in \cite{vskoch2022human}.

\begin{remark}
    As for the dynamics, we acknowledge that standard brain activity models incorporate nonlinearities, particularly in the evolution of the BOLD signal \cite{friston2000nonlinear, zeidman2019guide}. However, in our study we adopt a linear Laplacian-based dynamics model as a first-order approximation. This linearization is reasonable when the system operates near a resting state or equilibrium, where perturbations are small. In such regimes, linear models have been shown to capture essential features of the brain’s response while offering significant analytical and computational tractability. This approach has also been supported by prior work in the literature \cite{gu2015controllability, tu2018warnings, pasqualetti2019re, suweis2019brain}.
\end{remark}

%%%%%%%%%%%
\subsection{Convergence rate} \label{Sec_convergence_experiment}

Section~\ref{Sec_Convergence} presents a rigorous mathematical analysis of the convergence properties of Algorithm \ref{alg:projgrad} and proves that it converges linearly under some assumptions. To validate these theoretical findings, we conducted numerical experiments for an individual.

As shown in Fig.~\ref{fig:vcs_aecs_convergence}, Algorithm \ref{alg:projgrad} exhibits linear convergence for both the VCS and AECS cases.
This is evidenced by the fact that when we plot
  $\log \left\| p^{(k)} - p^* \right\|$
on the vertical axis, the result is a straight line with a constant negative slope. In this context, $p^*$ is not the exact optimal solution but is approximated by $p^{(K)}$ obtained after a sufficiently large number of iterations $K$ such that $p^{(K)}$ can be regarded as the converged solution. This numerical experiment thereby validates the mathematical convergence analysis presented in Section~\ref{Sec_Convergence}.

\begin{figure}[tb]
    \centering
   \includegraphics[width=1\linewidth]{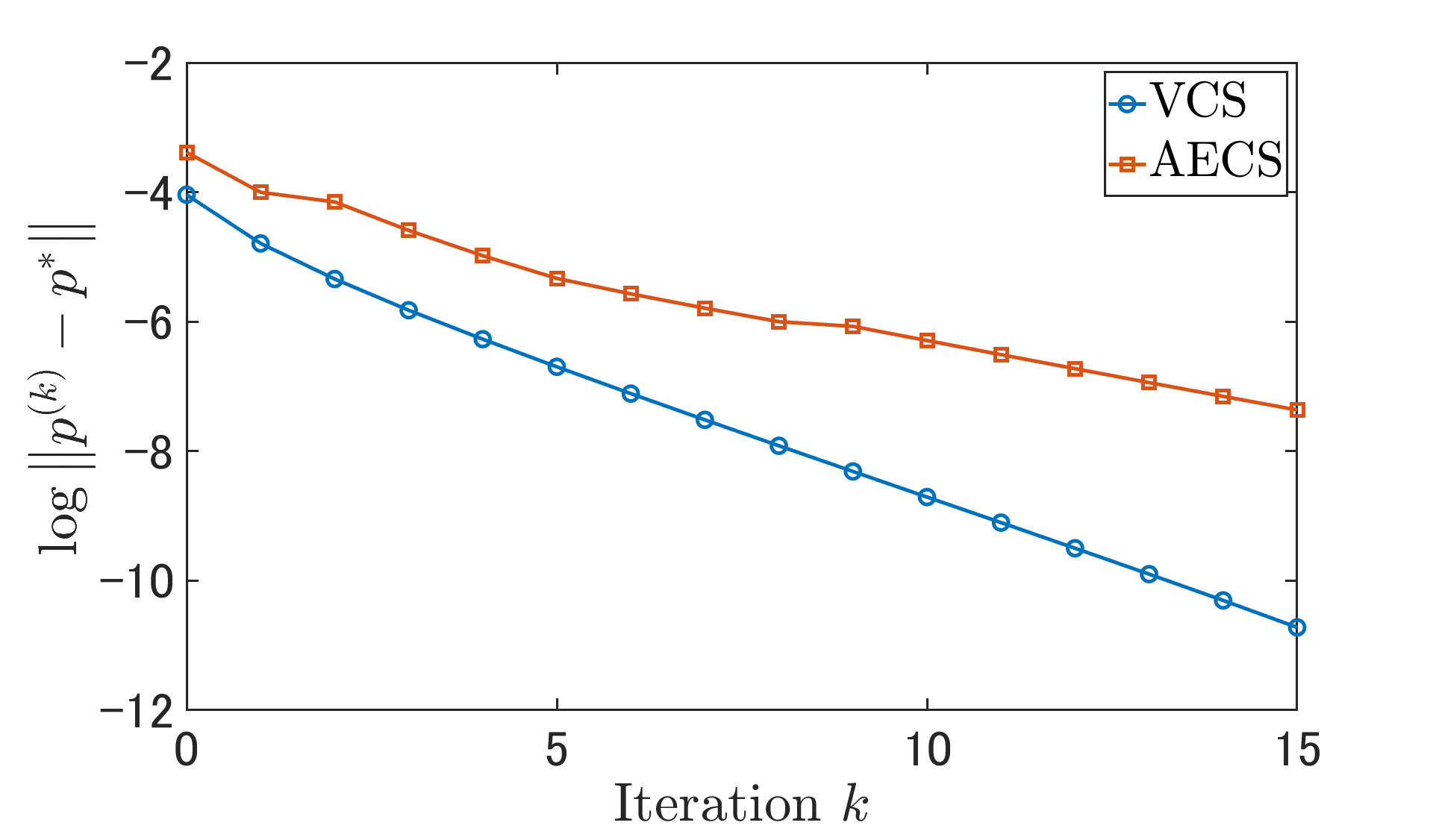}
    \caption{
    Convergence behavior of Algorithm \ref{alg:projgrad}.
    }
    \label{fig:vcs_aecs_convergence}
\end{figure}

%%%%%%%%%%%%%%%
\subsection{Diverse Approaches to Brain Network Centrality Analysis}

Tables~\ref{tab:metrics_nodes} and~\ref{tab:bottom5_metrics} provide a comprehensive summary of the top 5 and bottom 5 nodes across multiple centrality measures—including AECS, VCS, Indegree, Outdegree, Betweenness, PageRank, and Average Controllability—along with their corresponding brain regions. 
In table \ref{tab:metrics_nodes}, a check mark ($\checkmark$) in a given column indicates that the corresponding node is ranked among the top 5 according to that specific centrality metric. 
Similarly, table~\ref{tab:bottom5_metrics} uses a check mark ($\checkmark$) to denote nodes ranked among the bottom 5 for each respective metric. These tables serve as the foundation for our analysis, highlighting regions that act as key hubs or serve more peripheral, specialized roles within the brain network.  A detailed examination of these results allows us to appreciate the multidimensional nature of neural connectivity and the distinct information captured by each metric.

According to Figs. \ref{fig:AECS_T100} and \ref{fig:VCS_T100}, our findings reveal that the AECS and VCS metrics yield markedly different hub profiles. The brain regions corresponding to the AECS top 5 nodes and the VCS bottom 5 nodes are primarily involved in higher cognitive functions and motor control \cite{achiron2013superior, boisgueheneuc2006functions, cavanna2006precuneus, hu2016right, yu2013enhanced}. 
Conversely, the brain regions corresponding to the AECS bottom 5 nodes and the VCS top 5 nodes are predominantly linked to sensory processing and emotional regulation \cite{hartwigsen2019functional, koelsch2014brain, ledoux2003emotional, soudry2011olfactory}. 

Moreover, when applying traditional graph-theoretical metrics such as Indegree, Outdegree, Betweenness, and PageRank \cite{bloch2023centrality, rodrigues2019network} to the matrix $\mathcal{A}^{(i)}$ $(i=1,\ldots, 88)$, regions like the Precuneus, Putamen, and Thalamus consistently rank highly. Each of these measures assesses a different aspect of centrality: Indegree and Outdegree reflect local connectivity; Betweenness quantifies the role of a node in facilitating global information flow; and PageRank indicates a node's overall influence within the network. The recurrent prominence of these regions across various metrics underscores their vital role as integrative hubs that mediate and disseminate information across the brain.

The fact that VCS and Average Controllability yield very similar top 5 nodes but markedly different bottom 5 nodes suggests that these two metrics converge in identifying the most influential or ``hub" regions of the network, yet diverge in their characterization of the less central regions. 
Here, for each individual $i\in \{1,\ldots, 88\}$ and for each brain region $j\in \{1,\ldots, 90\}$, Average Controllability (introduced in \cite{summers2015submodularity}) is defined as 
\begin{align*}
    {\rm tr}(\mathcal{W}^{(i)}_{j}),
\end{align*}
where 
\begin{align*}
\mathcal{W}^{(i)}_{j} := \int_0^{T} \exp (-\mathcal{L}^{(i)}t)e_je_j^\top \exp (-(\mathcal{L}^{(i)})^\top t)\, {\rm d}t,
\end{align*}
which corresponds to the finite-time controllability Gramian of the system with only one input
\begin{align}
    \dot{x}^{(i)}(t) = -\mathcal{L}^{(i)}x^{(i)}(t) + e_j u(t). \label{eq_brain_input}
\end{align}
This means that the external input is applied only to the $j$-th brain region of the $i$-th individual, and no input is applied to other regions.

Table~\ref{Table:correlation} summarizes the average correlation coefficients and standard deviations between the AECS/VCS metrics and various node-level network measures—including Indegree, Outdegree, Betweenness, PageRank, and Average Controllability—across $88$ individuals. Notably, the AECS metric exhibits moderately strong positive correlations with the traditional centrality measures, with average correlation coefficients ranging from approximately $0.64$ (for Indegree and Outdegree) to $0.68$ (for Betweenness and PageRank). In contrast, AECS shows a moderately strong negative correlation (average coefficient of $-0.60$) with Average Controllability. Conversely, the VCS metric demonstrates moderate negative correlations with Indegree, Outdegree, Betweenness, and PageRank (coefficients around $-0.26$ to $-0.30$), while it is very strongly positively correlated with Average Controllability (average coefficient of $0.84$). These findings underscore the complementary roles of AECS and VCS.

\begin{table*}[t]
    \centering
    \caption{Top 5 Nodes and Their Corresponding Brain Regions for AECS, VCS, Indegree, Outdegree, Betweenness, PageRank, and Average Controllability.}
    \begin{tabular}{ccccccccc}
        \toprule
        Node Index & Brain Region & AECS & VCS & Indegree & Outdegree & Betweenness & PageRank & Average Controllability \\
        \midrule
         3  & Left Superior Frontal Gyrus   & $\checkmark$ &             &              &              &              &             &  \\
         4  & Right Superior Frontal Gyrus  & $\checkmark$ &             &              &              &              &             &  \\
        22  & Right Olfactory Cortex        &             & $\checkmark$ &              &              &              &             & $\checkmark$ \\
        41  & Left Amygdala                 &             & $\checkmark$ &              &              &              &             & $\checkmark$ \\
        42  & Right Amygdala                &             & $\checkmark$ &              &              &              &             & $\checkmark$ \\
        67  & Left Precuneus                &             &             & $\checkmark$ & $\checkmark$ & $\checkmark$ & $\checkmark$ &  \\
        68  & Right Precuneus               & $\checkmark$ &             & $\checkmark$ &              &              & $\checkmark$ &  \\
        73  & Left Putamen                  & $\checkmark$ &             &              & $\checkmark$ & $\checkmark$ &             &  \\
        74  & Right Putamen                 & $\checkmark$ &             & $\checkmark$ & $\checkmark$ & $\checkmark$ & $\checkmark$ &  \\
        77  & Left Thalamus                 &             &             & $\checkmark$ & $\checkmark$ & $\checkmark$ & $\checkmark$ &  \\
        78  & Right Thalamus                &             &             & $\checkmark$ & $\checkmark$ & $\checkmark$ & $\checkmark$ &  \\
        79  & Left Heschl's Gyrus           &             & $\checkmark$ &              &              &              &             & $\checkmark$ \\
        80  & Right Heschl's Gyrus          &             & $\checkmark$ &              &              &              &             &  \\
        87  &  Left Medial Temporal Pole                             &             &             &              &              &              &             & $\checkmark$ \\
        \bottomrule
    \end{tabular}
    \label{tab:metrics_nodes}
\end{table*}

\begin{table*}[t]
    \centering
    \caption{Bottom 5 Nodes and Their Corresponding Brain Regions for AECS, VCS, Indegree, Outdegree, Betweenness, PageRank, and Average Controllability.}
    \begin{tabular}{ccccccccc}
        \toprule
        Node Index & Brain Region & AECS & VCS & Indegree & Outdegree & Betweenness & PageRank & Average Controllability \\
        \midrule
         4  &   Right Superior Frontal Gyrus                      &       &       &          &           &           &         & $\checkmark$ \\
        16  & Right IFG (p. Orbitalis) & $\checkmark$ &       &          &           &           &         &  \\
        21  & Left Olfactory Cortex    &       &       & $\checkmark$ & $\checkmark$ &       & $\checkmark$ &  \\
        22  & Right Olfactory Cortex   & $\checkmark$ &       & $\checkmark$ & $\checkmark$ & $\checkmark$ & $\checkmark$ &  \\
        27  & Left Rectal Gyrus        &       &       & $\checkmark$ &           &           & $\checkmark$ &  \\
        28  & Right Rectal Gyrus       &       &       & $\checkmark$ & $\checkmark$ &         & $\checkmark$ &  \\
        40  & Right ParaHippocampal Gyrus &     &       &          & $\checkmark$ & $\checkmark$ &         &  \\
        41  & Left Amygdala            & $\checkmark$ &       & $\checkmark$ &           & $\checkmark$ & $\checkmark$ &  \\
        51  &   Left Middle Occipital Gyrus                    &       &       &          &           &           &         & $\checkmark$ \\
        54  & Right Inferior Occipital Gyrus & &       &          & $\checkmark$ & $\checkmark$ &         &  \\
        67  &  Left Precuneus                       &       &       &          &           &           &         & $\checkmark$ \\
        68  &   Right Precuneus                      &       &       &          &           &           &         & $\checkmark$ \\
        70  & Paracentral Lobule       &       &       &          &           & $\checkmark$ &         &  \\
        73  & Left Putamen             &       & $\checkmark$ &        &           &           &         &  \\
        74  & Right Putamen            &       & $\checkmark$ &        &           &           &         &  \\
        79  & Left Heschl's Gyrus      & $\checkmark$ &       &          &           &           &         &  \\
        81  & Left Superior Temporal Gyrus &   & $\checkmark$ &       &           &           &         &  \\
        82  & Right Superior Temporal Gyrus &  & $\checkmark$ &       &           &           &         &  \\
        83  & Left Temporal Pole       &       & $\checkmark$ &        &           &           &         &  \\
        85  &  Left Middle Temporal Gyrus                       &       &       &          &           &           &         & $\checkmark$ \\
        87  & Left Medial Temporal Pole & $\checkmark$ &       &        &           &           &         &  \\
        \bottomrule
    \end{tabular}
    \label{tab:bottom5_metrics}
\end{table*}

In summary, we identified tendencies and correlations of AECS and VCS, as summarized in Table \ref{table:summary}.

\begin{table}[t]
    \centering
    \caption{Average and Standard Deviation of Correlation Coefficients between AECS/VCS and Node-Level Network Metrics (Indegree, Outdegree, Betweenness, PageRank, and Average Controllability) across 88 Individuals.}
    \label{Table:correlation}
    \begin{tabular}{|c|c|c|}
        \hline
        & \textbf{Average} & \textbf{Standard Deviation}  \\
        \hline
        \textbf{AECS and Indegree} & $0.64372$ & $0.047636$ \\
        \hline
        \textbf{AECS and Outdegree} & $0.64043$ & $0.046415$  \\
        \hline
        \textbf{AECS and Betweenness} & $0.67655$ & $0.055664$ \\
        \hline
        \textbf{AECS and PageRank} & $0.64493$ & $0.048493$  \\
        \hline
        \textbf{AECS and Ave. Con.} & $-0.6036$ & $0.067408$  \\
        \hline
        \textbf{VCS and Indegree} & $-0.29601$ & $0.076079$  \\
        \hline
        \textbf{VCS and Outdegree} & $-0.26841$ & $0.075018$  \\
        \hline
        \textbf{VCS and Betweenness} & $-0.25723$ & $0.057868$  \\
        \hline
        \textbf{VCS and PageRank} & $-0.29478$ & $0.075393$  \\
         \hline
        \textbf{VCS and Ave. Con.} & $0.83696$ & $0.049695$  \\
        \hline
    \end{tabular}
\end{table}

%%%%%%%%%%%%%%%%
\subsection{Clinical Implications of Novel Brain Network Metrics}

\subsubsection{VCS and Sensory-Affective Integration: Novel Insights and Clinical Implications}

Our analysis using the VCS metric in healthy individuals reveals that regions such as Heschl's gyrus, the amygdala, and the right olfactory cortex exhibit high centrality, underscoring their essential role in maintaining the integrity of sensory processing networks \cite{ledoux2003emotional, koelsch2014brain}. Because previous studies have shown that sensory processing regions exhibit altered network topology in autism spectrum disorders \cite{rudie2013altered, keown2017network}, using VCS may help identify deviations from the normal centrality patterns seen in healthy individuals.
In other words, because VCS assigns high values to these key sensory-affective regions in healthy brains, applying this metric in clinical populations could reveal abnormal centrality patterns, thereby enhancing our understanding of their neurophysiological underpinnings and informing targeted interventions.

This observation aligns well with established neurophysiological principles that emphasize the central role of sensory cortices in processing external stimuli and the importance of emotional evaluation in generating appropriate behavioral responses (e.g., \cite{ledoux2000emotion, pessoa2008relationship}).
Notably, VCS has the potential to yield novel insights because it diverges significantly from traditional centrality measures. While conventional metrics primarily capture aspects of network integration and information transfer, VCS is more closely related to network controllability. This unique perspective suggests that VCS may uncover previously overlooked patterns in brain network organization, paving the way for new discoveries in understanding sensory processing and its related disorders.

\subsubsection{AECS in Healthy Subjects: Implications for Parkinson's Disease and Beyond}

In healthy individuals, the high AECS scores observed in regions such as the Putamen \cite{yu2013enhanced}, Superior Frontal Gyrus \cite{boisgueheneuc2006functions, hu2016right}, and Precuneus \cite{cavanna2006precuneus} indicate that these areas play central roles in the brain's network. For example, the Putamen is primarily involved in motor control, the Superior Frontal Gyrus in higher-order cognitive control, and the Precuneus in spatial awareness and integrated information processing.

These findings can be related to studies on motor dysfunction in neurodegenerative diseases like Parkinson's disease. In Parkinson's disease, disruptions in motor circuits---particularly in the Putamen \cite{yu2013enhanced}---are directly associated with motor symptoms. In response, cognitive control regions such as the Superior Frontal Gyrus may become more active as a compensatory mechanism to help manage these deficits.

Therefore, using the AECS metric to quantify the centrality of these regions in healthy subjects provides a baseline against which changes in network structure and compensatory mechanisms in neurodegenerative conditions can be assessed. This approach holds promise for the development of new diagnostic markers, the assessment of disease progression, and the monitoring of treatment efficacy.

\subsection{Further Discussions}

In this section, we focus on the Right Superior Frontal Gyrus and the Left Heschl’s Gyrus, as these regions were identified as the most important by AECS and VCS, respectively, and are thus selected for further detailed analysis.

Fig.~\ref{fig:eigenvalues} illustrates the first $50$ (out of $90$) eigenvalues of the controllability Gramian 
$W^{(16)}_4$ for the Right Superior Frontal Gyrus (blue circles) and  $W^{(16)}_{79}$ for
 the Left Heschl’s Gyrus (orange squares). In both cases, the eigenvalues decay rapidly, reaching extremely small magnitudes (on the order of 
$10^{-15}$ to $10^{-20}$) by approximately the $20$th eigenvalue, indicating that many directions in the state space are only weakly controllable.
Such extremely small values are most likely due to round-off errors inherent in finite-precision arithmetic rather than representing genuinely nonzero eigenvalues. Therefore, we interpret these eigenvalues as effectively zero. This interpretation is consistent with the expected theoretical properties of the matrix and is a common occurrence in numerical computations.

Moreover, Fig.~\ref{fig:eigenvalues} indicates that Volumetric Control Energy (VCE) and Average Control Energy (ACE) centralities introduced in \cite{summers2015submodularity} do not fully capture the importance of each brain region from the viewpoint of controllability, as defined below: \begin{itemize}
    \item 
    For each individual $i\in \{1,\ldots, 88\}$ and for each brain region $j\in \{1,\ldots, 90\}$,
VCE centrality has been defined as
\begin{align}
\sum_{s=1}^{k_j^{(i)}}\log \lambda_s(\mathcal{W}^{(i)}_j), \label{eq:vce}
\end{align}
where $k_j^{(i)}:={\rm rank}\,\mathcal{W}^{(i)}_j$, and $\lambda_1(\mathcal{W}^{(i)}_j),\ldots, \lambda_{k_j^{(i)}}(\mathcal{W}^{(i)}_j)$ denote the eigenvalues of $\mathcal{W}^{(i)}_j$ satisfying $\lambda_1(\mathcal{W}^{(i)}_j)\geq \cdots \geq \lambda_{k_j^{(i)}}(\mathcal{W}^{(i)}_j)>0$.
Here,
$k_j^{(i)}$ means the controllable subspace dimension of system \eqref{eq_brain_input}.
Since the VCE value is related to the volume 
of the ellipsoid spanned by the reachable subspace, a larger value indicates better controllability.

\item
    For each individual $i\in \{1,\ldots, 88\}$ and for each brain region $j\in \{1,\ldots, 90\}$,
ACE centrality 
 has been defined as
 \begin{align*}
     {\rm tr}\left((W_j^{(i)})^\dagger\right),
 \end{align*}
where $(W_j^{(i)})^\dagger$ is the pseudo inverse of $W_j^{(i)}$.
This quantity is related to the average minimum input energy required to steer the system onto the unit sphere of the reachable subspace, implying that lower ACE values indicate better controllability.
\end{itemize}

It has been pointed out in \cite{sato2022controllability} that VCE and ACE centralities may not adequately reflect the importance of each state node in terms of controllability for general settings---not limited to brain networks (see Remarks 2 and 3, and Section VI-B in \cite{sato2022controllability}).
This conclusion arises from the fact that these metrics are computed based on the controllability Gramian for a system with only one input. Moreover, related discussions can be found in \cite{tu2018warnings, pasqualetti2019re, suweis2019brain}. In fact, these works specifically address the scenario where a single input is applied to brain networks and demonstrate that the smallest eigenvalue of the controllability Gramian becomes extremely small. This observation implies that using VCE and ACE as centrality measures can be problematic in general, since the resulting metrics may not reliably capture node importance beyond the particular real data analyzed in our study.

\begin{figure}[tb]
    \centering
   \includegraphics[width=1\linewidth]{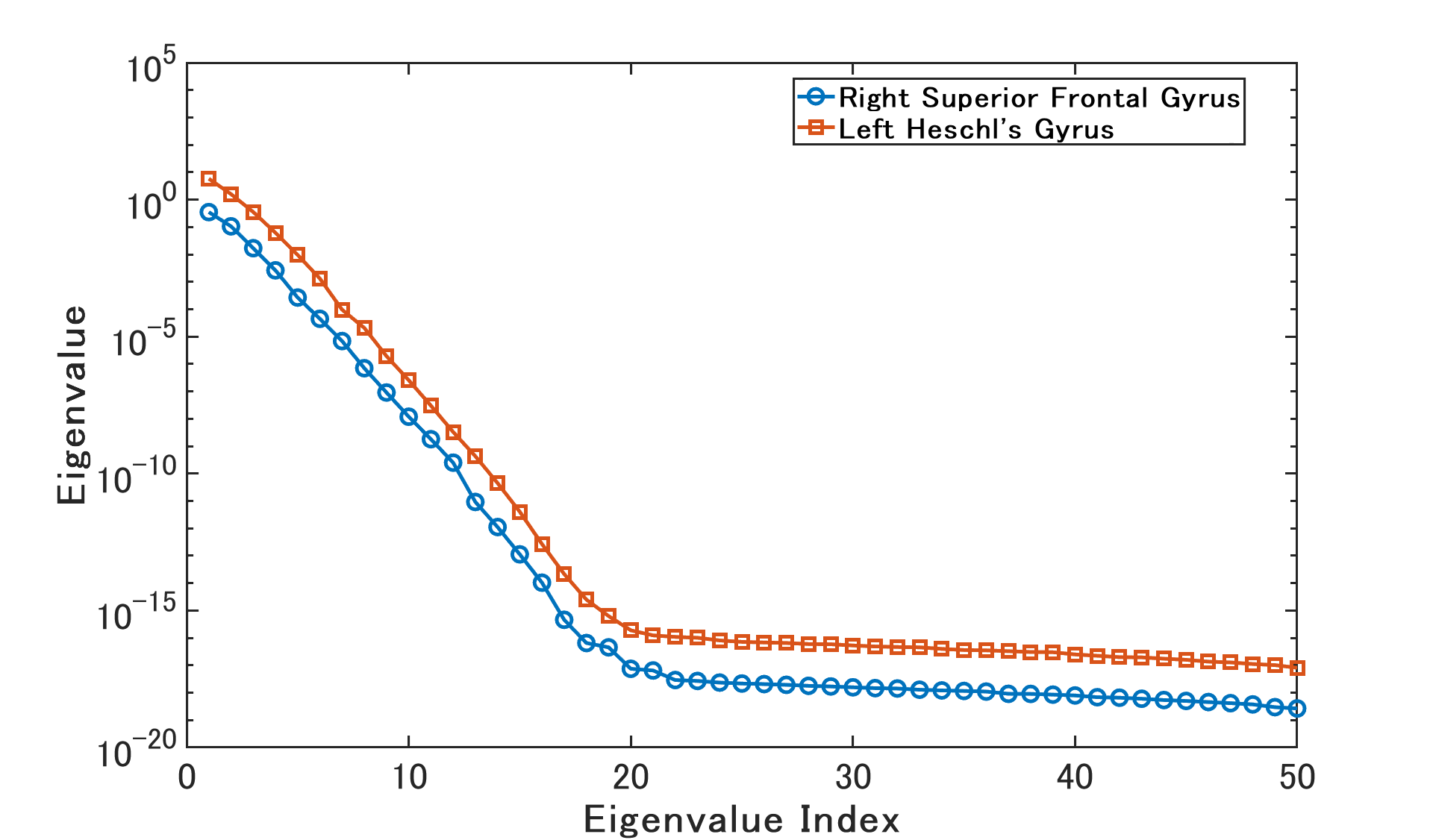}
    \caption{
    Comparison of the first $50$ eigenvalues of the controllability Gramian $W^{(16)}_4$ for the Right Superior Frontal Gyrus and  $W^{(16)}_{79}$ for the Left Heschl’s Gyrus, highlighting their rapid decay.
    }
    \label{fig:eigenvalues}
\end{figure}

%%%%%%%%%
\section{Concluding Remarks} \label{Sec_conclusion}

\subsection{Summary}
This paper has significantly advanced the understanding of the Volumetric Controllability Score (VCS) and Average Energy Controllability Score (AECS) within the framework of network systems control. We have established that, for any linear time-invariant system, VCS and AECS are unique 
for almost all specified time parameters $T$, thereby extending the applicability of these measures to a much broader class of systems. Moreover, we demonstrated that the underlying system yields distinct differences between VCS and AECS for symmetric matrices, while both measures coincide for skew-symmetric matrices.
In addition, we investigated the dependence of these scores on time parameter $T$
and proved that when $T$ is extremely small, both VCS and AECS become essentially uniform---a finding supported by numerical experiments. We also provided a detailed convergence analysis of the algorithm used for computing these measures, showing that under several assumptions it converges linearly, which was further validated through experiments.
Finally, our evaluation on brain networks modeled via Laplacian dynamics using real data revealed contrasting evaluation trends: VCS tends to assign higher values to regions associated with sensory processing and emotional regulation, whereas AECS favors regions linked to cognitive function and motor control. These differences, along with their correlations to traditional centrality measures, highlight the complementary perspectives of VCS and AECS in assessing network controllability.

\subsection{Discrepancies Between VCS and AECS: Open Questions}
 Our theoretical analyses and empirical studies on real-world human brain network data (as detailed in Table 1 and Section IV) have revealed significant differences between VCS and AECS.  
 However, the precise reasons behind the discrepancies remain unclear. 
In fact, this comparison hinges on the assumption that BOLD‐signal propagation can be approximated by the linear Laplacian system \eqref{ex_continuous}. While this simplification enabled tractable computation of VCS and AECS on high‐resolution connectomes, true BOLD dynamics emerge from nonlinear hemodynamic and neuronal processes (e.g., \cite{friston2000nonlinear, zeidman2019guide}). Consequently, the pronounced divergences we observed may partly reflect limitations of the Laplacian approximation rather than intrinsic differences between the scores.

\subsection{Extension to Time-Varying Networks}

 Our analysis framework is not limited to static networks but can also be generalized to networks with time-varying structures. Although the mathematical treatment of time-varying structures is inherently more challenging---necessitating additional assumptions for proving the uniqueness of the controllability scores and minor modifications to Algorithm \ref{alg:projgrad}---our additional research (see \cite{umezu2025controllability}) demonstrates that the core findings of this work remain applicable. Notably, numerical experiments in \cite{umezu2025controllability}
indicate that the controllability score may vary between time-varying and static networks.

However, we acknowledge that the application of our approach to real-world datasets with time-varying structures has not yet been performed, and this remains an important avenue for future research.

\subsection{Extension to Nonlinear Systems}
While our VCS and AECS are defined using the controllability Gramian for linear systems---which makes a direct extension to nonlinear systems nontrivial---they may still offer valuable insights in the nonlinear context. In many practical scenarios, such as brain networks that are inherently nonlinear, the system dynamics can be approximated by linear models near equilibrium points. Thus, VCS and AECS can be applied locally to capture the controllability properties of nonlinear systems around these operating points. This potential for local application provides a promising avenue for extending the utility of our approach to more general, nonlinear settings, which we plan to explore in future work.

\subsection{Generalizability to Diverse Network Domains}
In addition to their demonstrated utility in brain network analysis, our proposed metrics---VCS and AECS---hold promise for application in a variety of other networked systems. Future work could explore their use in domains such as social and infrastructure networks, thereby validating their generalizability. For example, in social networks, VCS and AECS could be employed to identify influential individuals or communities by quantifying the controllability and integration of nodes. In infrastructure networks, such as transportation or communication systems, these metrics could help assess the resilience and vulnerability of critical nodes, guiding strategies for optimization and intervention. Investigating these diverse domains may yield valuable insights into the universal principles underlying complex network dynamics, further demonstrating the broad applicability of our approach.

% Can use something like this to put references on a page
% by themselves when using endfloat and the captionsoff option.
%\ifCLASSOPTIONcaptionsoff
 % \newpage
%\fi

\section*{Acknowledgment}
This work was supported by the Japan Society for the Promotion of Science KAKENHI under Grant  23K03899. 

% trigger a \newpage just before the given reference
% number - used to balance the columns on the last page
% adjust value as needed - may need to be readjusted if
% the document is modified later
%\IEEEtriggeratref{8}
% The "triggered" command can be changed if desired:
%\IEEEtriggercmd{\enlargethispage{-5in}}

% references section

% can use a bibliography generated by BibTeX as a .bbl file
% BibTeX documentation can be easily obtained at:
% http://mirror.ctan.org/biblio/bibtex/contrib/doc/
% The IEEEtran BibTeX style support page is at:
% http://www.michaelshell.org/tex/ieeetran/bibtex/
\bibliographystyle{IEEEtran}
% argument is your BibTeX string definitions and bibliography database(s)
\bibliography{main.bib}

% <OR> manually copy in the resultant .bbl file
% set second argument of \begin to the number of references
% (used to reserve space for the reference number labels box)
%\begin{thebibliography}{1}

%\bibitem{IEEEhowto:kopka}
%H.~Kopka and P.~W. Daly, \emph{A Guide to \LaTeX}, 3rd~ed.\hskip 1em plus
  %0.5em minus 0.4em\relax Harlow, England: Addison-Wesley, 1999.

%\end{thebibliography}

% biography section
% 
% If you have an EPS/PDF photo (graphicx package needed) extra braces are
% needed around the contents of the optional argument to biography to prevent
% the LaTeX parser from getting confused when it sees the complicated
% \includegraphics command within an optional argument. (You could create
% your own custom macro containing the \includegraphics command to make things
% simpler here.)
%\begin{IEEEbiography}[{\includegraphics[width=1in,height=1.25in,clip,keepaspectratio]{mshell}}]{Michael Shell}
% or if you just want to reserve a space for a photo:

% You can push biographies down or up by placing
% a \vfill before or after them. The appropriate
% use of \vfill depends on what kind of text is
% on the last page and whether or not the columns
% are being equalized.

%\vfill

% Can be used to pull up biographies so that the bottom of the last one
% is flush with the other column.
%\enlargethispage{-5in}

% that's all folks
\end{document}